\documentclass[reqno,10pt,centertags]{amsart}
\usepackage{amsmath,amsthm,amscd,amssymb,latexsym,upref,stmaryrd}
\usepackage{hyperref}


\newcommand{\bbC}{{\mathbb{C}}}

\newcommand{\bbN}{{\mathbb{N}}}
\newcommand{\bbR}{{\mathbb{R}}}

\newcommand{\cB}{{\mathcal B}}

\newcommand{\cH}{{\mathcal H}}

\newcommand{\cU}{{\mathcal U}}

\newcommand{\cX}{{\mathcal X}}


\newcommand{\no}{\notag}
\newcommand{\lb}{\label}
\newcommand{\f}{\frac}

\newcommand{\ol}{\overline}

\newcommand{\Oh}{O}

\newcommand{\ran}{\text{\rm{ran}}}

\newcommand{\dom}{\text{\rm{dom}}}

\newcommand{\supp}{\text{\rm{supp}}}

\newcommand{\bi}{\bibitem}

\newcommand{\tr}{\text{\rm{tr}}}
\newcommand{\Sect}{\text{\rm Sect}}
\newcommand{\cl}{\text{\rm{cl}}}

\newcommand{\lbrac}{\left[\negthickspace}
\newcommand{\rbrac}{\negthickspace\right]}

\newcommand{\La}{\Lambda}

\newcommand{\ga}{\gamma}

\newcommand{\De}{\Delta}
\newcommand{\de}{\delta}
\newcommand{\te}{\theta}

\newcommand{\CR}{{\bbC\backslash\sigma(\Hte)}}

\newcommand{\gate}{\ensuremath{\ga_{\te_0,\te_R}}}
\newcommand{\gates}{\ensuremath{\ga_{\te_0^{\prime},\te_R^{\prime}}}}

\newcommand{\Lates}{\ensuremath{\La_{\te_0,\te_R}^{\te_0^{\prime},\te_R^{\prime}}}}

\newcommand{\Lazzqq}{\ensuremath{\La_{0,0}^{\frac{\pi}{2},\frac{\pi}{2}}}}

\newcommand{\Lades}{\ensuremath{\La_{\de_0,\de_R}^{\de_0^{\prime},\de_R^{\prime}}}}

\newcommand{\Ste}{\ensuremath{S_{\te_0,\te_R}}}

\newcommand{\Gte}{\ensuremath{G_{\te_0,\te_R}}}
\newcommand{\Hte}{\ensuremath{H_{\te_0,\te_R}}}


\renewcommand{\Re}{\text{\rm Re}}
\renewcommand{\Im}{\text{\rm Im}}
\renewcommand{\ln}{\text{\rm ln}}

\renewcommand{\le}{\leqslant}




\newtheorem{theorem}{Theorem}[section]

\newtheorem{lemma}[theorem]{Lemma}

\newtheorem{hypothesis}[theorem]{Hypothesis}
\theoremstyle{definition}
\newtheorem{definition}[theorem]{Definition}
\newtheorem{remark}[theorem]{Remark}

\allowdisplaybreaks \numberwithin{equation}{section}


\begin{document}

\title[Symmetrized Perturbation Determinants and Boundary Data Maps]
{Symmetrized Perturbation Determinants and Applications
to Boundary Data Maps and Krein-Type Resolvent
Formulas}

\author[F.\ Gesztesy and M.\ Zinchenko]{Fritz Gesztesy
and Maxim Zinchenko}

\address{Department of Mathematics,
University of Missouri, Columbia, MO 65211, USA}
\email{fritz@math.missouri.edu}
\urladdr{http://www.math.missouri.edu/personnel/faculty/gesztesyf.html}

\address{Department of Mathematics, Western Michigan University,
Kalamazoo, MI 49008, USA \newline
After August 1, 2010: Department of Mathematics,
University of Central Florida, Orlando, FL 32816, USA}
\email{maxim.zinchenko@wmich.edu}
\urladdr{http://www.z-max.info/}

\dedicatory{Dedicated to the memory of Pierre Duclos (1948--2010)}

\thanks{Based upon work partially supported by the US National Science
Foundation under Grant No.\ DMS-0965411.}


\subjclass[2000]{Primary: 34B05, 34B27, 34B40, 34L40;
Secondary: 34B20, 34L05, 47A10, 47E05.}
\keywords{(non-self-adjoint) Schr\"odinger operators on a compact interval, separated boundary conditions, boundary data maps, Robin-to-Robin maps,
Krein-type resolvent formulas, perturbation determinants, trace formulas.}


\date{\today}

\begin{abstract}
The aim of this paper is twofold: On one hand we discuss an abstract approach to symmetrized Fredholm perturbation determinants and an associated trace formula for a pair of operators of positive-type, extending a classical trace formula.

On the other hand, we continue a recent systematic study of boundary 
data maps in \cite{CGM10}, that is, $2 \times 2$ matrix-valued 
Dirichlet-to-Neumann and more generally, Robin-to-Robin maps, associated with one-dimensional Schr\"odinger operators on a compact interval $[0,R]$ with separated boundary conditions at $0$ and $R$. 
One of the principal new results in this paper reduces an appropriately symmetrized (Fredholm) perturbation determinant to the $2\times 2$ determinant of the underlying boundary data map. In addition, as a concrete application of the abstract approach in the first part of this 
paper, we establish the trace formula for resolvent differences of 
self-adjoint Schr\"odinger operators corresponding to different 
(separated) boundary conditions in terms of boundary data maps.
\end{abstract}

\maketitle

\section{Introduction}    \lb{s1}

In his joint 1983 paper  \cite{CDS83} with Jean-Michel Combes and Ruedi Seiler,
Pierre Duclos considered various one-dimensional Dirichlet and Neumann
Schr\"odinger operators and associated Krein-type resolvent formulas to study
the classical limit of discrete eigenvalues in a multiple-well potential. One of
the principal aims of the present paper is to consider related Krein-type
resolvent formulas for general separated boundary conditions on a compact
interval and establish connections with recently established boundary data
maps in \cite{CGM10}, perturbation determinants, and trace formulas. In
addition, we discuss an abstract approach to symmetrized (Fredholm)
perturbation determinants and an associated trace formula for a pair of operators
of positive-type, extending a classical trace formula for perturbation determinants
described by Gohberg and Krein \cite[Sect.\ IV.3]{GK69}.

In Section \ref{s7} we depart from our consideration of Schr\"odinger operators on
a compact interval and turn our attention to an abstract result on symmetrized
(Fredholm) determinants of the form
\begin{equation}
{\det}_{\cH}\Big(\ol{(A - z I_{\cH})^{1/2}(A_0 - z I_{\cH})^{-1}
	(A - z I_{\cH})^{1/2}}\Big)    \lb{1.7a}
\end{equation}
associated with a pair of operators $(A, A_0)$ of positive-type (and $z$ in
appropriate sectors of the complex plane). In particular, this
permits a discussion of sectorial (and hence non-self-adjoint) operators. It also
naturally permits a study of self-adjoint operators $(A, A_0)$, where $A$ is a small
form perturbation of $A_0$, extending the traditional case in which $A$ is a small
(Kato--Rellich-type) operator perturbation of $A_0$. Our principal result in
Section \ref{s7} then concerns a proof of the trace formula
\begin{align}
\begin{split}
& - \f{d}{dz} \ln\Big({\det}_{\cH}\Big(\ol{(A - z I_{\cH})^{1/2}(A_0 - z I_{\cH})^{-1}
	(A - z I_{\cH})^{1/2}}\Big)\Big)   \\
& \quad = {\tr}_{\cH}\big((A - z I_{\cH})^{-1} - (A_0 - z I_{\cH})^{-1}\big),   \lb{1.7c}
\end{split}
\end{align}
\noindent
an extension of the well-known operator perturbation case in which the symmetrized
expression
\begin{equation}
\ol{(A - z I_{\cH})^{1/2}(A_0 - z I_{\cH})^{-1} (A - z I_{\cH})^{1/2}}
\end{equation}
is replaced by the traditional expression
\begin{equation}
(A - z I_{\cH}) (A_0 - z I_{\cH})^{-1}
\end{equation}
on the left-hand side of \eqref{1.7c} (cf.\ Gohberg and Krein \cite[Sect.\ IV.3]{GK69}). The generalized trace formula \eqref{1.7c} appears to be without precedent under our general hypothesis that $A$ and $A_0$ are operators of positive-type and hence seems to be of independent interest.

Returning to the second principal aim of this paper, the discussion of boundary data maps for Schr\"odinger operators on a compact interval with separated boundary conditions, let $R>0$, introduce the strip
$S_{2 \pi}=\{z\in\bbC\,|\, 0\leq \Re(z) < 2 \pi\}$, and consider the boundary trace map
\begin{equation} \lb{1.1}
\gate \colon \begin{cases}
C^1({[0,R]}) \rightarrow \bbC^2, \\
u \mapsto \begin{bmatrix} \cos(\te_0)u(0) + \sin(\te_0)u'(0)\\
\cos(\te_R)u(R) - \sin(\te_R)u'(R) \end{bmatrix}, \end{cases}
\quad  \te_0, \te_R\in S_{2 \pi},
\end{equation}
where ``prime'' denotes $d/dx$. In addition, assuming that
\begin{equation}
V\in L^1((0,R); dx)    \lb{1.2}
\end{equation}
($V$ is not assumed to be real-valued in Sections \ref{s1} and \ref{s2}),
one can introduce the family of one-dimensional Schr\"odinger operators
$\Hte$ in $L^2((0,R); dx)$ by
\begin{align}
& \Hte f= -f'' + Vf,  \quad \te_0, \te_R\in S_{2 \pi},     \no   \\
& f\in\dom(\Hte)=\big\{ g \in L^2((0,R); dx)\,\big|\, g, g'\in AC({[0,R]}); \, \gate (g)=0;  \label{1.3} \\
& \hspace*{6.75cm} (-g''+Vg)\in L^2((0,R); dx)\big\},   \no
\end{align}
where $AC([0,R])$ denotes the set of absolutely continuous functions on $[0,R]$.

Assuming that $z\in\CR$ (with $\sigma(T)$ denoting the spectrum of $T$)
and $\te_0, \te_R \in S_{2 \pi}$, we recall that the boundary value problem given by
\begin{align}
-&u'' + Vu=zu,\quad u, u'\in AC([0,R]),   \label{1.4}\\
&\gate(u)=\begin{bmatrix}c_0\\ c_R \end{bmatrix}\in\bbC^2,    \label{1.5}
\end{align}
has a unique solution denoted by
$u(z,\cdot)=u(z,\cdot\,;(\te_0,c_0),(\te_R,c_R))$ for each $c_0, c_R\in\bbC$. To each boundary value problem \eqref{1.4}, \eqref{1.5}, we now associate a
family of \emph{general boundary data maps},
$\Lates (z) : \bbC^2 \rightarrow \bbC^2$, for
$\te_0, \te_R, \te_0^{\prime},\te_R^{\prime}\in S_{2 \pi}$,  where
\begin{align}\label{1.6}
\begin{split}
\Lates (z) \begin{bmatrix}c_0\\ c_R \end{bmatrix} &=
\Lates (z) \big(\gate(u(z,\cdot\ ;(\te_0,c_0),(\te_R,c_R)))\big)   \\
&= \gates(u(z,\cdot\ ;(\te_0,c_0),(\te_R,c_R))).
\end{split}
\end{align}
With $u(z,\cdot)=u(z,\cdot\ ;(\te_0,c_0),(\te_R,c_R))$, $\Lates (z) $ can be represented as a $2\times 2$ complex matrix, where
\begin{align}\label{1.7}
\Lates (z) \begin{bmatrix}c_0\\ c_R \end{bmatrix} &=
\Lates (z) \begin{bmatrix} \cos(\te_0)u(z,0) + \sin(\te_0)u'(z,0)\\[1mm]
\cos(\te_R)u(z,R) - \sin(\te_R)u'(z,R) \end{bmatrix}  \no \\
&=
\begin{bmatrix} \cos(\te_0^{\prime})u(z,0) + \sin(\te_0^{\prime})u'(z,0)\\[1mm]
\cos(\te_R^{\prime})u(z,R) - \sin(\te_R^{\prime})u'(z,R) \end{bmatrix}.
\end{align}

The map $\Lates (z)$, $z\in\CR$, was the principal object studied in the
recent paper \cite{CGM10}.

In Section \ref{s2} we recall the principal results of \cite{CGM10} most relevant to
the present investigation. More precisely, we review the basic properties of $\Lates (z)$,
and detail the explicit representation of the boundary data maps $\Lates (z)$ in
terms of the resolvent of the underlying Schr\"odinger operator $\Hte$. We discuss the associated boundary trace maps, associated linear fractional transformations relating
the boundary data maps $\Lates (z)$ and $\Lades(z)$ and mention the fact that
$\Lates (\cdot)$ is a matrix-valued Herglotz function (i.e., analytic on $\bbC_+$,
the open complex upper half-plane, with a nonnegative imaginary part) in the special case where $\Hte$ is self-adjoint. We conclude our review of \cite{CGM10} with
Krein-type resolvent formulas explicitely relating the resolvents of $\Hte$ and
$H_{\theta_0',\theta_R'}$.

In Section \ref{s8}, we focus on the second group of new results in this paper and
relate $\Lates (z)$ with the trace formula for the difference of resolvents of $\Hte$
and $H_{\theta_0',\theta_R'}$ and the underlying perturbation determinants. In this
context we will be assuming self-adjointness of $\Hte$ and
$H_{\theta_0',\theta_R'}$. More precisely, we will prove the following facts:
\begin{align}
& {\det}_{L^2((0,R); dx)}\Big(\ol{(H_{\theta_0',\theta_R'} - z I)^{1/2}
(\Hte - z I)^{-1} (H_{\theta_0',\theta_R'} - z I)^{1/2}}\Big)    \no \\
& \quad = \f{\sin(\theta_0) \sin(\theta_R)}{\sin(\theta_0') \sin(\theta_R')} \,
{\det}_{\bbC^2} \Big(\Lates (z)\Big),      \lb{1.8A} \\
& \qquad \;\, \te_0, \te_R \in [0, 2 \pi), \; \te_0', \te_R' \in (0, 2 \pi)\backslash\{\pi\}, \;
z\in\rho(\Hte),     \no
\end{align}
and
\begin{align}
\begin{split}
& \tr_{L^2((0,R); dx)}\big((H_{\theta_0',\theta_R'} -z I)^{-1}
- (\Hte -z I)^{-1}\big)    \\
& \quad = - \f{d}{dz} \ln\Big({\det}_{\bbC^2}\Big(\Lates (z)\Big)\Big), \quad
z\in\rho(\Hte) \cap \rho(H_{\theta_0',\theta_R'}).    \lb{1.8}
\end{split}
\end{align}

For classical as well as recent fundamental  literature on Weyl--Titchmarsh
operators (i.e., spectral parameter dependent
Dirichlet-to-Neumann maps, or more generally, Robin-to-Robin maps, resp.,
Poincar\'e--Steklov operators), relevant in the context
of boundary value spaces (boundary triples, etc.), we refer, for instance, to
\cite{AB09}-- \cite{DM95}, \cite{GM08}--\cite{GMZ07}, \cite{Gr08},
\cite[Ch.\ 13]{Gr09}, \cite{Po04}--\cite{PR09}, \cite{Ry07}, \cite{Ry09}, and
especially, to the extensive bibliography in \cite{CGM10}.

Finally, we briefly summarize some of the notation used in this paper: Let $\cH$ be
a separable complex Hilbert space, $(\cdot,\cdot)_{\cH}$ the scalar product in $\cH$
(linear in the second argument), and $I_{\cH}$ the identity operator in $\cH$.
Next, let $T$ be a linear operator mapping (a subspace of) a
Banach space into another, with $\dom(T)$ and $\ker(T)$ denoting the
domain and kernel (i.e., null space) of $T$.
The closure of a closable operator $S$ is denoted by $\ol S$.
The spectrum essential spectrum, discrete spectrum, and resolvent set
of a closed linear operator in $\cH$ will be denoted by $\sigma(\cdot)$.
$\sigma_{\rm ess}(\cdot)$, $\sigma_{\rm d}(\cdot)$, and $\rho(\cdot)$, respectively.
The Banach space of bounded linear operators on $\cH$ is
denoted by $\cB(\cH)$, the analogous notation $\cB(\cX_1,\cX_2)$,
will be used for bounded operators between two Banach spaces $\cX_1$ and
$\cX_2$.
The Banach space of compact operators
defined on $\cH$ is denoted by $\cB_{\infty}(\cH)$ and the $\ell^p$-based trace
ideals are denoted by $\cB_p(\cH)$, $p \geq 1$. The Fredholm determinant for
trace class perturbations of the identity in $\cH$ is denoted by ${\det}_{\cH}(\cdot)$,
the trace for trace class operators in $\cH$ will be denoted by ${\tr}_{\cH}(\cdot)$.


\section{Symmetrized Perturbation Determinants and Trace Formulas: \\
An Abstract Approach}  \label{s7}

In this section we present our first group of new results, the connection between appropriate perturbation determinants and trace formulas in an abstract setting. Throughout this section, $\cH$ denotes a complex, separable Hilbert space with
inner product $(\cdot,\cdot)_{\cH}$, and $I_{\cH}$ represents the identity operator
in $\cH$. For basic facts on trace ideals and infinite determinants we refer, for
instance, to \cite{GGK97}--\cite{GK69}, \cite{Si77}, and \cite{Si05}.

We start with the following classical result:

\begin{theorem} [\cite{GK69}, p.\ 163] \lb{t7.1}
Let $T(\cdot)$ be analytic in the $\cB_1(\cH)$-norm on some open set
$\Omega \subseteq \bbC$. Then
${\det}_{\cH}(I_{\cH} + T(\cdot))$ is analytic in $\Omega$ and
\begin{align}
\begin{split}
\f{d}{dz} \ln({\det}_{\cH}(I_{\cH} + T(z)))
= {\tr}_{\cH}\big((I_{\cH} + T(z))^{-1} T'(z)\big),&      \lb{7.1} \\
z \in \big\{\zeta \in \Omega \,\big|\, (I_{\cH} + T(\zeta))^{-1} \in \cB(\cH)\big\}.&
\end{split}
\end{align}
\end{theorem}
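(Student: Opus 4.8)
The plan is to prove the analyticity of $\det_{\cH}(I_{\cH}+T(\cdot))$ and then establish the logarithmic derivative formula \eqref{7.1} by a combination of the multiplicativity of the Fredholm determinant and an approximation by finite-rank operators. First I would recall the elementary but crucial estimate relating the Fredholm determinant to the trace norm, namely that the map $S \mapsto \det_{\cH}(I_{\cH}+S)$ is Lipschitz on bounded subsets of $\cB_1(\cH)$; more precisely, $|\det_{\cH}(I_{\cH}+S_1) - \det_{\cH}(I_{\cH}+S_2)| \leq \|S_1 - S_2\|_{\cB_1(\cH)} \exp(1 + \|S_1\|_{\cB_1(\cH)} + \|S_2\|_{\cB_1(\cH)})$. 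Combined with the hypothesis that $T(\cdot)$ is $\cB_1(\cH)$-analytic on $\Omega$, this immediately gives continuity, and then analyticity of the scalar-valued function $z \mapsto \det_{\cH}(I_{\cH}+T(z))$ follows from Morera's theorem (using that the difference quotients converge in $\cB_1(\cH)$ and hence the scalar difference quotients converge).

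For the derivative formula, my approach is to exploit the multiplicative property of the determinant together with the resolvent. Fix $z_0$ in the set where $(I_{\cH}+T(z_0))^{-1} \in \cB(\cH)$. The idea is to write, for $z$ near $z_0$,
\begin{align}
\begin{split}
& I_{\cH} + T(z) \\
& \quad = (I_{\cH} + T(z_0))\big(I_{\cH} + (I_{\cH}+T(z_0))^{-1}(T(z)-T(z_0))\big),
\end{split}
\end{align}
so that by multiplicativity of $\det_{\cH}$,
\begin{align}
\begin{split}
& \det_{\cH}(I_{\cH}+T(z)) \\
& \quad = \det_{\cH}(I_{\cH}+T(z_0)) \, \det_{\cH}\big(I_{\cH} + (I_{\cH}+T(z_0))^{-1}(T(z)-T(z_0))\big).
\end{split}
\end{align}
Since $(I_{\cH}+T(z_0))^{-1}(T(z)-T(z_0)) \to 0$ in $\cB_1(\cH)$ as $z \to z_0$, the second factor stays near $1$ and the logarithmic derivative localizes. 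Taking $d/dz$ at $z=z_0$ of the logarithm then reduces the problem to computing $\frac{d}{dz}\det_{\cH}(I_{\cH}+S(z))\big|_{z=z_0}$ where $S(z_0)=0$ and $S'(z_0) = (I_{\cH}+T(z_0))^{-1} T'(z_0)$.

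The heart of the matter is therefore to show that if $S(\cdot)$ is $\cB_1(\cH)$-analytic with $S(z_0)=0$, then $\frac{d}{dz}\det_{\cH}(I_{\cH}+S(z))\big|_{z=z_0} = \tr_{\cH}(S'(z_0))$; substituting $S'(z_0)=(I_{\cH}+T(z_0))^{-1}T'(z_0)$ then yields \eqref{7.1}. To prove this infinitesimal identity I would use the expansion of the Fredholm determinant $\det_{\cH}(I_{\cH}+S) = 1 + \tr_{\cH}(S) + O(\|S\|_{\cB_1(\cH)}^2)$, which holds uniformly on bounded sets and follows from the standard series representation of the determinant in terms of the traces of exterior powers (the higher terms being bounded by $\|S\|_{\cB_1(\cH)}^k/k!$). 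Dividing by $z-z_0$ and letting $z \to z_0$, using $\|S(z)\|_{\cB_1(\cH)} = O(|z-z_0|)$, the quadratic remainder drops out and leaves $\tr_{\cH}(S'(z_0))$. The main obstacle I anticipate is not conceptual but technical: one must justify the interchange of the $\cB_1(\cH)$-analyticity of the operator-valued map with the scalar differentiation, and ensure the determinant expansion remainder estimate is genuinely $o(|z-z_0|)$; this hinges on the continuity of $\tr_{\cH}$ as a bounded linear functional on $\cB_1(\cH)$ together with the uniform bound on the second-order term. Once these estimates are in hand, the formula holds at the arbitrary base point $z_0$ and hence throughout the stated domain.
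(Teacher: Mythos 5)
The paper does not actually prove Theorem \ref{t7.1}: it is quoted verbatim as a classical result with the citation to Gohberg--Krein \cite[p.\ 163]{GK69}, so there is no in-paper argument to compare against. Your proof is correct and is essentially the standard one found in the cited literature: the factorization $I_{\cH}+T(z) = (I_{\cH}+T(z_0))\big(I_{\cH}+S(z)\big)$ with $S(z)=(I_{\cH}+T(z_0))^{-1}(T(z)-T(z_0))\in\cB_1(\cH)$, multiplicativity of ${\det}_{\cH}$, and the first-order expansion ${\det}_{\cH}(I_{\cH}+S)=1+{\tr}_{\cH}(S)+O\big(\|S\|_{\cB_1(\cH)}^2\big)$ (with the remainder bound $\sum_{k\geq 2}\|S\|_{\cB_1(\cH)}^k/k!$ from the exterior-power series) reduce everything to continuity of the trace functional, and the fact that invertibility of $I_{\cH}+T(z_0)$ forces ${\det}_{\cH}(I_{\cH}+T(z_0))\neq 0$ closes the logarithmic-derivative step.

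One small wrinkle worth tightening: your analyticity argument as phrased is circular-sounding. Mere continuity (the Lipschitz estimate) does not feed Morera, and ``the scalar difference quotients converge'' is exactly differentiability, which is what is to be proved; moreover your localization argument only yields differentiability at points where $I_{\cH}+T(z)$ is invertible, whereas the theorem asserts analyticity on all of $\Omega$, including the zeros of the determinant. The clean fix is already implicit in your own tools: since $\big|{\tr}_{\cH}\big(\Lambda^k(T(z))\big)\big|\leq \|T(z)\|_{\cB_1(\cH)}^k/k!$, the series ${\det}_{\cH}(I_{\cH}+T(z))=\sum_{k\geq 0}{\tr}_{\cH}\big(\Lambda^k(T(z))\big)$ converges locally uniformly on $\Omega$, each term is analytic (a bounded multilinear expression in the $\cB_1(\cH)$-analytic map $T(\cdot)$), and Weierstrass's theorem then gives analyticity everywhere on $\Omega$ --- equivalently, ${\det}_{\cH}$ is an entire function on the Banach space $\cB_1(\cH)$ and composition with the analytic map $T(\cdot)$ preserves analyticity. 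With that substitution for the Morera step, your argument is complete and matches the classical proof the paper points to.
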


Next, we recall a classical special case in connection with standard perturbation
determinants (cf.\ \cite[Ch.\ IV]{GK69}):

\begin{theorem} [\cite{GK69}, Sect.\ IV.3, \cite{Ku61}] \lb{t7.2}
Assume that $A$ and $ A_0$ are densely
defined, closed, linear operators in $\cH$ satisfying
\begin{align}
& \dom(A_0) \subseteq \dom(A),    \lb{7.2} \\
& (A - z I_{\cH}) \big[(A - z I_{\cH})^{-1} - (A_0 - z I_{\cH})^{-1}\big] \in \cB_1(\cH)
\, \text{ for some}     \no \\
& \hspace*{3.45cm} \text{$($and hence for all\,$)$ } \,
z \in \rho(A) \cap \rho(A_0)\big).
\lb{7.3}
\end{align}
Then
\begin{align}
& - \f{d}{dz} \ln\big({\det}_{\cH}\big((A - z I_{\cH})(A_0 - z I_{\cH})^{-1}\big)\big)
= {\tr}_{\cH}\big((A - z I_{\cH})^{-1} - (A_0 - z I_{\cH})^{-1}\big),   \no \\
& \hspace*{8.5cm}   z \in \rho(A) \cap \rho(A_0).     \lb{7.4}
\end{align}
\end{theorem}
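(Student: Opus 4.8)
The plan is to reduce the symmetrized perturbation determinant identity \eqref{7.4} to the general analytic result of Theorem \ref{t7.1}. First I would fix $z_0 \in \rho(A) \cap \rho(A_0)$ and set $T(z) = (A - z I_{\cH})(A_0 - z I_{\cH})^{-1} - I_{\cH}$, so that $(A - z I_{\cH})(A_0 - z I_{\cH})^{-1} = I_{\cH} + T(z)$. Using the resolvent identity one rewrites
\begin{equation}
T(z) = (A - z I_{\cH})\big[(A_0 - z I_{\cH})^{-1} - (A - z I_{\cH})^{-1}\big],
\end{equation}
so that the hypothesis \eqref{7.3} is precisely the assertion that $T(z) \in \cB_1(\cH)$ (the sign is immaterial for the trace-class property). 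The first real task is therefore to verify that $z \mapsto T(z)$ is analytic in the $\cB_1(\cH)$-norm on $\rho(A) \cap \rho(A_0)$, which lets us invoke Theorem \ref{t7.1}; this follows from the fact that resolvents are analytic in the operator norm together with the first-resolvent-equation representation above, combined with the observation that \eqref{7.3} holds for one (and hence all) $z$ by a standard trace-ideal perturbation argument.

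Once analyticity is in hand, Theorem \ref{t7.1} gives
\begin{equation}
\f{d}{dz} \ln\big({\det}_{\cH}(I_{\cH} + T(z))\big)
= {\tr}_{\cH}\big((I_{\cH} + T(z))^{-1} T'(z)\big),
\end{equation}
valid wherever $(I_{\cH} + T(z))^{-1} = (A_0 - z I_{\cH})(A - z I_{\cH})^{-1} \in \cB(\cH)$, i.e. on all of $\rho(A) \cap \rho(A_0)$. The remaining work is purely computational: I would differentiate $T(z) = (A - z I_{\cH})(A_0 - z I_{\cH})^{-1}$ in $z$, using that $\f{d}{dz}(A_0 - z I_{\cH})^{-1} = (A_0 - z I_{\cH})^{-2}$ and that $\f{d}{dz}(A - z I_{\cH})$ contributes a $-I_{\cH}$ factor, to obtain an explicit expression for $T'(z)$, and then left-multiply by $(I_{\cH} + T(z))^{-1} = (A_0 - z I_{\cH})(A - z I_{\cH})^{-1}$.

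The key identity to verify is that, after this multiplication and cancellation, one arrives at
\begin{equation}
(I_{\cH} + T(z))^{-1} T'(z) = (A_0 - z I_{\cH})^{-1} - (A - z I_{\cH})^{-1},
\end{equation}
whose trace is exactly the negative of the right-hand side of \eqref{7.4}, accounting for the overall minus sign in the statement. The main obstacle I anticipate is not the algebra but the domain bookkeeping: the products $(A - z I_{\cH})(A_0 - z I_{\cH})^{-1}$ and their derivatives involve unbounded operators, so one must justify that each intermediate expression is well-defined on the appropriate domain (using \eqref{7.2}) and that the formal manipulations preserve the trace-class property and commute with $\tr_{\cH}$. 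I would handle this by working throughout with the bounded operators $(A - z I_{\cH})^{-1}$ and $(A_0 - z I_{\cH})^{-1}$ and their differences, reducing every step to the cyclicity and continuity of the trace on $\cB_1(\cH)$, so that no genuinely unbounded computation is ever performed outside the protection of a resolvent.
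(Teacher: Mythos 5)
You follow the paper's own route---rewriting $(A - z I_{\cH})(A_0 - z I_{\cH})^{-1} = I_{\cH} + T(z)$ with $T(z) = (A - A_0)(A_0 - z I_{\cH})^{-1} \in \cB_1(\cH)$, establishing $\cB_1(\cH)$-analyticity, and invoking Theorem \ref{t7.1}---and those parts are sound. The genuine flaw is your ``key identity'': it is false as an operator identity. With $(I_{\cH}+T(z))^{-1} = (A_0 - z I_{\cH})(A - z I_{\cH})^{-1}$ and $T'(z) = (A - A_0)(A_0 - z I_{\cH})^{-2}$, the advertised multiplication and cancellation actually yield
\begin{align*}
(I_{\cH}+T(z))^{-1} T'(z)
&= (A_0 - z I_{\cH})^{-1} - (A_0 - z I_{\cH})(A - z I_{\cH})^{-1}(A_0 - z I_{\cH})^{-1}  \\
&= (A_0 - z I_{\cH})\big[(A_0 - z I_{\cH})^{-1} - (A - z I_{\cH})^{-1}\big](A_0 - z I_{\cH})^{-1},
\end{align*}
and this equals $(A_0 - z I_{\cH})^{-1} - (A - z I_{\cH})^{-1}$ only if the two resolvents commute, which they do not in general. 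Nor can you fall back on taking the trace of each term separately: under \eqref{7.3} neither resolvent is individually trace class; only their difference is.

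What is true is an equality of \emph{traces}, and proving it requires exactly the cyclicity step you only gesture at in your last sentence. Set $S_1 = (A_0 - z I_{\cH})\big[(A_0 - z I_{\cH})^{-1} - (A - z I_{\cH})^{-1}\big]$ and $S_2 = (A_0 - z I_{\cH})^{-1}$. One has $S_1 \in \cB(\cH)$ by the closed graph theorem, once one knows that $(A - z I_{\cH})^{-1}$ maps $\cH$ into $\dom(A_0)$---a fact you also use tacitly when writing $(I_{\cH}+T(z))^{-1} = (A_0 - z I_{\cH})(A - z I_{\cH})^{-1}$; it follows because $I_{\cH}+T(z)$ is injective (as $z \in \rho(A)$) and a compact perturbation of the identity, hence boundedly invertible with range all of $\cH$. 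Then $S_1 S_2 = (I_{\cH}+T(z))^{-1}T'(z) \in \cB_1(\cH)$ and $S_2 S_1 = (A_0 - z I_{\cH})^{-1} - (A - z I_{\cH})^{-1} \in \cB_1(\cH)$, so ${\tr}_{\cH}(S_1 S_2) = {\tr}_{\cH}(S_2 S_1)$ gives $\f{d}{dz} \ln\big({\det}_{\cH}(I_{\cH}+T(z))\big) = {\tr}_{\cH}\big((A_0 - z I_{\cH})^{-1} - (A - z I_{\cH})^{-1}\big)$, and hence \eqref{7.4} after the sign flip. This is precisely what the paper's proof does, in the equivalent guise of cycling a factor $(A_0 - z I_{\cH})^{-1}$ around the trace and inserting $\big\{(A - z I_{\cH})(A_0 - z I_{\cH})^{-1}\big\}^{-1}\big\{(A - z I_{\cH})(A_0 - z I_{\cH})^{-1}\big\} = I_{\cH}$. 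So the gap is local and repairable---cyclicity is indeed the right tool---but as written, the step you single out as the key identity would fail.
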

\begin{proof}
For completeness, and since we intend to extend this type of result to certain
quadratic form perturbations, we briefly sketch the proof of \eqref{7.4}. Pick
$z \in \rho(A) \cap \rho(A_0)$. Since assumption \eqref{7.3} is equivalent to
\begin{equation}
- (A - z I_{\cH}) \big[(A - z I_{\cH})^{-1} - (A_0 - z I_{\cH})^{-1}\big]
= (A - A_0) (A_0 - z I_{\cH})^{-1} \in \cB_1(\cH),        \lb{7.5}
\end{equation}
the identity
\begin{equation}
(A - z I_{\cH})(A_0 - z I_{\cH})^{-1} = I_{\cH} + (A - A_0) (A_0 - z I_{\cH})^{-1}
\end{equation}
shows that ${\det}_{\cH}\big((A - z I_{\cH})(A_0 - z I_{\cH})^{-1}\big)$ is well-defined
and analytic for $z \in \rho(A_0)$.
Incidentally, \eqref{7.5} also yields that if \eqref{7.3} is satisfied for some
$z \in \rho(A) \cap \rho(A_0)$, then it is satisfied for all
$z \in \rho(A) \cap \rho(A_0)$. An application of \eqref{7.1} and cyclicity of the trace (i.e., $\tr_{\cH}(ST) = \tr_{\cH}(TS)$ whenever 
$S, T \in \cB(\cH)$ with $ST, TS \in \cB_1(\cH)$) imply
\begin{align}
& - \f{d}{dz} \ln\big({\det}_{\cH}\big((A - z I_{\cH})(A_0 - z I_{\cH})^{-1}\big)\big) \no \\
& \quad = - {tr}_{\cH}\Big(\big\{(A - z I_{\cH})(A_0 - z I_{\cH})^{-1}\big\}^{-1}
\big[(A - A_0)(A_0 - z I_{\cH})^{-1}\big]'\Big)   \no \\
& \quad = - {tr}_{\cH}\Big(\big\{(A - z I_{\cH})(A_0 - z I_{\cH})^{-1}\big\}^{-1}
(A - A_0)(A_0 - z I_{\cH})^{-2}\Big)   \no \\
& \quad = - {tr}_{\cH}\Big((A_0 - z I_{\cH})^{-1}
\big\{(A - z I_{\cH})(A_0 - z I_{\cH})^{-1}\big\}^{-1}
(A - A_0)(A_0 - z I_{\cH})^{-1}\Big)   \no \\
& \quad = {tr}_{\cH}\Big((A_0 - z I_{\cH})^{-1}
\big\{(A - z I_{\cH})(A_0 - z I_{\cH})^{-1}\big\}^{-1}     \no \\
& \qquad \qquad \; \, \times (A - z I_{\cH}) 
\big[(A - z I_{\cH})^{-1} - (A_0 - z I_{\cH})^{-1}\big]\Big)   \no \\
& \quad = {tr}_{\cH}\Big((A_0 - z I_{\cH})^{-1}
\big\{(A - z I_{\cH})(A_0 - z I_{\cH})^{-1}\big\}^{-1}
\big\{(A - z I_{\cH}) (A_0 - z I_{\cH})^{-1}\big\}    \no \\
& \qquad \qquad \; \, \times (A_0 - z I_{\cH})
\big[(A - z I_{\cH})^{-1} - (A_0 - z I_{\cH})^{-1}\big]\Big)   \no \\
& \quad = {tr}_{\cH}\big((A - z I_{\cH})^{-1} - (A_0 - z I_{\cH})^{-1}\big).
\end{align}
\end{proof}

For an extension of Theorem \ref{t7.2}, applicable, in particular, to suitable
quadratic form perturbations, we briefly recall a few basic facts on operators
of positive-type
and their fractional powers. While this theory has been fully developed in
connection with complex Banach spaces, we continue to restrict ourselves here
to the case of complex, separable Hilbert spaces. For details on this theory we
refer, for instance, to \cite[Chs.\ 2, 3, 7]{Ha06}, \cite[Ch.\ 4]{KZPS76},
\cite[Ch.\ 4]{Lu09}, \cite[Chs.\ 1, 3--5]{MS01}, and \cite[Chs.\ 2, 16]{Ya10}.

\begin{definition} \lb{d7.3}
Let $A$ be a densely defined, closed, linear operator in $\cH$ and denote by
$S_{\omega} \subset \bbC$, $\omega \in [0,\pi)$, the open sector
\begin{equation}
S_{\omega} = \begin{cases} \{z\in\bbC \,|\, z \neq 0, \, |\arg(z)|<\omega\},
& \omega \in (0,\pi), \\
(0,\infty), & \omega = 0,
\end{cases}
\end{equation}
with vertex at $z=0$ along the positive real axis and opening angle $2 \omega$. \\
$(i)$ $A$ is said to be of {\it nonnegative-type} if
\begin{align}
\begin{split}
& (\alpha) \;\; (-\infty, 0) \subset \rho(A),    \lb{7.6} \\
& (\beta) \;\;
M(A) = \sup_{t > 0} \big\|t (A + t I_{\cH})^{-1}\big\|_{\cB(\cH)} < \infty.
\end{split}
\end{align}
$(ii)$ $A$ is said to be of {\it positive-type} if
\begin{align}
\begin{split}
& (\alpha) \;\; (-\infty, 0] \subset \rho(A),    \lb{7.7} \\
& (\beta) \;\;
M_A = \sup_{t \geq 0} \big\|(1 + t) (A + t I_{\cH})^{-1}\big\|_{\cB(\cH)} < \infty.
\end{split}
\end{align}
$(iii)$ $A$ is called {sectorial of angle $\omega \in [0,\pi)$}, denoted by
$A \in \Sect(\omega)$, if
\begin{align}
\begin{split}
& (\alpha) \;\; \sigma(A) \subseteq \ol{S_{\omega}},     \lb{7.8} \\
& (\beta) \text{ For all $\omega' \in (\omega,\pi)$, }  \,
M(A,\omega') = \sup_{z\in \bbC\backslash \ol{S_{\omega'}}}
\big\| z (A - z I_{\cH})^{-1}\big\|_{\cB(\cH)} < \infty.
\end{split}
\end{align}
$(iv)$ $A$ is called \it{quasi-sectorial of angle $\omega \in [0,\pi)$} if there exists
$t_0 \in\bbR$ such that $A + t_0 I_{\cH}$ is sectorial of angle $\omega \in [0,\pi)$.
In this context we introduce the shifted sector $- t_0 + S_{\omega}$, where
\begin{equation}
- t_0 + S_{\omega} = \begin{cases} \{z\in\bbC \,|\, z \neq - t_0, \,
|\arg(- t_0 + z)|<\omega\},
& \omega \in (0,\pi), \\
(- t_0,\infty), & \omega = 0.
\end{cases}
\end{equation}
\end{definition}

Next, we recall a number of useful facts: \\

\noindent
$\textbf{(I)}$ If $A$ is of nonnegative-type, then (cf., e.g., 
\cite[Proposition\ 2.1.1\,a)]{Ha06})
\begin{equation}
M(A) \geq 1 \, \text{ and } \, A \in \Sect(\pi - \arcsin(1/M(A))).   \lb{7.9}
\end{equation}
Moreover, if $A$ is of nonnegative-type (resp., of positive-type) then
\begin{equation}
A + t I_{\cH} \, \text{ is of nonnegative-type (resp., of positive-type) for all $t>0$.}
\lb{7.9a}
\end{equation}
If $A$ is of positive-type, then (cf., e.g., \cite[Lemma\ 4.2]{Lu09})
\begin{equation}
\{z\in\bbC \,|\, \Re(z) \leq 0, \, |\Im(z)| < (|\Re(z)| + 1)/M_A\} \cup
\{z\in\bbC \,|\, |z| < 1/M_A\} \subset \rho(A),    \lb{7.9aa}
\end{equation}
and for every $\omega_0 \in (0, \arctan(1/M_A))$, $r_0 \in (0,1/M_A)$, there exists
$M_0(A,\omega_0,r_0)>0$ such that
\begin{align}
\begin{split}
& \|(A - z I_{\cH})^{-1}\|_{\cB(\cH)} \leq \f{M_0(A,\omega_0,r_0)}{1 + |z|},    \\
& z \in
\{\zeta \in \bbC \,|\, \Re(\zeta) < 0, \, |\Im(\zeta)|/|\Re(\zeta)| \leq \tan(\omega_0)\}
\cup \{\zeta \in \bbC \,|\, |\zeta| \leq r_0\}.
\end{split}
\end{align}
$\textbf{(II)}$ If $A\in \Sect(\omega)$ for some $\omega \in [0,\pi)$ 
and $\ker(A) = \{0\}$, then
(cf., e.g., \cite[Proposition\ 2.1.1\,b)]{Ha06})
\begin{equation}
A^{-1} \in \Sect(\omega) \, \text{ and } \,
M\big(A^{-1}, \omega'\big) \leq M(A,\omega') + 1, \quad \omega' \in (\omega,\pi).
\lb{7.10}
\end{equation}
$\textbf{(III)}$ If $A\in \Sect(\omega)$ for some $\omega \in [0,\pi)$, 
then (cf., e.g., \cite[Proposition\ 2.1.1\,j)]{Ha06})
\begin{equation}
A^* \in \Sect(\omega) \, \text{ and } \,
M(A^*, \omega'\big) = M(A,\omega'), \quad \omega' \in (\omega,\pi).   \lb{7.11}
\end{equation}
$\textbf{(IV)}$ Suppose $A$ is of positive-type then (cf., e.g., 
\cite[p.\ 280]{KZPS76})
\begin{equation}
A^{-\alpha} = \f{\sin(\pi \alpha)}{\pi} \int_0^\infty dt \, t^{- \alpha} (A + t I_{\cH})^{-1}
\in \cB(\cH), \quad 0 < \Re(\alpha) < 1.    \lb{7.12}
\end{equation}
(In this context of bounded operators $A^{-\alpha}$, $0 <\alpha < 1$, and integrands bounded in norm by a Lebesgue integrable function, the integral in \eqref{7.12} and
in analogous situations in this section, is viewed as a norm convergent
Bochner integral.)
Moreover, $A^{-\alpha}$ has an analytic continuation to the strip
$0 < \Re(\alpha) < n + 1$, $n\in\bbN$, given by
\begin{align}
& A^{-\alpha} = \f{\sin(\pi \alpha)}{\pi} \f{n!}{(1-\alpha)(2-\alpha) \cdots (n-\alpha)}
\int_0^\infty dt \, t^{n - \alpha} (A + t I_{\cH})^{-n - 1} \in \cB(\cH),    \no \\
& \hspace*{8cm}  0 < \Re(\alpha) < n + 1.      \lb{7.13}
\end{align}
In particular,
\begin{align}
A^{-\alpha} = \f{\sin(\pi \alpha)}{\pi (1-\alpha)}
\int_0^\infty dt \, t^{1 - \alpha} (A + t I_{\cH})^{-2} \in \cB(\cH), \quad
0 < \Re(\alpha) < 2.     \lb{7.14}
\end{align}
We also note that if $A \in \Sect(\omega)$ and $ \alpha \in (0,1)$, then (cf., e.g.,
\cite[Remark\ 3.1.16]{Ha06}) $A^\alpha \in \Sect(\alpha \omega)$,
$M(A^\alpha) \leq M(A)$, and
\begin{align}
\begin{split}
(A^\alpha - z I_{\cH})^{-1} = \f{\sin(\pi \alpha)}{\pi} \int_0^{\infty} dt \,
\f{t^\alpha}{(z-t^\alpha e^{i \pi \alpha}) (z-t^\alpha e^{- i \pi \alpha})}
(A + t I_{\cH})^{-1},&   \\
|\arg(z)| > \alpha \pi.&
\end{split}
\end{align}
$\textbf{(V)}$ Suppose $A$ is of positive-type and $0 < \Re(\alpha) < n$ 
for some $n\in\bbN$, then (cf., e.g., \cite[Definition\ 4.5]{Lu09})
\begin{equation}
A^{\alpha} f = A^n A^{\alpha - n}f, \quad
f \in \dom(A^{\alpha}) = \{g \in \cH \,|\, A^{\alpha - n} g \in \dom(A^n)\}.    \lb{7.15}
\end{equation}
Moreover,
\begin{equation}
\dom(A^{\alpha}) = \ran(A^{-\alpha}) \, \text{ and } \, A^{\alpha} = (A^{-\alpha})^{-1},
\quad \Re(\alpha) > 0.    \lb{7.16}
\end{equation}
In particular, since $A^{-\alpha} \in \cB(\cH)$,
\begin{equation}
A^{\alpha} \, \text{ is closed in $\cH$ for all $\Re(\alpha) > 0$.}   \lb{7.17}
\end{equation}
$\textbf{(VI)}$ Suppose $A$ is of positive-type and $\Re(\alpha_1) > 0$, 
$\Re(\alpha_2) > 0$, then (cf., e.g., \cite[Proposition\ 4.4\,(iv)]{Lu09})
\begin{equation}
A^{-\alpha_1} A^{-\alpha_2} = A^{- \alpha_1 - \alpha_2}.    \lb{7.17a}
\end{equation}
$\textbf{(VII)}$ Suppose $A$ and $B$ are of positive-type and resolvent commuting, that is,
\begin{align}
\begin{split}
(A + s I_{\cH})^{-1} (B + t I_{\cH})^{-1} = (B + t I_{\cH})^{-1} (A + s I_{\cH})^{-1}& \\
\text{ for some (and hence for all) } \, s>0, \, t>0.&    \lb{7.17b}
\end{split}
\end{align}
Then (cf., e.g., \cite[p.\ 95]{Lu09})
\begin{align}
& (AB)^{\alpha} f = A^{\alpha} B^{\alpha} f = B^{\alpha} A^{\alpha} f = (BA)^{\alpha} f,
\no \\
& \quad f \in \dom((AB)^{\alpha})
= \{g \in \dom(B^{\alpha}) \,|\, B^{\alpha} g \in \dom(A^{\alpha})\}     \lb{7.17c} \\
& \qquad \;\;
= \{g \in \dom(A^{\alpha}) \,|\, A^{\alpha} g \in \dom(B^{\alpha})\}
= \dom((BA)^{\alpha}), \quad \alpha \in \bbC, \, \Re(\alpha) \neq 0.    \no
\end{align}
$\textbf{(VIII)}$ In the special case where $A$ is self-adjoint and strictly positive in $\cH$ (i.e., $A \geq \varepsilon I_{\cH}$ for some 
$\varepsilon > 0$), $A^\alpha$, $\alpha \in \bbC\backslash \{0\}$, defined on one hand as in the case of operators of positive-type above, and on 
the other by the spectral theorem, coincide (cf., e.g.,
\cite[Sect.\ 4.3.1]{Lu09}, \cite[Sect.\ 1.18.10]{Tr95}). In particular,
\begin{equation}
\dom(A^\alpha) = \bigg\{f\in\cH\,|\, \|A^{\alpha} f\|_{\cH}^2 =
\int_{[\varepsilon,\infty]} \lambda^{2 \Re(\alpha)} d\|E_A(\lambda) f\|_{\cH}^2
< \infty\bigg\}, \quad \alpha\in\bbC\backslash\{0\},
\end{equation}
in this case. Here $\{E_A(\lambda)\}_{\lambda \in \bbR}$ denotes the family of
spectral projections of $A$. \\
(It is possible to extend some of these formulas to $\Re(\alpha) = 0$, but we omit
the details since this will play no role in this manuscript.) \\

For the remainder of this section the basic assumptions on $A$ and $A_0$,
extending \eqref{7.2} and \eqref{7.3}, then read as follows:

\begin{hypothesis} \lb{h7.4}
Let $A$ and $A_0$ be densely defined, closed, linear operators in $\cH$. \\
$(i)$ Suppose there exists $t_0 \in \bbR$ such that $A + t_0 I_{\cH}$ and
$A_0 + t_0 I_{\cH}$ are of positive-type and $(A  + t_0 I_{\cH}) \in \Sect(\omega_0)$,
$(A_0  + t_0 I_{\cH}) \in \Sect(\omega_0)$ for some $\omega_0 \in [0,\pi)$. \\
$(ii)$ In addition, assume that for some $t_1 \geq t_0$,
\begin{align}
& \dom\big((A_0 + t_1 I_{\cH})^{1/2}\big) \subseteq
\dom\big((A + t_1 I_{\cH})^{1/2}\big),     \lb{7.18} \\
& \dom\big((A_0^* + t_1 I_{\cH})^{1/2}\big) \subseteq
\dom\big((A^* + t_1 I_{\cH})^{1/2}\big),     \lb{7.19} \\
& \ol{(A + t_1 I_{\cH})^{1/2} \big[(A + t_1 I_{\cH})^{-1} - (A_0 + t_1 I_{\cH})^{-1}\big]
	(A + t_1 I_{\cH})^{1/2}} \in \cB_1(\cH).    \lb{7.20}
\end{align}
\end{hypothesis}

One observes by item $(I)$, there always exists $\omega_0 \in [0,\pi)$ 
as in Hypothesis \ref{h7.4}\,$(i)$ as long as $A + t_0 I_{\cH}$ and
$A_0 + t_0 I_{\cH}$ are of nonnegative-type.

Our next results will show that if \eqref{7.18}--\eqref{7.20} hold for some
$t_1 \geq t_0$, then they actually extend to
$- t_1 = z \in \bbC \backslash \big(\ol{- t_0 + S_{\omega_0}}\big)$:

\begin{lemma} \lb{l7.5}
Assume that $A$ satisfy Hypothesis \ref{h7.4}\,$(i)$. Then
$(A + t I_{\cH})^{-1/2} \in \cB(\cH)$ $($resp., $(A^* + t I_{\cH})^{-1/2} \in \cB(\cH)$$)$,
$t>t_0$, analytically extends to $(A - z I_{\cH})^{-1/2} \in \cB(\cH)$ $($resp.,
$(A^* - z I_{\cH})^{-1/2} \in \cB(\cH)$$)$ for
$z \in \bbC \backslash \big(\ol{- t_0 + S_{\omega_0}}\big)$. In addition,
\begin{align}
\dom\big((A - z I_{\cH})^{1/2}\big) &= \dom\big((A + t_0 I_{\cH})^{1/2}\big),
\quad z \in \bbC \backslash \big(\ol{- t_0 + S_{\omega_0}}\big),    \lb{7.20Aa}  \\
\dom\big((A^* - z I_{\cH})^{1/2}\big) &= \dom\big((A^* + t_0 I_{\cH})^{1/2}\big),
\quad z \in \bbC \backslash \big(\ol{- t_0 + S_{\omega_0}}\big).    \lb{7.20Ab}
\end{align}
\end{lemma}
\begin{proof}
Applying \eqref{7.12} with $\alpha = 1/2$ and $A$ replaced by
$(A + s I_{\cH})$, $s > t_0$, one obtains
\begin{equation}
(A + s I_{\cH})^{-1/2} = \f{1}{\pi} \int_0^{\infty} dt \, t^{-1/2}
(A + (s + t) I_{\cH})^{-1}, \quad s > t_0.   \lb{7.20b}
\end{equation}
The resolvent estimates in \eqref{7.7} and \eqref{7.8} then prove that
$(A + s I_{\cH})^{-1/2}$, $s > t_0$, analytically extends to
$(A - z I_{\cH})^{-1/2} \in \cB(\cH)$,
$z \in \bbC \backslash \big(\ol{- t_0 + S_{\omega_0}}\big)$, with the result
\begin{equation}
(A - z I_{\cH})^{-1/2} = \f{1}{\pi} \int_0^{\infty} dt \, t^{-1/2}
(A + (- z + t) I_{\cH})^{-1}, \quad
z \in \bbC \backslash \big(\ol{- t_0 + S_{\omega_0}}\big).   \lb{7.20ba}
\end{equation}

In the following we choose $z,\, z_1 \in \bbC \backslash \big(\ol{- t_0 + S_{\omega_0}}\big)$ such that $|z_1-z| < \big\|(A - z_1 I_{\cH})^{-1}\big\|_{\cB(\cH)}^{-1}$ and consider the resolvent identity
\begin{equation}
(A - z I_{\cH}) = (A - z_1 I_{\cH})
\big[I_{\cH} + (z_1-z) (A - z_1 I_{\cH})^{-1}\big]. \lb{7.20AA}
\end{equation}
It follows from $(A  + t_0 I_{\cH}) \in \Sect(\omega_0)$ and \eqref{7.7}, \eqref{7.8} that 
\begin{equation} 
A - z I_{\cH}, \;  
z \in \bbC \backslash \big(\ol{- t_0 + S_{\omega_0}}\big), \, \text{ is of positive-type.}      \lb{7.20AC}
\end{equation} 
To prove the claim \eqref{7.20AC} we first note that 
$z \in \bbC \backslash \big(\ol{- t_0 + S_{\omega_0}}\big)$ implies that 
$(-\infty, 0] \subset \rho(A - z I_{\cH})$. Next, one chooses 
$\omega'_0 \in (\omega,\pi)$ such that actually,  
$z \in \bbC \backslash \big(\ol{- t_0 + S_{\omega'_0}}\big)$. Then 
\begin{align}
\begin{split} 
& \sup_{t\geq0} \big\|(1+t)(A + (t - z)I_{\cH})^{-1}\big\|_{\cB(\cH)} \\ 
& \quad = \sup_{\zeta=z+t_0-t, \, t\geq0}
|(1+t)/\zeta| \big\|\zeta (A+(t_0-z)I_{\cH})^{-1}\big\|_{\cB(\cH)} 
\leq C(z) <\infty
\end{split} 
\end{align} 
since $\sup_{t\geq 0} |(1+t)/(z + t_0 - t)| < \infty$, the estimate 
\eqref{7.7}\,$(\beta)$ becomes a special case of \eqref{7.8}\,$(\beta)$. 

In addition, $\big\|(z_1-z)(A - z_1 I_{\cH})^{-1}\big\|_{\cB(\cH)} < 1$ implies that $B=I_{\cH} + (z_1-z) (A - z_1 I_{\cH})^{-1}$ is of positive-type as well since
\begin{align}
& \left\|(1+t)(B+tI_{\cH})^{-1}\right\|_{\cB(\cH)} 
= \left\|(1+t)\left[(1+t)I_{\cH} 
+ (z_1-z) (A - z_1 I_{\cH})^{-1}\right]^{-1}\right\|_{\cB(\cH)} \no
\\
& \quad = \bigg\|\Big[I_{\cH} + \f{z_1-z}{1+t}(A - z_1 I_{\cH})^{-1}\Big]^{-1}\bigg\|_{\cB(\cH)}     \no \\[1mm] 
& \quad \leq \Big[1- \left\|(z_1-z)(A - z_1 I_{\cH})^{-1}\right\|_{\cB(\cH)} 
\Big]^{-1}<\infty,  \quad  t\geq0.   \lb{7.20AB}
\end{align}
Thus $(VII)$ applied to the resolvent identity \eqref{7.20AA} yields
\begin{align}
\begin{split}
(A - z I_{\cH})^{-1/2} = (A - z_1 I_{\cH})^{-1/2}
\big[I_{\cH} + (z_1 - z) (A - z_1 I_{\cH})^{-1}\big]^{-1/2},&    \\
z, z_1 \in \bbC \backslash \big(\ol{- t_0 + S_{\omega_0}}\big), \;
|z-z_1| < \big\|(A - z_1 I_{\cH})^{-1}\big\|_{\cB(\cH)}^{-1}.&
\end{split}
\lb{7.20cd}
\end{align}
Bounded invertibility of $\big[I_{\cH} + (-z + z_1) (A - z_1 I_{\cH})^{-1}\big]^{-1/2}$
for $z, z_1 \in \bbC \backslash \big(\ol{- t_0 + S_{\omega_0}}\big)$,
$|z-z_1| < \big\|(A - z_1 I_{\cH})^{-1}\big\|_{\cB(\cH)}^{-1}$ then implies that
$\ran\big((A - z I_{\cH})^{-1/2}\big)$ is locally constant in
$z \in \bbC \backslash \big(\ol{- t_0 + S_{\omega_0}}\big)$,
\begin{align}
\begin{split}
& \ran\big((A - z I_{\cH})^{-1/2}\big) = \ran\big((A -z_1 I_{\cH})^{-1/2}\big), \\
& z, z_1 \in \bbC \backslash \big(\ol{- t_0 + S_{\omega_0}}\big), \;
|z-z_1| < \big\|(A - z_1 I_{\cH})^{-1}\big\|_{\cB(\cH)}^{-1},     \lb{7.20ce}
\end{split}
\end{align}
and hence that
\begin{equation}
\ran\big((A - z I_{\cH})^{-1/2}\big) = \ran\big((A -z_1 I_{\cH})^{-1/2}\big),  \quad
z, z_1 \in \bbC \backslash \big(\ol{- t_0 + S_{\omega_0}}\big).     \lb{7.20cf}
\end{equation}
An application of \eqref{7.16} then gives \eqref{7.20Aa}. Equation
\eqref{7.20Ab} is proved analogously with the help of $(III)$.
\end{proof}

\begin{lemma} \lb{l7.6}
Assume that $A$ and $A_0$ satisfy Hypothesis \ref{h7.4}\,$(i)$ and suppose
that \eqref{7.18} and \eqref{7.19} hold for some $t_1 \geq t_0$. Then \eqref{7.18}
and \eqref{7.19} extend to
\begin{align}
& \dom\big((A_0 - z I_{\cH})^{1/2}\big) \subseteq
\dom\big((A - z I_{\cH})^{1/2}\big), \quad
z \in \bbC \backslash \big(\ol{- t_0 + S_{\omega_0}}\big),    \lb{7.20da} \\
& \dom\big((A_0^* - z I_{\cH})^{1/2}\big) \subseteq
\dom\big((A^* - z I_{\cH})^{1/2}\big),
\quad z \in \bbC \backslash \big(\ol{- t_0 + S_{\omega_0}}\big). \lb{7.20db}
\end{align}
Moreover,
\begin{align}
& (A - z I_{\cH})^{1/2} (A_0 - z I_{\cH})^{-1/2} \in \cB(\cH) \, \text{ and } \,
(A^* - z I_{\cH})^{1/2} (A_0^* - z I_{\cH})^{-1/2} \in \cB(\cH),    \no  \\
& \quad   \text{ are analytic for
$z \in \bbC \backslash \big(\ol{- t_0 + S_{\omega_0}}\big)$
with respect to the $\cB(\cH)$-norm.}    \lb{7.20f}
\end{align}
\end{lemma}
\begin{proof}
By items $(I)$ and $(III)$ it again suffices to just focus on the proof of 
\eqref{7.20da}.

Since by \eqref{7.20Aa} and \eqref{7.20Ab} the domains of $(A - z I_{\cH})^{1/2}$
and $(A^* - z I_{\cH})^{1/2}$ are $z$-independent for
$z \in \bbC \backslash \big(\ol{- t_0 + S_{\omega_0}}\big)$, \eqref{7.18} and
\eqref{7.19} extend to $z \in \bbC \backslash \big(\ol{- t_0 + S_{\omega_0}}\big)$.

To prove the analyticity statement involving $A$ and $A_0$ in \eqref{7.20f} we write
\begin{align}
& (A - z I_{\cH})^{1/2} (A_0 - z I_{\cH})^{-1/2} =
\big[(A - z I_{\cH})^{1/2} (A - z_0 I_{\cH})^{-1/2}\big]    \no \\
& \quad \times \big[(A - z_0 I_{\cH})^{1/2} (A_0 - z_0 I_{\cH})^{-1/2}\big]
\big[(A_0 - z_0 I_{\cH})^{1/2} (A_0 - z I_{\cH})^{-1/2}\big],    \lb{7.20g} \\
& \hspace*{7.33cm}
z, z_0 \in \bbC \backslash \big(\ol{- t_0 + S_{\omega_0}}\big),   \no
\end{align}
and separately investigate each of the three factors in \eqref{7.20g}. Since by
hypothesis \eqref{7.20Aa} holds for $A$ and $A_0$, \eqref{7.20da} yields that
\begin{equation}
(A - z_0 I_{\cH})^{1/2} (A_0 - z_0 I_{\cH})^{-1/2} \in \cB(\cH). \lb{7.20ga}
\end{equation}
Next, applying \eqref{7.20cd} with $A$ replaced by $A_0$ yields
\begin{align}
\begin{split}
&(A_0 - z_0 I_{\cH})^{1/2} (A_0 - z I_{\cH})^{-1/2}
\\
&\quad = \big[(A_0 - z_0 I_{\cH})^{1/2}(A_0 - z_1 I_{\cH})^{-1/2}\big] \big[I_{\cH} + (z_1-z) (A_0 - z_1 I_{\cH})^{-1}\big]^{-1/2},
\\
&\hspace{2.35cm} z, z_0, z_1 \in \bbC \backslash \big(\ol{- t_0 + S_{\omega_0}}\big), \; |z-z_1| < \big\|(A - z_1 I_{\cH})^{-1}\big\|_{\cB(\cH)}^{-1}.
\end{split} \lb{7.20h}
\end{align}
Since by \eqref{7.20AB} (with $A$ replaced by $A_0$) $B=I_{\cH} + (z_1-z) (A_0 - z_1 I_{\cH})^{-1}$ is of positive-type, it follows from \eqref{7.12} (with $\alpha = 1/2$ and $A$ replaced by $B$) and a geometric series expansion that
\begin{align}
B^{-1/2} &= \big[I_{\cH} + (z_1-z) (A_0 - z_1 I_{\cH})^{-1}\big]^{-1/2}   \no \\
&= \f{1}{\pi}
\int_0^{\infty} dt \, t^{-1/2} \big[(1+t) I_{\cH} + (z_1-z) (A_0 - z_1 I_{\cH})^{-1}\big]^{-1}
\no \\
&= \sum_{m=0}^{\infty} (-1)^m (z_1-z)^m (A_0 - z_1 I_{\cH})^{-m} \,
 \f{1}{\pi} \int_0^{\infty} \f{dt \, t^{-1/2}}{(1 + t)^{m+1}}  \no \\
&= \sum_{m=0}^{\infty} \f{\Gamma(m + (1/2))}{\Gamma(m+1) \Gamma(1/2)}
(-1)^m (z_1-z)^m (A_0 - z_1 I_{\cH})^{-m}.   \lb{7.20a}
\end{align}
Thus $\big[I_{\cH} + (z_1-z) (A_0 - z_1 I_{\cH})^{-1}\big]^{-1/2}$ is analytic with respect to $z$ for $z,\, z_1 \in \bbC \backslash \big(\ol{- t_0 + S_{\omega_0}}\big)$, $|z-z_1| < \big\|(A_0 - z_1 I_{\cH})^{-1}\big\|_{\cB(\cH)}^{-1}$. Moreover, by \eqref{7.20Aa} (with $A$ replaced by $A_0$), $(A_0 - z_0 I_{\cH})^{1/2}(A_0 - z_1 I_{\cH})^{-1/2}\in\cB(\cH)$, and hence one concludes that the left-hand side of \eqref{7.20h} is analytic with respect to $z \in \bbC \backslash \big(\ol{- t_0 + S_{\omega_0}}\big)$.

Finally, writing
\begin{align}
& (A - z I_{\cH})^{1/2} (A - z_0 I_{\cH})^{-1/2} =
(A - z I_{\cH}) (A - z I_{\cH})^{-1/2} (A - z_0 I_{\cH})^{-1/2}   \no \\
& \quad = A (A - z I_{\cH})^{-1/2} (A - z_0 I_{\cH})^{-1/2}
- z (A - z I_{\cH})^{-1/2} (A - z_0 I_{\cH})^{-1/2}    \lb{7.20i}
\end{align}
it suffices to focus on the term $A (A - z I_{\cH})^{-1/2} (A - z_0 I_{\cH})^{-1/2}$.
Writing
\begin{align}
\begin{split}
& A (A - z I_{\cH})^{-1/2} (A - z_0 I_{\cH})^{-1/2}   \\
& \quad = \big[A (A - z_0 I_{\cH})^{-1}\big]
\big[(A - z_0 I_{\cH})^{1/2} (A - z I_{\cH})^{-1/2}\big],    \lb{7.20j}
\end{split}
\end{align}
employing the obvious fact that $A (A - z_0 I_{\cH})^{-1} \in \cB(\cH)$, the analyticity
of $(A - z_0 I_{\cH})^{1/2} (A - z I_{\cH})^{-1/2}$ (and hence that of the left-hand
sides in \eqref{7.20i} and \eqref{7.20j}) with respect to
$z \in \bbC \backslash \big(\ol{- t_0 + S_{\omega_0}}\big)$ then follows
as in \eqref{7.20h} above with $A_0$ replaced by $A$.
\end{proof}

\begin{lemma} \lb{l7.7}
Assume that $A$ and $A_0$ satisfy Hypothesis \ref{h7.4}. Then \eqref{7.20}
extends to
\begin{align}
\begin{split}
\ol{(A -z I_{\cH})^{1/2} \big[(A - z I_{\cH})^{-1} - (A_0 - z I_{\cH})^{-1}\big]
	(A - z I_{\cH})^{1/2}} \in \cB_1(\cH),&    \lb{7.20k} \\
z \in \bbC \backslash \big(\ol{- t_0 + S_{\omega_0}}\big).& 	
\end{split}
\end{align}
In addition, $\ol{(A -z I_{\cH})^{1/2}
\big[(A - z I_{\cH})^{-1} - (A_0 - z I_{\cH})^{-1}\big] (A - z I_{\cH})^{1/2}}$ is
analytic for $z \in \bbC \backslash \big(\ol{- t_0 + S_{\omega_0}}\big)$ with
respect to the $\cB_1(\cH)$-norm.
\end{lemma}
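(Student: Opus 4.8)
The plan is to reduce the general-$z$ operator back to the base point $z_1 := - t_1 \in \bbC \backslash \big(\ol{- t_0 + S_{\omega_0}}\big)$, where hypothesis \eqref{7.20} already furnishes a trace class operator, by conjugating it with two bounded, $\cB(\cH)$-analytic factors supplied by Lemmas \ref{l7.5} and \ref{l7.6}. Write
\[
K(z_1) := \ol{(A - z_1 I_{\cH})^{1/2}\big[(A - z_1 I_{\cH})^{-1} - (A_0 - z_1 I_{\cH})^{-1}\big](A - z_1 I_{\cH})^{1/2}} \in \cB_1(\cH).
\]
The engine of the argument is the purely algebraic resolvent identity
\begin{align*}
& (A - z I_{\cH})^{-1} - (A_0 - z I_{\cH})^{-1} \\
& \quad = (A - z_1 I_{\cH})(A - z I_{\cH})^{-1} \big[(A - z_1 I_{\cH})^{-1} - (A_0 - z_1 I_{\cH})^{-1}\big] \\
& \qquad \times (A_0 - z_1 I_{\cH})(A_0 - z I_{\cH})^{-1},
\end{align*}
valid for $z \in \bbC \backslash \big(\ol{- t_0 + S_{\omega_0}}\big)$, which I would verify by applying the first resolvent identity to each of $(A - z I_{\cH})^{-1}$ and $(A_0 - z I_{\cH})^{-1}$ separately and using $(A_0 - z_1 I_{\cH})(A_0 - z I_{\cH})^{-1} = I_{\cH} + (z - z_1)(A_0 - z I_{\cH})^{-1}$.

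Next I would conjugate this identity by $(A - z I_{\cH})^{1/2}$ on both sides. Since all functions of $A$ commute, the left outer factor collapses, $(A - z I_{\cH})^{1/2}(A - z_1 I_{\cH})(A - z I_{\cH})^{-1} = G_A(z)(A - z_1 I_{\cH})^{1/2}$ with $G_A(z) := (A - z_1 I_{\cH})^{1/2}(A - z I_{\cH})^{-1/2}$, while the right outer factor is rewritten as $(A_0 - z_1 I_{\cH})(A_0 - z I_{\cH})^{-1}(A - z I_{\cH})^{1/2} = (A - z_1 I_{\cH})^{1/2}\Psi(z)$ with $\Psi(z) := (A - z_1 I_{\cH})^{-1/2}(A_0 - z_1 I_{\cH})(A_0 - z I_{\cH})^{-1}(A - z I_{\cH})^{1/2}$. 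On a suitable dense domain this yields
\[
(A - z I_{\cH})^{1/2}\big[(A - z I_{\cH})^{-1} - (A_0 - z I_{\cH})^{-1}\big](A - z I_{\cH})^{1/2} = G_A(z)\,\big[(A - z_1 I_{\cH})^{1/2}(\cdots)(A - z_1 I_{\cH})^{1/2}\big]\,\Psi(z),
\]
so that, passing to closures and using that $G_A(z),\Psi(z)$ are bounded, $K(z) = G_A(z)\,K(z_1)\,\Psi(z)$.

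It then remains to check that $G_A(\cdot)$ and $\Psi(\cdot)$ are $\cB(\cH)$-valued and analytic on $\bbC \backslash \big(\ol{- t_0 + S_{\omega_0}}\big)$. For $G_A$ this is the special case $A_0 = A$ of \eqref{7.20f} in Lemma \ref{l7.6}. For $\Psi$ I would again use $(A_0 - z_1 I_{\cH})(A_0 - z I_{\cH})^{-1} = I_{\cH} + (z - z_1)(A_0 - z I_{\cH})^{-1}$ to split $\Psi(z) = (A - z_1 I_{\cH})^{-1/2}(A - z I_{\cH})^{1/2} + (z - z_1)\big[(A - z_1 I_{\cH})^{-1/2}(A_0 - z I_{\cH})^{-1/2}\big]\big[\ol{(A_0 - z I_{\cH})^{-1/2}(A - z I_{\cH})^{1/2}}\big]$; the first summand is again the $A_0 = A$ case of Lemma \ref{l7.6}, the bracket $(A - z_1 I_{\cH})^{-1/2}(A_0 - z I_{\cH})^{-1/2}$ is a product of a fixed bounded operator with the analytic family $(A_0 - z I_{\cH})^{-1/2}$ from Lemma \ref{l7.5}, and the last bracket is the adjoint of $(A^* - \bar z I_{\cH})^{1/2}(A_0^* - \bar z I_{\cH})^{-1/2}$, which is bounded and analytic in $\bar z$ by the adjoint part of \eqref{7.20f} (here item $(III)$ and the adjoint domain inclusion \eqref{7.19} are what make this factor available). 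With $G_A(z),\Psi(z) \in \cB(\cH)$ analytic and $K(z_1) \in \cB_1(\cH)$ fixed, the ideal property of $\cB_1(\cH)$ gives \eqref{7.20k}, and $\cB_1(\cH)$-analyticity of $z \mapsto G_A(z) K(z_1)\Psi(z)$ follows from $\|X K(z_1) Y\|_{\cB_1(\cH)} \le \|X\|_{\cB(\cH)}\|K(z_1)\|_{\cB_1(\cH)}\|Y\|_{\cB(\cH)}$ together with the $\cB(\cH)$-analyticity of $G_A,\Psi$.

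I expect the main obstacle to be the domain and closure bookkeeping: justifying rigorously that the formal manipulation of the unbounded fractional powers holds on a dense domain, and that the closure of the resulting unbounded product is exactly the bounded operator $G_A(z)K(z_1)\Psi(z)$. The most delicate single point is the boundedness of the crossed factor $\ol{(A_0 - z I_{\cH})^{-1/2}(A - z I_{\cH})^{1/2}}$, which is not covered by Lemma \ref{l7.6} directly but only through passage to adjoints, and this is precisely the step that forces the adjoint hypotheses \eqref{7.19} (and hence \eqref{7.20db}) into play.
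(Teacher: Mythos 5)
Your proposal is correct and is essentially the paper's own proof: the same resolvent identity anchored at the hypothesis point $-t_1$, the same sandwich $K(z)=G_A(z)\,K(-t_1)\,\Psi(z)$ with bounded, analytic outer factors supplied by Lemmas \ref{l7.5} and \ref{l7.6}, the same splitting of the right factor via $(A_0+t_1I_{\cH})(A_0-zI_{\cH})^{-1}=I_{\cH}+(z+t_1)(A_0-zI_{\cH})^{-1}$, and the same adjoint trick $\ol{(A_0-zI_{\cH})^{-1/2}(A-zI_{\cH})^{1/2}}=\big[(A^*-\ol{z}I_{\cH})^{1/2}(A_0^*-\ol{z}I_{\cH})^{-1/2}\big]^*$ that forces \eqref{7.19} into play, followed by the $\cB_1(\cH)$ ideal property. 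The only slip is citational: the mixed-parameter factors $(A-z_1I_{\cH})^{\pm1/2}(A-zI_{\cH})^{\mp1/2}$ are not the $A_0=A$ case of \eqref{7.20f} (which would merely give $I_{\cH}$, since both powers there carry the same $z$), but rather follow from \eqref{7.20cd} and from the intra-$A$ factors treated in \eqref{7.20h}--\eqref{7.20j} within the proof of Lemma \ref{l7.6}.
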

\begin{proof} Let $z \in \bbC \backslash \big(\ol{- t_0 + S_{\omega_0}}\big)$
and $t_1 \geq t_0$ as in \eqref{7.20}. Using the fact
\begin{align}
\begin{split}
& (A - z I_{\cH})^{-1} - (A_0 - z I_{\cH})^{-1}
= (A + t_1 I_{\cH}) (A - z I_{\cH})^{-1}    \\
& \quad \times \big[(A + t_1 I_{\cH})^{-1} - (A_0 + t_1 I_{\cH})^{-1}\big]
(A_0 + t_1 I_{\cH}) (A_0 - z I_{\cH})^{-1},
\end{split}
\end{align}
one obtains
\begin{align}
& \ol{(A -z I_{\cH})^{1/2} \big[(A - z I_{\cH})^{-1} - (A_0 - z I_{\cH})^{-1} \big]
(A - z I_{\cH})^{1/2}}   \no \\
& \quad = \big[(A -z I_{\cH})^{1/2} (A + t_1 I_{\cH})^{-1/2}\big]
\big[(A + t_1 I_{\cH}) (A - z I_{\cH})^{-1}\big]    \no \\
& \qquad \times \cl\big\{(A + t_1 I_{\cH})^{1/2}
\big[(A + t_1 I_{\cH})^{-1} - (A_0 + t_1 I_{\cH})^{-1}\big] (A + t_1 I_{\cH})^{1/2}
\no \\
& \qquad \times (A + t_1 I_{\cH})^{-1/2}
(A_0 + t_1 I_{\cH}) (A_0 - z I_{\cH})^{-1} (A - z I_{\cH})^{1/2}\big\}   \no \\
& \quad = \big[(A -z I_{\cH})^{1/2} (A + t_1 I_{\cH})^{-1/2}\big]
\big[(A + t_1 I_{\cH}) (A - z I_{\cH})^{-1}\big]    \no \\
& \qquad \times \ol{(A + t_1 I_{\cH})^{1/2}
\big[(A + t_1 I_{\cH})^{-1} - (A_0 + t_1 I_{\cH})^{-1}\big] (A + t_1 I_{\cH})^{1/2}}
\no \\
& \qquad \times \big\{(A -z I_{\cH})^{1/2} (A + t_1 I_{\cH})^{-1/2}    \lb{7.20l} \\
& \qquad \quad + (z+t) (A + t_1 I_{\cH})^{-1/2} (A - z I_{\cH})^{-1/2}
\big[(A^* - {\ol z} I_{\cH})^{1/2} (A_0^* - {\ol z} I_{\cH})^{-1/2}\big]^*\big\},   \no
\end{align}
where we employed the identity
\begin{equation}
(A_0 + t_1 I_{\cH}) (A_0 -z I_{\cH})^{-1}
= I_{\cH} + (z + t_1) (A_0 -z I_{\cH})^{-1}
\end{equation}
and used the symbol $\cl\{\dots\}$ to denote the operator closure (in addition to
our usual bar symbol) as the latter extends over two lines. By Lemmas
\ref{l7.6} and \ref{l7.7}, all square brackets $[\cdots]$ in \eqref{7.20l} lie in
$\cB(\cH)$. Thus, the trace class property in assumption \eqref{7.20} proves
that in \eqref{7.20k}.

Finally, the analyticity statements in \eqref{7.20f} (see also the one in
\eqref{7.20i}) employed in \eqref{7.20l} prove the $\cB_1(\cH)$-analyticity of
the operator in \eqref{7.20k}.
\end{proof}

\begin{theorem} \lb{t7.8}
Assume that $A$ and $A_0$ satisfy Hypothesis \ref{h7.4}.Then
\begin{align}
\begin{split}
& - \f{d}{dz} \ln\Big({\det}_{\cH}\Big(\ol{(A - z I_{\cH})^{1/2}(A_0 - z I_{\cH})^{-1}
	(A - z I_{\cH})^{1/2}}\Big)\Big)   \\
&\quad = {\tr}_{\cH}\big((A - z I_{\cH})^{-1} - (A_0 - z I_{\cH})^{-1}\big),     \lb{7.21}
\end{split}
\end{align}
for all $z \in \bbC \backslash \big(\ol{- t_0 + S_{\omega_0}}\big)$ such that $\ol{(A - z I_{\cH})^{1/2}(A_0 - z I_{\cH})^{-1}(A - z I_{\cH})^{1/2}}$ is boundedly invertible.
\end{theorem}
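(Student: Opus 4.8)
My plan is to recast the symmetrized determinant as an ordinary Fredholm determinant of the form ${\det}_{\cH}(I_{\cH}+T(z))$ and to differentiate it via Theorem \ref{t7.1}. First, writing $B(z)=(A-zI_{\cH})^{1/2}$ and invoking \eqref{7.16}, \eqref{7.17a} together with Lemma \ref{l7.5}, one has $B(z)(A-zI_{\cH})^{-1}B(z)=I_{\cH}$ on $\dom(B(z))$, so that the operator under the determinant in \eqref{7.21} equals
\[
C(z) := \ol{B(z)(A_0-zI_{\cH})^{-1}B(z)} = I_{\cH}+T(z),
\]
\[
T(z) := -\ol{B(z)\big[(A-zI_{\cH})^{-1}-(A_0-zI_{\cH})^{-1}\big]B(z)}.
\]
By Lemma \ref{l7.7}, $T(z)\in\cB_1(\cH)$ and $T(\cdot)$ is $\cB_1(\cH)$-analytic on $\bbC\backslash\big(\ol{-t_0+S_{\omega_0}}\big)$. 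Hence, at every $z$ for which $C(z)$ is boundedly invertible, Theorem \ref{t7.1} applies and gives $\f{d}{dz}\ln\big({\det}_{\cH}(C(z))\big)=\tr_{\cH}\big(C(z)^{-1}C'(z)\big)$, so the whole task reduces to identifying this trace with $-\tr_{\cH}\big((A-zI_{\cH})^{-1}-(A_0-zI_{\cH})^{-1}\big)$.

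The heuristic that dictates the answer is a formal computation using $C(z)^{-1}=\ol{B(z)^{-1}(A_0-zI_{\cH})B(z)^{-1}}$, $\f{d}{dz}B(z)=-\tfrac12 B(z)^{-1}$, and $\f{d}{dz}(A_0-zI_{\cH})^{-1}=(A_0-zI_{\cH})^{-2}$. The product rule then splits $C(z)^{-1}C'(z)$ into three contributions whose (formal) traces, obtained by cyclically permuting the factors, are $-\tfrac12\tr_{\cH}((A-zI_{\cH})^{-1})$, $+\tr_{\cH}((A_0-zI_{\cH})^{-1})$, and $-\tfrac12\tr_{\cH}((A-zI_{\cH})^{-1})$; their sum is precisely the desired right-hand side. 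This confirms the target identity and isolates exactly what must be justified.

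The main obstacle is turning this heuristic into a rigorous argument, and it is here that the symmetrization — and Lemmas \ref{l7.5}--\ref{l7.7} — are indispensable. In the form-perturbation setting the three contributions are individually \emph{not} trace class (indeed the individual traces $\tr_{\cH}((A-zI_{\cH})^{-1})$ and $\tr_{\cH}((A_0-zI_{\cH})^{-1})$ diverge), no pairing of them is separately trace class, and the naive manipulation secretly involves operators such as $(A_0-zI_{\cH})(A-zI_{\cH})^{-1}$ that need not be bounded; only the full combination $C(z)^{-1}C'(z)$ lies in $\cB_1(\cH)$. To make the computation legitimate I would work throughout with the bounded, $\cB(\cH)$-analytic half-power products furnished by Lemma \ref{l7.6}, namely $G(z)=(A-zI_{\cH})^{1/2}(A_0-zI_{\cH})^{-1/2}$ and $H(z)=\ol{(A_0-zI_{\cH})^{-1/2}(A-zI_{\cH})^{1/2}}$ with $C(z)=G(z)H(z)$, express $C'(z)$ and $C(z)^{-1}C'(z)$ solely through these bounded operators and their norm-derivatives, and then combine the terms so that the trace-class resolvent difference supplied by Lemma \ref{l7.7} (cf.\ \eqref{7.20k}) is isolated as a single factor; cyclicity of the trace is invoked only for bounded operators surrounding this trace-class core, which is permissible. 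An alternative route to the same end is to substitute the Bochner-integral representations \eqref{7.12}--\eqref{7.14} for all half-integer powers, reducing every expression to norm-convergent integrals of products of the bounded resolvents $(A+tI_{\cH})^{-1}$, $(A_0+tI_{\cH})^{-1}$, and to perform the cyclic rearrangements at that bounded level before reassembling the integrals. Throughout, the interchange of $\f{d}{dz}$ with the determinant and the validity of the product rule are guaranteed by the $\cB(\cH)$- and $\cB_1(\cH)$-analyticity established in Lemmas \ref{l7.5}--\ref{l7.7} and items \textbf{(I)}--\textbf{(VIII)}.
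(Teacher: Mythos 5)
Your proposal is correct and follows essentially the same route as the paper: the identity $C(z)=I_{\cH}+T(z)$ is the paper's \eqref{7.22}, the appeal to Theorem \ref{t7.1} rests on exactly the $\cB_1(\cH)$-analyticity of Lemma \ref{l7.7}, and your rigorous strategy---factoring $C(z)=G(z)H(z)$ into the bounded half-power products of Lemma \ref{l7.6} (the paper's $T_1(z)$, $T_2(z)$, with $H(z)$ realized as the adjoint $[(A^*-\ol z I_{\cH})^{1/2}(A_0^*-\ol z I_{\cH})^{-1/2}]^*$), differentiating via the Bochner-integral representation \eqref{7.12}--\eqref{7.14}, and invoking cyclicity only for bounded operators around the trace-class resolvent difference---is precisely how the paper computes $T_1'(z)$, $T_2'(z)$, arrives at \eqref{7.22b}, and carries out the final trace rearrangement \eqref{7.23} (which splits the answer into two halves, each contributing $\tfrac12\,{\tr}_{\cH}\big((A-zI_{\cH})^{-1}-(A_0-zI_{\cH})^{-1}\big)$). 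Your formal heuristic even predicts the correct intermediate expression \eqref{7.22b}, so nothing in your plan would fail when executed.
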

\begin{proof} Let $z\in \bbC \backslash \big(\ol{- t_0 + S_{\omega_0}}\big)$.
We note that by \eqref{7.20f} one has
\begin{align}
& \ol{(A - z I_{\cH})^{1/2} \big[(A - z I_{\cH})^{-1} - (A_0 - z I_{\cH})^{-1}\big]
	(A - z I_{\cH})^{1/2}}     \no \\
	& \quad = I_{\cH} - \ol{(A - z I_{\cH})^{1/2} (A_0 - z I_{\cH})^{-1}	
(A - z I_{\cH})^{1/2}}       \lb{7.22} \\
& \quad = I_{\cH} - \big[(A - z I_{\cH})^{1/2} (A_0 - z I_{\cH})^{-1/2}\big]	
\big[(A^* - {\ol z} I_{\cH})^{1/2} (A_0^* - {\ol z} I_{\cH})^{-1/2}\big]^*   \no
\end{align}
and hence ${\det}_{\cH}\big(\ol{(A - z I_{\cH})^{1/2}(A_0 - z I_{\cH})^{-1}
(A - z I_{\cH})^{1/2}}\big)$ is well-defined.

Next, we consider
\begin{equation}
T_1(z) = (A - z I_{\cH})^{1/2} (A_0 - z I_{\cH})^{-1/2}, \quad
z \in \bbC \backslash \big(\ol{- t_0 + S_{\omega_0}}\big),
\end{equation}
and compute for $\varepsilon \in \bbC\backslash\{0\}$, $|\varepsilon|$
sufficiently small such that
$z, z+\varepsilon \in \bbC \backslash \big(\ol{- t_0 + S_{\omega_0}}\big)$,
\begin{align}
& [T_1(z+\varepsilon) - T_1(z)]     \no \\
& \quad = (A - (z+\varepsilon) I_{\cH}) (A - (z+\varepsilon) I_{\cH})^{-1/2}
(A_0 - (z+\varepsilon) I_{\cH})^{-1/2}    \no \\
& \qquad - (A - z I_{\cH}) (A - z I_{\cH})^{-1/2} (A_0 - z I_{\cH})^{-1/2}    \no \\
& \quad = (A - (z+\varepsilon) I_{\cH}) (A - (z+\varepsilon) I_{\cH})^{-1/2}
(A_0 - (z+\varepsilon) I_{\cH})^{-1/2}    \no \\
& \qquad - (A - z I_{\cH}) (A - (z+\varepsilon) I_{\cH})^{-1/2}
(A_0 - (z+\varepsilon) I_{\cH})^{-1/2}    \no \\
& \qquad + (A - z I_{\cH}) (A - (z+\varepsilon) I_{\cH})^{-1/2}
(A_0 - (z+\varepsilon) I_{\cH})^{-1/2}    \no \\
& \qquad - (A - z I_{\cH}) (A - z I_{\cH})^{-1/2} (A_0 - z I_{\cH})^{-1/2}    \no \\
& \quad = - \varepsilon (A - (z+\varepsilon) I_{\cH})^{-1/2}
(A_0 - (z+\varepsilon) I_{\cH})^{-1/2}     \no \\
& \qquad + (A - z I_{\cH}) (A - (z+\varepsilon) I_{\cH})^{-1/2}
(A_0 - (z+\varepsilon) I_{\cH})^{-1/2}    \no \\
& \qquad - (A - z I_{\cH}) (A - z I_{\cH})^{-1/2}
(A_0 - (z+\varepsilon) I_{\cH})^{-1/2}    \no \\
& \qquad + (A - z I_{\cH}) (A - z I_{\cH})^{-1/2}
(A_0 - (z+\varepsilon) I_{\cH})^{-1/2}    \no \\
& \qquad - (A - z I_{\cH}) (A - z I_{\cH})^{-1/2} (A_0 - z I_{\cH})^{-1/2}    \no \\
& \quad = - \varepsilon (A - (z+\varepsilon) I_{\cH})^{-1/2}
(A_0 - (z+\varepsilon) I_{\cH})^{-1/2}    \no \\
& \qquad + (A - z I_{\cH}) \big[(A - (z+\varepsilon) I_{\cH})^{-1/2} -
(A - z I_{\cH})^{-1/2}\big]     \no \\
& \qquad \quad \times (A_0 - (z+\varepsilon) I_{\cH})^{-1/2}    \lb{7.22a} \\
& \qquad + (A - z I_{\cH}) (A - z I_{\cH})^{-1/2}
\big[(A_0 - (z+\varepsilon) I_{\cH})^{-1/2} -
(A_0 - z I_{\cH})^{-1/2}\big].      \no
\end{align}
Using
\begin{align}
& \big[(A - (z+\varepsilon) I_{\cH})^{-1/2} - (A - z I_{\cH})^{-1/2}\big]   \no \\
& \quad = \f{1}{\pi} \int_0^{\infty} dt\, t^{-1/2}
\big[(A + (t - z - \varepsilon) I_{\cH})^{-1} - (A + (t - z) I_{\cH})^{-1}\big]   \no \\
& \quad = \f{\varepsilon}{\pi} \int_0^{\infty} dt \, t^{-1/2}
(A + (t - z - \varepsilon) I_{\cH})^{-1} (A + (t - z) I_{\cH})^{-1}
\end{align}
in \eqref{7.22a} yields
\begin{align}
& \f{1}{\varepsilon} [T_1(z+\varepsilon) - T_1(z)] =
- (A - (z+\varepsilon) I_{\cH})^{-1/2} (A_0 - (z+\varepsilon) I_{\cH})^{-1/2}     \no \\
& \qquad \quad + (A - z I_{\cH}) \bigg[\f{1}{\pi} \int_0^{\infty} dt \, t^{-1/2}
(A + (t - z - \varepsilon) I_{\cH})^{-1} (A + (t - z) I_{\cH})^{-1}\bigg]    \no \\
& \qquad \qquad \times (A_0 - (z+\varepsilon) I_{\cH})^{-1/2}     \no \\
& \quad \qquad + (A - z I_{\cH}) (A - z I_{\cH})^{-1/2}     \no \\
& \qquad \qquad \times \bigg[\f{1}{\pi} \int_0^{\infty} dt \, t^{-1/2}
(A_0 + (t - z - \varepsilon) I_{\cH})^{-1} (A_0 + (t - z) I_{\cH})^{-1}\bigg]    \no \\
& \quad \underset{\varepsilon \to 0}{\longrightarrow}
- (A - z I_{\cH})^{-1/2} (A_0 - z I_{\cH})^{-1/2}     \no \\
& \qquad \quad + (A - z I_{\cH}) \bigg[\f{1}{\pi} \int_0^{\infty} dt \, t^{-1/2}
(A + (t - z) I_{\cH})^{-2}\bigg]  (A_0 - z I_{\cH})^{-1/2}     \no \\
& \qquad \quad + (A - z I_{\cH}) (A - z I_{\cH})^{-1/2}
\bigg[\f{1}{\pi} \int_0^{\infty} dt \, t^{-1/2} (A_0 + (t - z) I_{\cH})^{-2}\bigg]    \no \\
& \quad = - (A - z I_{\cH})^{-1/2} (A_0 - z I_{\cH})^{-1/2}     \no \\
& \qquad + \f{1}{2} (A - z I_{\cH}) (A - z I_{\cH})^{-3/2} (A_0 - z I_{\cH})^{-1/2}  \no \\
& \qquad + \f{1}{2} (A - z I_{\cH})^{1/2} (A_0 - z I_{\cH})^{-3/2}    \no \\
& \quad = - \f{1}{2} (A - z I_{\cH})^{-1/2} (A_0 - z I_{\cH})^{-1/2}     \no \\
& \qquad + \f{1}{2} (A - z I_{\cH})^{1/2} (A_0 - z I_{\cH})^{-3/2},    \no \\
 \end{align}
where the limit $\varepsilon \to 0$ is valid in the $\cB(\cH)$-norm.
Here we used (cf.\ \eqref{7.14} applied to $\alpha = 3/2$ and $A$ replaced by
$(A - z I_{\cH})$)
\begin{equation}
\f{1}{\pi} \int_0^{\infty} dt \, t^{-1/2} (A + (t - z) I_{\cH})^{-2}
= \f{1}{2} (A - z I_{\cH})^{-3/2}, \quad
z \in \bbC \backslash \big(\ol{- t_0 + S_{\omega_0}}\big).
\end{equation}
Thus,
\begin{align}
& T_1'(z) = - \f{1}{2} (A - z I_{\cH})^{-1/2} (A_0 - z I_{\cH})^{-1/2}
+ \f{1}{2} (A- z I_{\cH})^{1/2} (A_0 - z I_{\cH})^{-3/2},    \no \\
& \hspace*{8cm}  z \in \bbC \backslash \big(\ol{- t_0 + S_{\omega_0}}\big).
\end{align}
Similarly, introducing
\begin{equation}
T_2(z) = \big[(A^* - {\ol z} I_{\cH})^{1/2} (A_0^* - {\ol z} I_{\cH})^{-1/2}\big]^*, \quad
z \in \bbC \backslash \big(\ol{- t_0 + S_{\omega_0}}\big),
\end{equation}
one obtains
\begin{align}
\begin{split}
T_2'(z) &= - \f{1}{2} \big[(A^* - {\ol z} I_{\cH})^{-1/2} (A_0 - z I_{\cH})^{-1/2}\big]^*   \\
& \quad
+ \f{1}{2} \big[(A^* - {\ol z} I_{\cH})^{1/2} (A_0^* - {\ol z} I_{\cH})^{-3/2}\big]^*,
\quad z \in \bbC \backslash \big(\ol{- t_0 + S_{\omega_0}}\big).
\end{split}
\end{align}
Consequently, one computes
\begin{align}
& \f{d}{dz}
\Big[\ol{(A - z I_{\cH})^{1/2}(A_0 - z I_{\cH})^{-1} (A - z I_{\cH})^{1/2}}\Big] \no \\
& \quad = [T_1(z) T_2(z)]' = T_1'(z) T_2(z) + T_1(z) T_2'(z)    \no \\
& \quad = \Big[- \f{1}{2} (A - z I_{\cH})^{-1/2} (A_0 - z I_{\cH})^{-1/2}
+ \f{1}{2} (A - z I_{\cH})^{1/2} (A_0 - z I_{\cH})^{-3/2}\Big]    \no \\
& \qquad \times \big[(A^* - {\ol z} I_{\cH})^{1/2}
(A_0^* - {\ol z} I_{\cH})^{-1/2}\big]^*   \no \\
& \qquad + \big[(A - z I_{\cH})^{1/2} (A_0 - z I_{\cH})^{-1/2}\big]
\Big[- \f{1}{2} \big[(A^* - {\ol z} I_{\cH})^{-1/2} (A_0^* - {\ol z} I_{\cH})^{-1/2}\big]^*   \no \\
& \qquad \quad
+ \f{1}{2} \big[(A^* - {\ol z} I_{\cH})^{1/2} (A_0^* - {\ol z} I_{\cH})^{-3/2}\big]^*\Big]
\no \\
& \quad = - \f{1}{2} \ol{(A - z I_{\cH})^{-1/2} (A_0 - z I_{\cH})^{-1} (A- z I_{\cH})^{1/2}}
\no \\
& \qquad - \f{1}{2} \ol{(A - z I_{\cH})^{1/2} (A_0 - z I_{\cH})^{-1} (A- z I_{\cH})^{-1/2}}
\no \\
& \qquad + \ol{(A - z I_{\cH})^{1/2} (A_0 - z I_{\cH})^{-2} (A- z I_{\cH})^{1/2}},
\quad z \in \bbC \backslash \big(\ol{- t_0 + S_{\omega_0}}\big).    \lb{7.22b}
\end{align}

Due to the $\cB_1(\cH)$-analyticity of the left-hand side of \eqref{7.22}
according to Lemma \ref{l7.7}, one can apply \eqref{7.1}, and using
the result \eqref{7.22b} one finally obtains
\begin{align}
& - \f{d}{dz} \ln\Big({\det}_{\cH}\Big(\ol{(A - z I_{\cH})^{1/2}(A_0 - z I_{\cH})^{-1}
(A - z I_{\cH})^{1/2}}\Big)\Big)    \no \\
& \quad = - {\tr}_{\cH} \Big(\big\{\ol{(A - z I_{\cH})^{1/2}(A_0 - z I_{\cH})^{-1}
(A - z I_{\cH})^{1/2}}\big\}^{-1}    \no \\
& \qquad \qquad \quad \, \times \big[\ol{(A - z I_{\cH})^{1/2}(A_0 - z I_{\cH})^{-1}
(A - z I_{\cH})^{1/2}}\big]'\Big)   \no \\
& \quad = \f{1}{2} {\tr}_{\cH} \Big(\big\{\ol{(A - z I_{\cH})^{1/2}(A_0 - z I_{\cH})^{-1}
(A - z I_{\cH})^{1/2}}\big\}^{-1}    \no \\
& \qquad \qquad \quad \, \times
\big\{\ol{(A - z I_{\cH})^{-1/2}(A_0 - z I_{\cH})^{-1} (A - z I_{\cH})^{1/2}}
\no \\
& \qquad \qquad \qquad \;\;\,
- 2 \ol{(A - z I_{\cH})^{1/2}(A_0 - z I_{\cH})^{-2} (A - z I_{\cH})^{1/2}} \no \\
& \qquad \qquad \qquad \;\;\,
+ \ol{(A - z I_{\cH})^{1/2}(A_0 - z I_{\cH})^{-1} (A - z I_{\cH})^{-1/2}}\big\}\Big) \no \\
& \quad = \f{1}{2} {\tr}_{\cH} \Big(\big\{\ol{(A - z I_{\cH})^{1/2}(A_0 - z I_{\cH})^{-1}
(A - z I_{\cH})^{1/2}}\big\}^{-1}    \no \\
& \qquad \qquad \quad \, \times
(A - z I_{\cH})^{1/2}\big[(A - z I_{\cH})^{-1} - (A_0 - z I_{\cH})^{-1}\big]   \no \\
& \qquad \qquad \quad \, \times \ol{(A_0 - z I_{\cH})^{-1} (A - z I_{\cH})^{1/2}}\Big)
\no \\
& \qquad +
\f{1}{2} {\tr}_{\cH} \Big(\big\{\ol{(A - z I_{\cH})^{1/2}(A_0 - z I_{\cH})^{-1}
(A - z I_{\cH})^{1/2}}\big\}^{-1}    \no \\
& \qquad \qquad \qquad \, \times
(A - z I_{\cH})^{1/2} (A_0 - z I_{\cH})^{-1}    \no \\
& \qquad \qquad \qquad \, \times \ol{\big[(A - z I_{\cH})^{-1} - (A_0 - z I_{\cH})^{-1}\big] (A - z I_{\cH})^{1/2}}\Big) \no \\
& \quad = \f{1}{2} {\tr}_{\cH} \Big(\big\{\ol{(A - z I_{\cH})^{1/2}(A_0 - z I_{\cH})^{-1}
(A - z I_{\cH})^{1/2}}\big\}^{-1}    \no \\
& \qquad \qquad \quad \, \times
(A - z I_{\cH})^{1/2}\big[(A - z I_{\cH})^{-1} - (A_0 - z I_{\cH})^{-1}\big]   \no \\
& \qquad \qquad \quad \, \times (A - z I_{\cH})^{-1/2}
\big\{\ol{(A - z I_{\cH})^{1/2} (A_0 - z I_{\cH})^{-1} (A - z I_{\cH})^{1/2}}\big\}\Big)
\no \\
& \qquad +
\f{1}{2} {\tr}_{\cH} \Big(\big\{\ol{(A - z I_{\cH})^{1/2}(A_0 - z I_{\cH})^{-1}
(A - z I_{\cH})^{1/2}}\big\}^{-1}    \no \\
& \qquad \qquad \qquad \, \times
\big\{\ol{(A - z I_{\cH})^{1/2} (A_0 - z I_{\cH})^{-1} (A - z I_{\cH})^{1/2}}\big\}
(A - z I_{\cH})^{-1/2}     \no \\
& \qquad \qquad \qquad \, \times
\big[\ol{(A - z I_{\cH})^{-1} - (A_0 - z I_{\cH})^{-1}\big]
(A - z I_{\cH})^{1/2}}\Big) \no \\
& \quad = \f{1}{2} {\tr}_{\cH} \Big(
(A - z I_{\cH})^{1/2}\big[(A - z I_{\cH})^{-1} - (A_0 - z I_{\cH})^{-1}\big]
(A - z I_{\cH})^{-1/2}\Big)
\no \\
& \qquad +
\f{1}{2} {\tr}_{\cH} \Big((A - z I_{\cH})^{-1/2}
\ol{\big[(A - z I_{\cH})^{-1} - (A_0 - z I_{\cH})^{-1}\big]
(A - z I_{\cH})^{1/2}}\Big) \no \\
& \quad = \f{1}{2} {\tr}_{\cH} \big((A - z I_{\cH})^{-1} - (A_0 - z I_{\cH})^{-1}\big)
+ \f{1}{2} {\tr}_{\cH} \big((A - z I_{\cH})^{-1} - (A_0 - z I_{\cH})^{-1}\big) \no \\
& \quad = {\tr}_{\cH} \big((A - z I_{\cH})^{-1} - (A_0 - z I_{\cH})^{-1}\big),
\quad z \in \bbC \backslash \big(\ol{- t_0 + S_{\omega_0}}\big).  \lb{7.23}
\end{align}
Here we repeatedly used cyclicity of the trace.
\end{proof}

\begin{remark} \lb{r7.9} 
$(i)$ Extensions of the standard perturbation determinant
\begin{equation}
{\det}_{\cH}\big((A - z I_{\cH})(A_0 - z I_{\cH})^{-1}\big)
= {\det}_{\cH}\big(I_{\cH} + (A - A_0)(A_0 - z I_{\cH})^{-1}\big)
\end{equation}
to certain symmetrized (sometimes called, modified) versions involving
factorizations of $A-A_0$ have been considered in \cite{KY81} and
\cite[Sect.\ 8.1.4]{Ya92}. However, Theorem \ref{t7.8} appears to be of a
more general nature and of independent interest. \\
$(ii)$ We emphasize the general nature of the hypotheses on $A, A_0$ 
in Theorem \ref{t7.8}. In particular, it covers the frequently encountered special case of self-adjoint operators $A, A_0$ with $A_0$ bounded 
from below and $A$ a quadratic form perturbation of $A_0$ with relative bound strictly less than one, in addition to the trace class requirement \eqref{7.20}. In this case one has  
$\dom\big(|A_0|^{1/2}\big) = \dom\big(|A|^{1/2}\big)$. 
Actually, Theorem \ref{t7.8} permits the more general situation where the latter equality of form domains is replaced by 
$\dom\big(|A_0|^{1/2}\big) \subseteq \dom\big(|A|^{1/2}\big)$. The latter  fact will have to be used in our application to one-dimensional 
Schr\"odinger operators on a compact interval in Section \ref{s8} in the case where the separated boundary conditions involve a Dirichlet 
boundary condition at one or both interval endpoints. \\
$(iii)$ Going beyond item $(VIII)$, we also note that Theorem \ref{t7.8} applies when $A, A_0$ are (Dunford) spectral operators of scalar type 
\cite[Ch.\ XVIII]{DS88} (in the sense that their resolvent is similar to the resolvent of a self-adjoint operator) with real spectrum bounded from 
below.
\end{remark}

\section{Boundary Data Maps and Their Basic Properties}   \label{s2}

This section is devoted to a brief review of boundary data maps as recently
introduced in \cite{CGM10}. The results taken from \cite{CGM10} are presented
without proof (for detailed proofs and for an extensive bibliography we refer to
\cite{CGM10}). We will also present a few new results of boundary data maps in this section (and then of course supply proofs).

Taking $R>0$, and fixing $\te_0, \te_R\in S_{2 \pi}$, with $S_{2 \pi}$ the strip
\begin{equation}
S_{2 \pi}=\{z\in\bbC\,|\, 0\leq \Re(z) < 2 \pi\},
\end{equation}
we introduce the linear map $\gate$, the trace map associated with the boundary
$\{0,R\}$ of $(0,R)$ and the parameters $\te_0, \te_R$, by
\begin{equation}\label{2.2}
\gate \colon \begin{cases}
C^1({[0,R]}) \rightarrow \bbC^2, \\
u \mapsto \begin{bmatrix} \cos(\te_0)u(0) + \sin(\te_0)u'(0)\\
\cos(\te_R)u(R) - \sin(\te_R)u'(R) \end{bmatrix}, \end{cases}
\quad  \te_0, \te_R\in S_{2 \pi},
\end{equation}
where ``prime'' denotes $d/dx$. We note, in particular, that the Dirichlet trace
$\gamma_D$, and the Neumnann trace $\gamma_N$ (in connection with the outward pointing unit normal vector at $\partial (0,R) = \{0, R\}$), are given by
\begin{equation}
\gamma_D = \gamma_{0,0} = - \gamma_{\pi,\pi}, \quad
\gamma_N = \gamma_{3\pi/2,3\pi/2} = - \gamma_{\pi/2,\pi/2}.   \lb{2.2AA}
\end{equation}

Next, assuming
\begin{equation}
V\in L^1((0,R); dx),    \lb{2.2aa}
\end{equation}
we introduce the following family of densely defined closed linear operators
$\Hte$ in $L^2((0,R); dx)$,
\begin{align}
& \Hte f= -f'' + Vf,  \quad \te_0, \te_R\in S_{2 \pi},     \no   \\
& f\in\dom(\Hte)=\big\{ g \in L^2((0,R); dx)\,\big|\, g, g'\in AC({[0,R]}); \, \gate (g)=0;
\label{2.2a} \\
& \hspace*{6.75cm} (-g''+Vg)\in L^2((0,R); dx)\big\}.   \no
\end{align}
Here $AC([0,R])$ denotes the set of absolutely continuous functions on $[0,R]$.
We remark that $V$ is not assumed to be real-valued in this section.
It is well-known that the spectrum of $\Hte$, $\sigma (\Hte)$ is purely discrete,
\begin{equation}
\sigma (\Hte) = \sigma_{\rm d}(\Hte), \quad \te_0, \te_R\in S_{2 \pi}.
\end{equation}
In addition, the resolvent of $\Hte$ is a Hilbert--Schmidt operator in $L^2((0,R); dx)$
and the eigenvalues $E_{\te_0,\te_R,n}$ of
$\Hte$, in the case of the separated boundary conditions at hand, are of the form
\begin{equation}
E_{\te_0,\te_R,n} \underset{n\to\infty}{=}  [(n\pi/R) + (a_n/n)]^2 \,
\text{ with $a_n\in \ell^\infty(\bbN)$},
\end{equation}
as shown in \cite[Lemma 1.3.3]{Ma86}. Moreover, $\Hte$ is known
to be $m$-sectorial (cf.\ \cite[Sect.\ III.6]{EE89}, \cite[Sect.\ VI.2.4]{Ka80}).

One notices that
\begin{equation}
\gamma_{(\theta_0 + \pi) \, \text{\rm mod} (2\pi), (\theta_R + \pi) \, \text{\rm mod} (2\pi)}
= - \gate,  \quad  \te_0, \te_R\in S_{2 \pi},     \lb{2.2A}
\end{equation}
and, on the other hand,
\begin{equation}
H_{(\theta_0 + \pi) \, \text{\rm mod} (2\pi), (\theta_R + \pi) \, \text{\rm mod} (2\pi)} = \Hte,
\quad  \te_0, \te_R\in S_{2 \pi},     \lb{2.2B}
\end{equation}
hence it suffices to consider $\te_0, \te_R\in S_{\pi}=\{z\in\bbC\,|\, 0\leq \Re(z) < \pi\}$ rather than $\te_0, \te_R\in S_{2 \pi}$ in connection with $\Hte$, but for simplicity of notation we will keep using the strip $S_{2 \pi}$ throughout
this manuscript.

The adjoint of $\Hte$ is given by
\begin{align}
& (\Hte)^* f= -f'' + {\ol V} f,  \quad \te_0, \te_R\in S_{2 \pi},     \no   \\
& f\in\dom\big((\Hte)^*\big)=\big\{ g \in L^2((0,R); dx)\,\big|\, g, g'\in AC({[0,R]});
\, \gamma_{\ol{\theta_0},\ol{\theta_R}} (g)=0;  \no \\
& \hspace*{6.2cm}(-g''+{\ol V}g)\in L^2((0,R); dx)\big\}.
\end{align}

Having described the operator $\Hte$ is some detail, still assuming \eqref{2.2aa},
we now briefly recall the corresponding closed, sectorial, and densely defined
sequilinear form, denoted by $Q_{\Hte}$, associated with $\Hte$ (cf.\
\cite[p.\ 312, 321, 327--328]{Ka80}):
\begin{align}
& Q_{\Hte}(f,g) = \int_0^R dx \big[\ol{f'(x)} g'(x) + V(x) \ol{f(x)} g(x)\big]   \no \\
& \hspace*{2.35cm} - \cot(\te_0) \ol{f(0)} g(0) -  \cot(\te_R) \ol{f(R)} g(R), \lb{2.2f} \\
& f, g \in \dom(Q_{\Hte}) = H^1((0,R))  \no \\
& \quad = \big\{h\in L^2((0,R); dx) \,|\,
h \in AC ([0,R]); \, h' \in L^2((0,R); dx)\big\},   \no \\
& \hspace*{6.5cm}  \te_0, \te_R \in S_{2 \pi}\backslash\{0,\pi\},   \no \\
& Q_{H_{0,\te_R}}(f,g) = \int_0^R dx \big[\ol{f'(x)} g'(x) + V(x) \ol{f(x)} g(x)\big]
-  \cot(\te_R) \ol{f(R)} g(R),  \lb{2.2g} \\
&  f, g \in \dom(Q_{H_{0,\te_R}})   \no \\
& \quad = \big\{h\in L^2((0,R); dx) \,|\,
h \in AC ([0,R]); \, h(0) =0; \, h' \in L^2((0,R); dx)\big\},   \no \\
& \hspace*{8.5cm}  \te_R \in S_{2 \pi}\backslash\{0,\pi\},   \no \\
& Q_{H_{\te_0,0}}(f,g) = \int_0^R dx \big[\ol{f'(x)} g'(x) + V(x) \ol{f(x)} g(x)\big]
-  \cot(\te_0) \ol{f(0)} g(0),  \lb{2.2h} \\
&  f, g \in \dom(Q_{H_{\te_0,0}})   \no \\
& \quad = \big\{h\in L^2((0,R); dx) \,|\,
h \in AC ([0,R]); \, h(R) =0; \, h' \in L^2((0,R); dx)\big\},   \no \\
& \hspace*{8.67cm}  \te_0 \in S_{2 \pi}\backslash\{0,\pi\},   \no \\
& Q_{H_{0,0}}(f,g) = \int_0^R dx \big[\ol{f'(x)} g'(x) + V(x) \ol{f(x)} g(x)\big],  \lb{2.2i} \\
& f, g \in \dom(Q_{H_{0,0}}) = \dom\big(|H_{0,0}|^{1/2}\big) = H^1_0((0,R))  \no \\
& \quad = \big\{h\in L^2((0,R); dx) \,|\,
h \in AC([0,R]); \, h(0)=0, \, h(R) =0;   \no \\
& \hspace*{6.65cm}   h' \in L^2((0,R); dx)\big\}.   \no
\end{align}

Next, we recall the following elementary, yet fundamental, fact:

\begin{lemma} \lb{l2.2}
Suppose that $V\in L^1((0,R); dx)$, fix $\te_0, \te_R\in S_{2 \pi}$, and assume
that $z\in\CR$. Then the boundary value problem given by
\begin{align}
-&u'' + Vu=zu,\quad u, u'\in AC([0,R]),   \label{2.3}\\
&\gate(u)=\begin{bmatrix}c_0\\ c_R \end{bmatrix}\in\bbC^2,    \label{2.4}
\end{align}
has a unique solution $u(z,\cdot)=u(z,\cdot\,;(\te_0,c_0),(\te_R,c_R))$ for each $c_0, c_R\in\bbC$.
\end{lemma}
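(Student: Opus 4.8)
The plan is to reduce the inhomogeneous boundary value problem \eqref{2.3}, \eqref{2.4} to an invertibility question for a single $2 \times 2$ matrix acting on the (two-dimensional) solution space of the differential equation, and then to identify the nonvanishing of its determinant with the hypothesis $z \in \CR$.

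First I would invoke the standard Carath\'eodory existence and uniqueness theory for the initial value problem associated with $-u'' + Vu = zu$: since $V \in L^1((0,R); dx)$, prescribing $u(0)$ and $u'(0)$ determines a unique solution with $u, u' \in AC([0,R])$, and consequently the solutions of \eqref{2.3} form a two-dimensional complex vector space. Fix a fundamental system $\phi(z,\cdot), \psi(z,\cdot)$ of \eqref{2.3}, say normalized by initial data at $x = 0$. Every solution of \eqref{2.3} is then of the form $u = \alpha \phi(z,\cdot) + \beta \psi(z,\cdot)$ with $\alpha, \beta \in \bbC$, and since $\gate$ is linear, the boundary condition \eqref{2.4} is equivalent to the linear system
\[
M(z) \begin{bmatrix} \alpha \\ \beta \end{bmatrix} = \begin{bmatrix} c_0 \\ c_R \end{bmatrix}, \qquad
M(z) = \big[\, \gate(\phi(z,\cdot)) \ \ \gate(\psi(z,\cdot)) \,\big] \in \bbC^{2 \times 2},
\]
whose two columns are the $\bbC^2$-vectors $\gate(\phi(z,\cdot))$ and $\gate(\psi(z,\cdot))$. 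Existence and uniqueness of a solution of \eqref{2.3}, \eqref{2.4} for every right-hand side $(c_0, c_R)$ is thus equivalent to the invertibility of $M(z)$, that is, to ${\det}_{\bbC^2}(M(z)) \neq 0$.

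Next I would translate the singularity of $M(z)$ into a spectral statement. The matrix $M(z)$ fails to be invertible precisely when there exists $(\alpha, \beta) \neq (0,0)$ with $M(z)(\alpha, \beta)^\top = 0$, equivalently when \eqref{2.3} admits a nontrivial solution $u$ with $\gate(u) = 0$. The key observation is that any such $u$ automatically lies in $\dom(\Hte)$: it solves $-u'' + Vu = zu$ with $u, u' \in AC([0,R])$, it satisfies the homogeneous boundary condition $\gate(u) = 0$, and, being continuous on the compact interval $[0,R]$, both $u$ and $-u'' + Vu$ (equal to $zu$) lie in $L^2((0,R); dx)$. Hence a nontrivial solution of the homogeneous boundary value problem is exactly an eigenfunction of $\Hte$ for the eigenvalue $z$, so that ${\det}_{\bbC^2}(M(z)) = 0$ if and only if $z \in \sigma_{\rm d}(\Hte) = \sigma(\Hte)$, where I use the previously recorded fact that the spectrum of $\Hte$ is purely discrete.

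Finally, since by hypothesis $z \in \CR$, the matrix $M(z)$ is invertible, and \eqref{2.3}, \eqref{2.4} has the unique solution $u(z,\cdot) = \alpha \phi(z,\cdot) + \beta \psi(z,\cdot)$ with $(\alpha, \beta)^\top = M(z)^{-1} (c_0, c_R)^\top$. I expect the only genuinely delicate step to be the clean identification of nontrivial homogeneous solutions of \eqref{2.3} (satisfying $\gate(u) = 0$) with eigenfunctions of $\Hte$ in both directions, so that the abstract spectral hypothesis $z \notin \sigma(\Hte)$ can be converted into the concrete, computable condition ${\det}_{\bbC^2}(M(z)) \neq 0$; everything else is the standard linear algebra of a two-point boundary value problem.
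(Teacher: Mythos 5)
Your proof is correct and takes essentially the same route as the paper: Lemma \ref{l2.2} is recalled there without proof from \cite{CGM10}, but the machinery the paper develops immediately afterwards---the matrix $\cU(z,R,\te_0,\te_R) = \begin{bmatrix} \gate(\theta) & \gate(\phi) \end{bmatrix}$ of \eqref{2.2ca}, its determinant $\Delta(z,R,\te_0,\te_R)$ of \eqref{2.2det}, and the equivalence that $z_0$ is an eigenvalue of $\Hte$ if and only if $\Delta(z_0,R,\te_0,\te_R)=0$---is precisely your matrix $M(z)$ argument. Your two-way identification of nontrivial homogeneous solutions with eigenfunctions of $\Hte$ (noting $u \in C([0,R]) \subset L^2((0,R);dx)$, so $u$ and $-u''+Vu = zu$ lie in $L^2$ and hence $u \in \dom(\Hte)$) is exactly the right closing step; in fact the appeal to pure discreteness of $\sigma(\Hte)$ is dispensable, since $z \in \CR$ already rules out $z$ being an eigenvalue.
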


Assuming $z\in\rho(\Hte)$, a basis for the solutions of
\eqref{2.3} is given by
\begin{equation}\label{2.5}
\begin{split}
u_{-,\te_0}(z,\cdot)&=u(z,\cdot \, ;(\te_0,0),(0,1)),    \\
u_{+,\te_R}(z,\cdot)&=u(z,\cdot \, ;(0,1),(\te_R,0)).
\end{split}
\end{equation}
Explicitly, one then has
\begin{align}
u_{-,\te_0}(z,R)&= 1, \quad
\cos(\te_0)u_{-,\te_0}(z,0) + \sin(\te_0)u_{-,\te_0}'(z,0) =0,    \label{2.5a}\\
u_{+,\te_R}(z,0)&= 1, \quad
\cos(\te_R)u_{+,\te_R}(z,R) - \sin(\te_R)u_{+,\te_R}'(z,R) =0.     \label{2.5b}
\end{align}
Recalling the Wronskian of two functions $f$ and $g$,
\begin{equation}
W(f,g)(x) = f(x)g'(x) - f'(x)g(x), \quad f, g \in C^1([0,R]),   \lb{2.5bA}
\end{equation}
one then computes
\begin{align}
W(u_{+,\te_R}(z,\cdot), u_{-,\te_0}(z,\cdot))
& = u'_{-,\te_0}(z,0)- u'_{+,\te_R}(z,0) u_{-,\te_0}(z,0)     \lb{2.5c} \\
& = u_{+,\te_R}(z,R) u'_{-,\te_0}(z,R)- u'_{+,\te_R}(z,R).   \lb{2.5d}
\end{align}

To each boundary value problem \eqref{2.3}, \eqref{2.4}, we now associate a family of \emph{general boundary data maps}, $\Lates (z) : \bbC^2 \rightarrow \bbC^2$, for
$\te_0, \te_R, \te_0^{\prime},\te_R^{\prime}\in S_{2 \pi}$,  where
\begin{align}\label{2.6}
\begin{split}
\Lates (z) \begin{bmatrix}c_0\\ c_R \end{bmatrix} &=
\Lates (z) \big(\gate(u(z,\cdot\ ;(\te_0,c_0),(\te_R,c_R)))\big)   \\
&= \gates(u(z,\cdot\ ;(\te_0,c_0),(\te_R,c_R))).
\end{split}
\end{align}
With $u(z,\cdot)=u(z,\cdot\ ;(\te_0,c_0),(\te_R,c_R))$, then $\Lates (z) $ can be represented as a $2\times 2$ complex matrix, where
\begin{align}\label{2.7}
\Lates (z) \begin{bmatrix}c_0\\ c_R \end{bmatrix} &=
\Lates (z) \begin{bmatrix} \cos(\te_0)u(z,0) + \sin(\te_0)u'(z,0)\\[1mm]
\cos(\te_R)u(z,R) - \sin(\te_R)u'(z,R) \end{bmatrix}  \no \\
&=
\begin{bmatrix} \cos(\te_0^{\prime})u(z,0) + \sin(\te_0^{\prime})u'(z,0)\\[1mm]
\cos(\te_R^{\prime})u(z,R) - \sin(\te_R^{\prime})u'(z,R) \end{bmatrix}.
\end{align}

One can show that $\Lates$ is well-defined for
$z\in\rho(\Hte)$, that is, it is invariant with respect to a change of basis of solutions of \eqref{2.3} (cf.\ \cite[Theorem\ 2.3]{CGM10}). Moreover, one has the following facts:
Let $\te_0,\te_R, \te_0^{\prime},\te_R^{\prime}, \te_0^{\prime\prime}, \te_R^{\prime\prime}\in S_{2 \pi}$. Then, with $I_2$ denoting the identity matrix in $\bbC^2$,
\begin{align}
& \La_{\te_0,\te_R}^{\te_0,\te_R} (z) = I_2,   \quad
z\in\rho(\Hte),    \lb{2.7a}  \\
& \La_{\te_0^{\prime},\te_R^{\prime}}^{\te_0^{\prime\prime},\te_R^{\prime\prime}} (z)
\Lates (z) =\La_{\te_0,\te_R}^{\te_0^{\prime\prime},\te_R^{\prime\prime}} (z) , \quad
z\in\rho(\Hte)\cap\rho(H_{\theta'_0,\theta'_R}),
\lb{2.7b} \\
& \La_{\te_0^{\prime},\te_R^{\prime}}^{\te_0,\te_R} (z) = \Big[\Lates (z)\Big]^{-1},
\quad
z\in\rho(\Hte)\cap\rho(H_{\theta'_0,\theta'_R}).
\lb{2.7c}
\end{align}

\begin{remark} \lb{r2.5}
Even though $\Lates$ is invariant with respect to a change of basis for the solutions of \eqref{2.3}, the representation of $\Lates$ with respect to a specific basis can be simplified considerably with an appropriate choice of basis. For example, by choosing the basis given in \eqref{2.5}, and by letting
$\psi_1(z,\cdot)=u_{+,\te_R}(z,\cdot)=u(z,\cdot \, ;(0,1),(\te_R,0))$, and
$\psi_2(z,\cdot)=u_{-,\te_0}(z,\cdot)=u(z,\cdot \, ;(\te_0,0),(0,1))$, one obtains using
this basis,
\begin{align}
& \Lates (z) = \Big[\Big(\Lates (z)\Big)_{j,k}\Big]_{1 \leq j,k \leq 2},
\quad z\in\rho(\Hte),    \label{2.20} \\[1mm]
\begin{split}
& \Big(\Lates (z)\Big)_{1,1} = \frac{\cos(\te_0^{\prime})
+ \sin(\te_0^{\prime})u_{+,\te_R}'(z,0)}{\cos(\te_0) + \sin(\te_0)u_{+,\te_R}'(z,0)},  \\
& \Big(\Lates (z)\Big)_{1,2} = \frac{\cos(\te_0^{\prime})u_{-,\te_0}(z,0) + \sin(\te_0^{\prime})u_{-,\te_0}'(z,0)}{\cos(\te_R) - \sin(\te_R)u_{-,\te_0}'(z,R)},    \\
& \Big(\Lates (z)\Big)_{2,1} = \frac{\cos(\te_R^{\prime})u_{+,\te_R}(z,R)
- \sin(\te_R^{\prime})u_{+,\te_R}'(z,R)}{\cos(\te_0) + \sin(\te_0)u_{+,\te_R}'(z,0)},   \\
& \Big(\Lates (z)\Big)_{2,2} = \frac{\cos(\te_R^{\prime})
- \sin(\te_R^{\prime})u_{-,\te_0}'(z,R)}{\cos(\te_R) - \sin(\te_R)u_{-,\te_0}'(z,R)}.
\lb{2.20a}
\end{split}
\end{align}
In particular, by \eqref{2.5a} and \eqref{2.5b},
\begin{equation}
\Big(\Lambda_{\te_0,\te_R}^{\te_0,\te_R'} (z)\Big)_{1,2} = 0, \quad
\Big(\Lambda_{\te_0,\te_R}^{\te_0',\te_R} (z)\Big)_{2,1} = 0.      \lb{2.30}
\end{equation}
\end{remark}

\begin{remark} \lb{r2.6}
We note that $\Lazzqq(z)$ represents the \emph{Dirichlet-to-Neumann map},
$\Lambda_{D,N}(z)$, for the boundary value problem \eqref{2.3}, \eqref{2.4}; that is,  when
$\te_0=\te_R=0$, $\te_0^{\prime}=\te_R^{\prime}=\pi/2$, then \eqref{2.7} becomes
\begin{equation}
\Lambda_{D,N}(z) \begin{bmatrix}u(z,0)\\u(z,R)\end{bmatrix}=
\Lazzqq( z) \begin{bmatrix}u(z,0)\\u(z,R)\end{bmatrix}=
\lbrac\begin{array}{r}u'(z,0)\\-u'(z,R)\end{array}\rbrac, \quad z\in\rho(\Hte),
\end{equation}
with $u(z,\cdot)=u(z,\cdot \, ;(0,c_0),(0,c_R))$, $u(z,0)=c_0$, $u(z,R)=c_R$. The Dirichlet-to-Neumann map in the case $V=0$ has recently been considered in
\cite[Example 5.1]{Po08}. The Neumann-to-Dirichlet map $\Lambda_{N,D}(z)
= \Lambda_{\pi/2,\pi/2}^{\pi,\pi} (z) = - [\Lambda_{D,N}(z)]^{-1}$
in the case $V=0$
has earlier been computed in \cite[Example\ 4.1]{DM92}. We also refer to
\cite{BMN08}, \cite{BGP08}, \cite{DM91} in the intimately related
context of $Q$ and $M$-functions.
\end{remark}

We continue with an elementary result needed in the proof of
Lemma \ref{l2.8}, but first we introduce a convenient basis of solutions
associated with the Schr\"odinger equation \eqref{2.3}: Fix $z\in\bbC$ and
let $\theta(z,\cdot), \theta^{\prime}(z,\cdot), \phi(z,\cdot),
\phi^{\prime}(z,\cdot)\in AC([0,R])$, and such that $\theta (z,\cdot)$ and
$\phi (z,\cdot)$ are solutions of
\begin{equation}
-u'' +V u = z u,    \lb{2.32a}
\end{equation}
uniquely determined by their initial values at $x=0$,
\begin{equation}
\theta (z,0)=\phi^{\prime}(z,0)=1, \quad \theta^{\prime}(z,0)=\phi (z,0)=0.  \lb{2.2b}
\end{equation}
In particular, $\theta (z,\cdot)$  and $\phi (z,\cdot)$ are entire with respect to $z$.
Introducing
\begin{equation}
\psi(z,\cdot) = A \theta (z,\cdot) + B \phi (z,\cdot), \quad A, B \in \bbC, \lb{2.2bb}
\end{equation}
it follows that
\begin{equation}
\gate(\psi)
= \begin{bmatrix} \cos(\te_0) \psi(z,0) + \sin(\te_0) \psi'(z,0) \\[1mm]
\cos(\te_R) \psi(z,R) - \sin(\te_R) \psi'(z,R) \end{bmatrix} = 0 \in \bbC^2  \lb{2.2c}
\end{equation}
is equivalent to
\begin{align}
0 &= \begin{bmatrix}  \cos(\te_0) &  \sin(\te_0) \\[1mm]
\cos(\te_R) \theta (z,R) - \sin(\te_R) \theta' (z,R) &
\cos(\te_R) \phi(z,R) - \sin(\te_R) \phi'(z,R) \end{bmatrix}
\begin{bmatrix} A \\[1mm]  B \end{bmatrix}   \no \\
& = \cU(z, R, \te_0, \te_R) \begin{bmatrix} A \\ B \end{bmatrix}.    \lb{2.2ca}
\end{align}
Consequently, introducing the determinant $\Delta$ defined by
\begin{equation}
\Delta(z,R,\te_0,\te_R) =\det\big(\cU(z, R, \te_0, \te_R)\big),    \lb{2.2det}
\end{equation}
one concludes that
\begin{align}
\text{$z_0$ is an eigenvalue of $\Hte$}  \Longleftrightarrow
\text{$z_0$ is a zero of the determinant $\Delta(\cdot,R,\te_0,\te_R)$.}
\end{align}
Moreover, $\Delta$ is an entire function with respect to $z$, and an explicit computation reveals that
\begin{align}
\begin{split}
\Delta(z,R,\te_0,\te_R)&=\cos(\te_0)\cos(\te_R)\phi (z,R)
- \cos(\te_0)\sin(\te_R) \phi^{\prime} (z,R)  \\
& \quad - \sin(\te_0)\cos(\te_R) \theta (z,R) + \sin(\te_0)\sin(\te_R) \theta^{\prime} (z,R).   \lb{2.2d}
\end{split}
\end{align}
In addition, we point out that the function $\De$ is closely related to the usual Wronskian of two solution $u_{\pm,\te_0,\te_R}$ of \eqref{2.32a} satisfying the boundary conditions
\begin{align}
\cos(\te_0)u_{+,\te_0,\te_R}(z,0) + \sin(\te_0)u_{+,\te_0,\te_R}'(z,0)&=1, \\
\cos(\te_R)u_{+,\te_0,\te_R}(z,R) - \sin(\te_R)u_{+,\te_0,\te_R}'(z,R)&=0, \\
\cos(\te_0)u_{-,\te_0,\te_R}(z,0) + \sin(\te_0)u_{-,\te_0,\te_R}'(z,0)&=0, \\
\cos(\te_R)u_{-,\te_0,\te_R}(z,R) - \sin(\te_R)u_{-,\te_0,\te_R}'(z,R)&=1.
\end{align}
In vector form, these boundary conditions correspond to
\begin{align}
\begin{bmatrix} \ga_{\te_0,\te_R}(u_{+,\te_0,\te_R}) & \ga_{\te_0,\te_R}(u_{-,\te_0,\te_R}) \end{bmatrix} = I_2. \lb{2.2da}
\end{align}
Since $\ga_{\te_0,\te_R}$ is a linear map, it follows from \eqref{2.2bb} that
\begin{align}
\ga_{\te_0,\te_R}(\psi) =
\begin{bmatrix} \ga_{\te_0,\te_R}(\te) & \ga_{\te_0,\te_R}(\phi) \end{bmatrix}
\begin{bmatrix} A \\ B \end{bmatrix},
\end{align}
hence by \eqref{2.2ca}
\begin{align}
\cU(z, R, \te_0, \te_R) = \begin{bmatrix} \ga_{\te_0,\te_R}(\te) & \ga_{\te_0,\te_R}(\phi) \end{bmatrix}. \lb{2.2db}
\end{align}
Using \eqref{2.2da} and the linearity of $\ga_{\te_0,\te_R}$ once again one concludes that
\begin{align}
\begin{bmatrix} u_{+,\te_0,\te_R}(z,x) & u_{-,\te_0,\te_R}(z,x) \end{bmatrix} =
\begin{bmatrix} \te(z,x) & \phi(z,x) \end{bmatrix}
\begin{bmatrix} \ga_{\te_0,\te_R}(\te) & \ga_{\te_0,\te_R}(\phi) \end{bmatrix}^{-1} \lb{2.2dc}
\end{align}
since both sides solve \eqref{2.32a} and satisfy the same boundary condition. Thus, \eqref{2.2db} and \eqref{2.2dc} yield
\begin{align}
W(u_{+,\te_0,\te_R}(z,\cdot), u_{-,\te_0,\te_R}(z,\cdot)) &= W(\te(z,\cdot),\phi(z,\cdot))\det(\cU(z, R, \te_0, \te_R)^{-1}) \no \\
&= \Delta(z,R,\te_0,\te_R)^{-1}, \lb{2.2dd}
\end{align}
where $W(\cdot,\cdot)$ denotes the Wronskian of two functions as introduced in \eqref{2.5bA}.

\begin{lemma} \lb{l2.8}
Assume that $\te_0, \te_R, \te_0', \te_R' \in S_{2 \pi}$, and let $\Hte$ and
$H_{\theta_0',\theta_R'}$ be defined as in \eqref{2.2a}. Then, with
$\Delta(\cdot,R,\te_0,\te_R)$ introduced in \eqref{2.2det},
\begin{align}
{\det}_{\bbC^2} \Big(\Lates (z)\Big) =
\f{\Delta(z,R,\te_0',\te_R')}{\Delta(z,R,\te_0,\te_R)}, \quad
z\in\rho(\Hte).      \lb{2.43}
\end{align}
\end{lemma}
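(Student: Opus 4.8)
The plan is to exploit the fundamental system $\theta(z,\cdot), \phi(z,\cdot)$ introduced in \eqref{2.2b} together with the matrix $\cU(z,R,\te_0,\te_R)$ of \eqref{2.2db} to produce an explicit $2\times 2$ matrix factorization of $\Lates(z)$, from which the determinant formula falls out by multiplicativity. First I would fix $z\in\rho(\Hte)$ and write an arbitrary solution of \eqref{2.32a} as $\psi = A\theta(z,\cdot) + B\phi(z,\cdot)$ with $A,B\in\bbC$, exactly as in \eqref{2.2bb}. By linearity of the boundary trace map and the computation recorded in \eqref{2.2ca}--\eqref{2.2db}, applying $\gate$ gives $\gate(\psi) = \cU(z,R,\te_0,\te_R)\begin{bmatrix}A\\B\end{bmatrix}$, while applying the primed trace map gives, by exactly the same reasoning with $(\te_0',\te_R')$ in place of $(\te_0,\te_R)$, the relation $\gates(\psi) = \cU(z,R,\te_0',\te_R')\begin{bmatrix}A\\B\end{bmatrix}$. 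Thus the coefficient vector $\begin{bmatrix}A\\B\end{bmatrix}$ is the common intermediary linking the two boundary data.

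Next I would verify that $\cU(z,R,\te_0,\te_R)$ is invertible for $z\in\rho(\Hte)$. By the definition \eqref{2.2det} one has $\det\big(\cU(z,R,\te_0,\te_R)\big) = \Delta(z,R,\te_0,\te_R)$, and the correspondence established just before the statement shows that $z_0$ is an eigenvalue of $\Hte$ if and only if $\Delta(z_0,R,\te_0,\te_R)=0$. Since $\sigma(\Hte)=\sigma_{\rm d}(\Hte)$, the assumption $z\in\rho(\Hte)$ forces $\Delta(z,R,\te_0,\te_R)\neq 0$, so $\cU(z,R,\te_0,\te_R)^{-1}$ exists.

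With invertibility in hand, I would solve $\begin{bmatrix}A\\B\end{bmatrix} = \cU(z,R,\te_0,\te_R)^{-1}\gate(\psi)$ and substitute into the expression for $\gates(\psi)$, obtaining $\gates(\psi) = \cU(z,R,\te_0',\te_R')\,\cU(z,R,\te_0,\te_R)^{-1}\gate(\psi)$. Comparing this with the defining relation \eqref{2.6} of the boundary data map, namely $\Lates(z)\gate(\psi)=\gates(\psi)$, and using that every boundary datum in $\bbC^2$ is attained by a (unique) solution by Lemma \ref{l2.2}, I read off the matrix representation $\Lates(z) = \cU(z,R,\te_0',\te_R')\,\cU(z,R,\te_0,\te_R)^{-1}$. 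Taking determinants and invoking multiplicativity of the determinant together with \eqref{2.2det} then yields \eqref{2.43}. The computation is essentially forced once this factorization is recognized; the only points genuinely needing care are the surjectivity and uniqueness used to identify $\Lates(z)$ with the matrix product (supplied by Lemma \ref{l2.2} and the basis-independence already noted in the text) and the invertibility of $\cU(z,R,\te_0,\te_R)$ on $\rho(\Hte)$ (supplied by the eigenvalue--zero correspondence for $\Delta$), so I expect no substantial analytic obstacle here.
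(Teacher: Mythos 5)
Your argument is correct and follows essentially the same route as the paper: the proof there simply cites the factorization $\Lates(z)=\big[\gamma_{\te_0',\te_R'}(\theta)\;\gamma_{\te_0',\te_R'}(\phi)\big]\big[\gamma_{\te_0,\te_R}(\theta)\;\gamma_{\te_0,\te_R}(\phi)\big]^{-1}$ (eq.\ \eqref{2.44}, from the proof of Theorem~2.3 in \cite{CGM10}), identifies the right factor with $\cU(z,R,\te_0,\te_R)$ as in \eqref{2.45}, and applies multiplicativity of the determinant. The only difference is that you rederive \eqref{2.44} from the expansion $\psi=A\theta+B\phi$ and the linearity of the trace maps instead of citing it, and your justification of the invertibility of $\cU(z,R,\te_0,\te_R)$ on $\rho(\Hte)$ via the zeros of $\Delta$ is exactly the eigenvalue correspondence recorded before the lemma, so nothing is missing.
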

\begin{proof}
We recall the formula
\begin{equation}\label{2.44}
\Lates (z) = \begin{bmatrix}\ga_{\te_0^{\prime},\te_R^{\prime}}(\te(z,\cdot)) &
\ga_{\te_0^{\prime},\te_R^{\prime}}(\phi(z,\cdot))\end{bmatrix}
\begin{bmatrix}\ga_{\te_0,\te_R}(\te(z,\cdot)) &\ga_{\te_0,\te_R}(\phi(z,\cdot)
\end{bmatrix}^{-1},
\end{equation}
established in the proof of Theorem\ 2.3 in \cite{CGM10}. Since
\begin{align}
& \begin{bmatrix}\ga_{\te_0,\te_R}(\te(z,\cdot)) &\ga_{\te_0,\te_R}(\phi(z,\cdot)
\end{bmatrix}   \lb{2.45} \\
& \quad = \begin{pmatrix} \cos(\te_0) & \sin(\te_0) \\
\cos(\te_R) \te(z,R) - \sin(\te_R) \te'(z,R) & \cos(\te_R) \phi(z,R)
- \sin(\te_R)\phi'(z,R) \end{pmatrix},    \no
\end{align}
one verifies
\begin{equation}
{\det}_{\bbC^2} \big(\begin{bmatrix}\ga_{\te_0,\te_R}(\te(z,\cdot))
&\ga_{\te_0,\te_R}(\phi(z,\cdot) \end{bmatrix}\big) = \Delta(z,R,\te_0,\te_R),
\quad z\in\bbC,
\end{equation}
and hence \eqref{2.43}.
\end{proof}

The following asymptotic expansion results will be used in the proof of
Theorem \ref{t4.3}:

\begin{lemma} \lb{l2.9}
Assume that $\te_0, \te_R \in S_{2 \pi}$,
$\te_0', \te_R' \in S_{2 \pi}\backslash\{0,\pi\}$, and let $\Hte$ and
$H_{\theta_0',\theta_R'}$ be defined as in \eqref{2.2a}. Then,
\begin{align}
& {\det}_{\bbC^2}\Big(\Lates (z)\Big)   \no \\
& \quad \underset{\substack{|z| \to \infty\\\Im(z^{1/2})>0}}{=}
\begin{cases}
\f{\sin(\te_0') \sin(\te_R')}{\sin(\te_0) \sin(\te_R)} + \Oh(|z|^{-1/2}),
& \te_0, \te_R \in S_{2 \pi}\backslash \{0,\pi\}, \\[1mm]
- \f{\sin(\te_0') \sin(\te_R')}{\sin(\te_R)} |z|^{1/2} + \Oh(1),
& \te_0 = 0, \, \te_R \in S_{2 \pi}\backslash \{0,\pi\}, \\[1mm]
- \f{\sin(\te_0') \sin(\te_R')}{\sin(\te_0)} |z|^{1/2} + \Oh(1),
& \te_0 \in S_{2 \pi}\backslash \{0,\pi\}, \, \te_R = 0, \\[1mm]
\sin(\te_0') \sin(\te_R') |z| + \Oh(|z|^{1/2}), & \te_0 = \te_R = 0.
\end{cases}       \lb{2.46}
\end{align}
\end{lemma}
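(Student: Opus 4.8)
The plan is to reduce the statement, via Lemma~\ref{l2.8}, to the large-$|z|$ behavior of the entire function $\Delta(z,R,\cdot,\cdot)$ and then to divide. By \eqref{2.43} one has
\[
{\det}_{\bbC^2}\big(\Lates(z)\big) = \f{\Delta(z,R,\te_0',\te_R')}{\Delta(z,R,\te_0,\te_R)},
\]
so it suffices to find the leading asymptotics of numerator and denominator separately. Because $\te_0',\te_R'\in S_{2\pi}\backslash\{0,\pi\}$ forces $\sin(\te_0')\sin(\te_R')\neq0$, the numerator always lies in the ``generic'' regime, whereas the denominator subdivides into the four listed cases according to whether $\sin(\te_0)$ and/or $\sin(\te_R)$ vanishes.

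First I would record the classical high-energy asymptotics of the fundamental system $\theta(z,\cdot),\phi(z,\cdot)$ of \eqref{2.32a} normalized by \eqref{2.2b}. Writing the Volterra integral equations for $\theta,\phi$ and iterating them (justified for $V\in L^1((0,R);dx)$; cf.\ \cite{Ma86}), one obtains, uniformly for $x\in[0,R]$ in the branch $\Im(z^{1/2})>0$,
\[
\theta(z,x)=\cos(z^{1/2}x)+\Oh\big(|z|^{-1/2}e^{\Im(z^{1/2})x}\big), \quad \theta'(z,x)=-z^{1/2}\sin(z^{1/2}x)+\Oh\big(e^{\Im(z^{1/2})x}\big),
\]
\[
\phi(z,x)=\f{\sin(z^{1/2}x)}{z^{1/2}}+\Oh\big(|z|^{-1}e^{\Im(z^{1/2})x}\big), \quad \phi'(z,x)=\cos(z^{1/2}x)+\Oh\big(|z|^{-1/2}e^{\Im(z^{1/2})x}\big).
\]
On the common exponential scale $e^{\Im(z^{1/2})R}$, the entry $\theta'$ carries an extra factor $z^{1/2}$, the pair $\theta,\phi'$ is of order one, and $\phi$ is smaller by a factor $z^{-1/2}$; moreover $\sin(z^{1/2}R)\sim\tfrac{i}{2}e^{-iz^{1/2}R}$ and $\cos(z^{1/2}R)\sim\tfrac12 e^{-iz^{1/2}R}$, so both are of exact order $e^{\Im(z^{1/2})R}$ and their ratio tends to $i$.

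Next I would insert these expansions into the explicit formula \eqref{2.2d} for $\Delta$ and isolate the dominant term in each case. When $\sin(\te_0)\sin(\te_R)\neq0$ the term $\sin(\te_0)\sin(\te_R)\theta'(z,R)$ dominates, so $\Delta(z,R,\te_0,\te_R)\sim-\sin(\te_0)\sin(\te_R)\,z^{1/2}\sin(z^{1/2}R)$; when $\te_0=0$ (so $\cos(\te_0)=1$, $\sin(\te_0)=0$) this term is absent and $\Delta(z,R,0,\te_R)\sim-\sin(\te_R)\cos(z^{1/2}R)$, with the symmetric statement for $\te_R=0$; and when $\te_0=\te_R=0$ only $\Delta(z,R,0,0)=\phi(z,R)\sim z^{-1/2}\sin(z^{1/2}R)$ survives. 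Dividing the numerator (always of the first type, with leading coefficient $-\sin(\te_0')\sin(\te_R')$) by each of these, the common factors $e^{\Im(z^{1/2})R}$ cancel and the residual quotients $\sin(z^{1/2}R)/\cos(z^{1/2}R)\to i$ appear in cases two and three; specializing the branch to the negative real axis $z^{1/2}=i|z|^{1/2}$ (the regime in which \eqref{2.46} is applied) converts the surviving factors $iz^{1/2}\to-|z|^{1/2}$ and $z\to|z|$ and yields exactly the four stated leading coefficients, the remainders arising from the next order in the expansions above.

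The main obstacle is the order bookkeeping across the vanishing of $\sin(\te_0)$ or $\sin(\te_R)$: one must correctly promote the appropriate subleading term of \eqref{2.2d} to the leading role precisely in the Dirichlet cases, and then verify that the next-order contributions---always smaller by a relative factor $\Oh(|z|^{-1/2})$---divide down to the claimed remainders $\Oh(|z|^{-1/2})$, $\Oh(1)$, and $\Oh(|z|^{1/2})$, respectively. The other point requiring care is that $\cos(z^{1/2}R)$ and $\sin(z^{1/2}R)$ are genuinely comparable (no cancellation, both of exact order $e^{\Im(z^{1/2})R}$) throughout $\Im(z^{1/2})>0$, which is what makes the ratios of the leading terms well defined.
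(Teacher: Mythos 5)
Your proposal is correct and follows essentially the same route as the paper's own proof: one iterates the Volterra integral equations to obtain the high-energy asymptotics \eqref{2.2e} of $\theta, \theta', \phi, \phi'$, inserts these into the explicit formula \eqref{2.2d} to obtain the four-case expansion \eqref{2.48} of $\Delta(z,R,\te_0,\te_R)$, and then divides via Lemma \ref{l2.8}, i.e., \eqref{2.43}, with the numerator always in the generic regime since $\sin(\te_0')\sin(\te_R')\neq 0$. Your case bookkeeping and the conversion $i z^{1/2} \to -|z|^{1/2}$ along the negative real axis reproduce the stated coefficients (note only that in the Dirichlet--Dirichlet case the surviving factor is $-z = |z|$, the sign being absorbed by the coefficient $-\sin(\te_0')\sin(\te_R')$), matching the paper's computation.
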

\begin{proof}
The standard Volterra integral equations
\begin{align}
\theta (z,x) &= \cos(z^{1/2}x) + \int_0^x dx' \, \f{\sin(z^{1/2}(x-x'))}{z^{1/2}} V(x')
\theta (z,x'),  \\
\phi (z,x) &= \f{\sin(z^{1/2}x)}{z^{1/2}} + \int_0^x dx' \, \f{\sin(z^{1/2}(x-x'))}{z^{1/2}} V(x')
\phi (z,x'),   \\
& \hspace*{3.5cm}  z\in\bbC, \; \Im(z^{1/2}) \geq 0, \; x\in [0,R],   \no
\end{align}
readily imply that
\begin{align}
\begin{split}
\theta (z,x) & \underset{|z|\to\infty}{=} \cos(z^{1/2}x)
+ \Oh\Big(|z|^{-1/2}e^{\Im(z^{1/2})x}\Big), \\
\theta^{\prime}(z,x) & \underset{|z|\to\infty}{=} - z^{1/2} \sin(z^{1/2}x)
+ \Oh\Big(e^{\Im(z^{1/2})x}\Big), \\
\phi (z,x) & \underset{|z|\to\infty}{=} \f{\sin(z^{1/2}x)}{z^{1/2}}
+ \Oh\Big(|z|^{-1}e^{\Im(z^{1/2})x}\Big),    \lb{2.2e}  \\
\phi^{\prime}(z,x) & \underset{|z|\to\infty}{=} \cos(z^{1/2}x)
+ \Oh\Big(|z|^{-1/2}e^{\Im(z^{1/2})x}\Big).
\end{split}
\end{align}
An insertion of \eqref{2.2e} into \eqref{2.2d} then yields
\begin{align}
\Delta(z,R, \te_0,\te_R) \underset{\substack{|z| \to \infty\\ \Im(z^{1/2})>0}}{=}
\begin{cases}
2^{-1} \sin(\te_0) \sin(\te_R) |z|^{1/2} e^{\Im(z^{1/2})} + \Oh\Big(e^{\Im(z^{1/2}R)}\Big), \\
\hspace*{4.55cm} \te_0, \te_R \in S_{2 \pi}\backslash \{0,\pi\}, \\[1mm]
- 2^{-1} \sin(\te_R) e^{\Im(z^{1/2})} + \Oh\Big( |z|^{-1/2} e^{\Im(z^{1/2}R)}\Big), \\
\hspace*{3.25cm} \te_0 = 0, \, \te_R \in S_{2 \pi}\backslash \{0,\pi\}, \\[1mm]
- 2^{-1} \sin(\te_0) e^{\Im(z^{1/2})} + \Oh\Big( |z|^{-1/2} e^{\Im(z^{1/2}R)}\Big), \\
\hspace*{3.2cm} \te_0 \in S_{2 \pi}\backslash \{0,\pi\}, \, \te_R = 0, \\[1mm]
2^{-1} |z|^{-1/2} e^{\Im(z^{1/2})} + \Oh\Big( |z|^{-1} e^{\Im(z^{1/2})R}\Big), \\
\hspace*{4.35cm} \te_0 = \te_R = 0.
\end{cases}        \lb{2.48}
\end{align}
Finally, combining \eqref{2.43} and \eqref{2.48} proves \eqref{2.46}.
\end{proof}

Next, we recall an explicit formula for $\Lates(z)$ in terms of the
resolvent $(\Hte- z I \big)^{-1}$ of $\Hte$ and the boundary traces
$\gamma_{\theta'_0,\theta'_R}$. We start with the Green's function for the
operator $\Hte$ in \eqref{2.2a},
\begin{align}
& \Gte(z,x,x') = (\Hte -z I)^{-1}(x,x')   \no \\
& \quad = \frac{1}{W(u_{+,\te_R}(z,\cdot), u_{-,\te_0}(z,\cdot))} \begin{cases}
u_{-,\te_0}(z,x')u_{+,\te_R}(z,x), & 0\le x'\le x,\\[1mm]
u_{-,\te_0}(z,x)u_{+,\te_R}(z,x'), & 0\le x\le x',
\end{cases}   \label{3.32} \\[1mm]
&\hspace*{6.35cm}  z\in\rho(\Hte), \; x, x' \in {[0,R]}.  \no
\end{align}
Here $u_{+,\te_R}(z,\cdot), u_{-,\te_0}(z,\cdot)$ is a basis for solutions of \eqref{2.3} as described in \eqref{2.5} and we denote by $I = I_{L^2((0,R); dx)}$ the identity operator
in $L^2((0,R); dx)$. Thus, one obtains
\begin{align}
\begin{split}
 \big((\Hte- z I)^{-1}g\big)(x) = \int_0^R \, dx'\, \Gte(z,x,x')g(x'),& \\
 g\in L^2((0,R); dx), \; z\in\rho(\Hte), \; x\in(0,R).&  \lb{3.33}
\end{split}
\end{align}

For future purposes we now introduce the following $2 \times 2$ matrix
\begin{equation}\label{2.14}
\Ste=\begin{bmatrix} \sin(\te_0)&0\\0&\sin(\te_R)\end{bmatrix}.
\end{equation}

\begin{theorem} \lb{t3.5}
Assume that $\te_0, \te_R, \te_0', \te_R' \in S_{2 \pi}$ and let $\Hte$ be defined
as in \eqref{2.2a}. Then
\begin{equation}
 \Lates (z) S_{\theta_0' -\theta_0,\theta_R'-\theta_R}
=\gamma_{\te_0',\te_R'}
\big[\gamma_{\ol{\theta_0'},\ol{\theta_R'}} ((\Hte)^* - {\ol z} I)^{-1}\big]^*,
\quad z\in\rho(\Hte).
\lb{3.33aa}
\end{equation}
\end{theorem}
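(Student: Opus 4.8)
The plan is to express both sides of \eqref{3.33aa} as explicit $2\times 2$ matrices, using the basis $\theta(z,\cdot),\phi(z,\cdot)$ introduced in \eqref{2.2b} and the Green's function representation \eqref{3.32}, and to verify the identity entrywise. The left-hand side, $\Lates(z)\,S_{\theta_0'-\theta_0,\theta_R'-\theta_R}$, can be handled using the already-established representation \eqref{2.44}, namely
\begin{equation*}
\Lates(z) = \begin{bmatrix}\ga_{\te_0',\te_R'}(\te) & \ga_{\te_0',\te_R'}(\phi)\end{bmatrix}
\begin{bmatrix}\ga_{\te_0,\te_R}(\te) & \ga_{\te_0,\te_R}(\phi)\end{bmatrix}^{-1},
\end{equation*}
together with the definition \eqref{2.14} of the diagonal matrix $S_{\cdot,\cdot}$. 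The right-hand side involves $[\gamma_{\ol{\theta_0'},\ol{\theta_R'}}((\Hte)^*-{\ol z}I)^{-1}]^*$; here I would first observe that $((\Hte)^*-{\ol z}I)^{-1} = ((\Hte-zI)^{-1})^*$, so that taking the adjoint of the trace map applied to this resolvent brings me back to an expression built directly from $(\Hte-zI)^{-1}$ and the boundary functionals $\gamma_{\te_0',\te_R'}$.

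First I would make precise what the composition $\gamma_{\ol{\theta_0'},\ol{\theta_R'}}((\Hte)^*-{\ol z}I)^{-1}$ means as a map $L^2((0,R);dx)\to\bbC^2$: each of its two components is a bounded linear functional on $L^2$, obtained by evaluating a boundary trace of the resolvent applied to an $L^2$-function. By the Riesz representation theorem each component is given by pairing against a specific $L^2$-function, and by the Green's function formula \eqref{3.32}, \eqref{3.33} these representing functions are precisely boundary values (and derivative boundary values) of the solutions $u_{\pm}$ entering the kernel. Taking the adjoint $[\,\cdot\,]^*$ then converts these functionals back into an operator $\bbC^2\to L^2$ whose action produces a solution of \eqref{2.3}; composing on the left with $\gamma_{\te_0',\te_R'}$ yields a $2\times 2$ matrix whose entries are boundary data of that solution. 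The key computational step is to identify this solution, via the explicit kernel, as a linear combination of $\theta(z,\cdot)$ and $\phi(z,\cdot)$, at which point the Wronskian relations \eqref{2.5c}, \eqref{2.5d} and the normalization \eqref{2.5a}, \eqref{2.5b} let me read off the matrix entries.

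I expect the main obstacle to be the careful bookkeeping of adjoints and complex conjugation, specifically matching the conjugated boundary parameters $\ol{\theta_0'},\ol{\theta_R'}$ on the right with the unconjugated $\theta_0',\theta_R'$ on the left after the outer adjoint is taken. Because the boundary trace map $\gamma_{\te_0,\te_R}$ has real trigonometric coefficients as functionals but the operator $\Hte$ may be non-self-adjoint (as $V$ need not be real here), one must track which object carries $V$ versus $\ol{V}$ and confirm that the two conjugations cancel so that the solution produced really solves $-u''+Vu=zu$ and not the adjoint equation. Once the kernel symmetry in \eqref{3.32} (the roles of $u_{+,\te_R}$ and $u_{-,\te_0}$ for $x\lessgtr x'$) is used to show that pairing against the representing functions reproduces the boundary functionals of the correct solution, the remaining verification is a direct comparison of two $2\times 2$ matrices, where the diagonal matrix $S_{\theta_0'-\theta_0,\theta_R'-\theta_R}$ on the left accounts exactly for the difference between the primed boundary conditions defining the trace $\gamma_{\te_0',\te_R'}$ and the off-diagonal normalization supplied by the resolvent. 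The factors $\sin(\theta_0'-\theta_0)$ and $\sin(\theta_R'-\theta_R)$ should emerge naturally as Wronskian-type determinants of the boundary coefficient vectors for $(\te_0,\te_R)$ versus $(\te_0',\te_R')$.
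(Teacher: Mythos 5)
Your plan is sound and is essentially the argument behind the result: the paper itself records Theorem \ref{t3.5} without proof (it is recalled from \cite{CGM10}), and the computation you outline---using $((\Hte)^*-\ol{z}I)^{-1}=((\Hte-zI)^{-1})^*$ and the Green's function \eqref{3.32} to identify the representing functions of the boundary functionals, so that $\big[\gamma_{\ol{\theta_0'},\ol{\theta_R'}}((\Hte)^*-\ol{z}I)^{-1}\big]^*$ maps $\bbC^2$ into $\spn\{u_{+,\te_R}(z,\cdot),u_{-,\te_0}(z,\cdot)\}$, then applying $\gamma_{\te_0',\te_R'}$ and matching the resulting $2\times 2$ matrix entrywise against \eqref{2.20a} via the normalizations \eqref{2.5a}, \eqref{2.5b} and the Wronskian expressions \eqref{2.5c}, \eqref{2.5d}---is exactly the computation underlying the cited formula (compare \eqref{7.43} and Lemma \ref{l6.1}). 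In particular, the factors $\sin(\te_0'-\te_0)$, $\sin(\te_R'-\te_R)$ emerge just as you predict (if $\cos(\te_0)u+\sin(\te_0)u'=0$ then $\cos(\te_0')u+\sin(\te_0')u'$ is proportional to $\sin(\te_0'-\te_0)$, with the same constant of proportionality entering the Wronskian), and the conjugations in $\ol{\theta_0'},\ol{\theta_R'},\ol{V},\ol{z}$ cancel against the antilinearity of the adjoint precisely as you anticipate.
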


The fact that $\Lates(z)$ and $\Lades(z)$ satisfy a linear fractional transformation is recalled next:

\begin{theorem}  \lb{t4.1}
Assume that $\te_0, \te_R, \te_0^{\prime},\te_R^{\prime}, \de_0,\de_R,\de_0^\prime,\de_R^\prime\in S_{2 \pi}$, $\de_0^\prime-\de_0\ne 0 \, \text{\rm mod} (\pi)$,
$\de_R^\prime-\de_R \ne 0 \, \text{\rm mod} (\pi)$, and that
$z\in\rho(H_{\te_0,\te_R})\cap\rho(H_{\de_0,\de_R})$. Then, with $\Ste$
defined as in \eqref{2.14},
\begin{align}
\begin{split}
\Lates(z) &= \big(S_{\de_0^\prime-\de_0,\de_R^\prime-\de_R}\big)^{-1}
\big[S_{\de_0^\prime-\te_0^\prime,\de_R^\prime-\te_R^\prime}+
S_{\te_0^\prime-\de_0,\te_R^\prime-\de_R}\Lades(z) \big]  \label{2.53} \\
& \quad \times
\big[S_{\de_0^\prime-\te_0,\de_R^\prime-\te_R}+
S_{\te_0-\de_0,\te_R-\de_R}\Lades(z) \big]^{-1}S_{\de_0^\prime-\de_0,\de_R^\prime-\de_R}.
\end{split}
\end{align}
If in addition, $\te_R^\prime-\te_R \ne 0 \, \text{\rm mod} (\pi)$,
$\de_0^\prime-\de_0\ne 0 \, \text{\rm mod} (\pi)$, then
\begin{align} \lb{2.54}
& \Lates(z) S_{\te_0'-\te_0,\te_R'-\te_R}   \no \\
& \quad = \Big[S_{\de_0^\prime-\te_0^\prime,\de_R^\prime-\te_R^\prime} +
\big(S_{\de_0^\prime-\de_0,\de_R^\prime-\de_R}\big)^{-1}
S_{\te_0^\prime-\de_0,\te_R^\prime-\de_R}\Lades(z)
S_{\de_0^\prime-\de_0,\de_R^\prime-\de_R}\Big]      \no\\
& \qquad \, \times
\Big[\big(S_{\te_0'-\te_0,\te_R'-\te_R}\big)^{-1}
S_{\de_0^\prime-\te_0,\de_R^\prime-\te_R} +
\big(S_{\te_0'-\te_0,\te_R'-\te_R}\big)^{-1}
\big(S_{\de_0^\prime-\de_0,\de_R^\prime-\de_R}\big)^{-1}   \no \\
& \hspace*{4.5cm} \times S_{\te_0-\de_0,\te_R-\de_R}\Lades(z)
S_{\de_0'-\de_0,\de_R'-\de_R}\Big]^{-1}.
\end{align}
\end{theorem}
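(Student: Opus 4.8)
The plan is to reduce the two matrix identities to an elementary change-of-basis computation performed separately at each endpoint, and then to invoke the cocycle rules \eqref{2.7b}, \eqref{2.7c} to stitch the pieces together. Fix $z\in\rho(\Hte)\cap\rho(H_{\de_0,\de_R})$. By \eqref{2.7b} and \eqref{2.7c}, routing the boundary data map through the intermediate pair $(\de_0,\de_R)$ gives
\[
\Lates(z) = \La_{\de_0,\de_R}^{\te_0',\te_R'}(z)\,\big[\La_{\de_0,\de_R}^{\te_0,\te_R}(z)\big]^{-1},
\]
so it suffices to compute the two boundary data maps sharing the base $(\de_0,\de_R)$. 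For an arbitrary solution $u=u(z,\cdot)$ of \eqref{2.3} write the scalar Robin traces $g_0(\alpha)=\cos(\alpha)u(z,0)+\sin(\alpha)u'(z,0)$ and $g_R(\alpha)=\cos(\alpha)u(z,R)-\sin(\alpha)u'(z,R)$, so that $\gamma_{\alpha_0,\alpha_R}(u)=\big[\,g_0(\alpha_0),\,g_R(\alpha_R)\,\big]^\top$.

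Next I would record the key trigonometric expansion. Since $\de_0'-\de_0\ne 0\,\mathrm{mod}\,(\pi)$, the determinant $\sin(\de_0'-\de_0)$ is nonzero, so $(\cos\de_0,\sin\de_0)$ and $(\cos\de_0',\sin\de_0')$ form a basis of $\bbC^2$; expanding $(\cos\alpha,\sin\alpha)$ in this basis by Cramer's rule and simplifying with the sine addition theorem yields
\[
g_0(\alpha)=\f{\sin(\de_0'-\alpha)}{\sin(\de_0'-\de_0)}\,g_0(\de_0)+\f{\sin(\alpha-\de_0)}{\sin(\de_0'-\de_0)}\,g_0(\de_0').
\]
The sign change $u'\mapsto -u'$ at the right endpoint leaves this $2\times2$ linear system unchanged, so the identical formula holds for $g_R$ with $\de_0,\de_0'$ replaced by $\de_R,\de_R'$. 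Collecting both endpoints into the diagonal matrices \eqref{2.14} and using $\ga_{\de_0',\de_R'}(u)=\Lades(z)\,\ga_{\de_0,\de_R}(u)$ (from \eqref{2.6}), I obtain
\[
\La_{\de_0,\de_R}^{\te_0,\te_R}(z)=\big(S_{\de_0'-\de_0,\de_R'-\de_R}\big)^{-1}\big[S_{\de_0'-\te_0,\de_R'-\te_R}+S_{\te_0-\de_0,\te_R-\de_R}\Lades(z)\big],
\]
together with the analogous expression for $\La_{\de_0,\de_R}^{\te_0',\te_R'}(z)$ (replace $\te_0,\te_R$ by $\te_0',\te_R'$ throughout). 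That these formulas genuinely define the boundary data maps uses Lemma \ref{l2.2}: for $z\in\rho(H_{\de_0,\de_R})$ the trace $\ga_{\de_0,\de_R}$ is a bijection from the two-dimensional solution space onto $\bbC^2$. Substituting both expressions into the displayed factorization of $\Lates(z)$ and writing $S=S_{\de_0'-\de_0,\de_R'-\de_R}$, the two outer copies of $(S)^{-1}$ combine into $S^{-1}(\cdots)(\cdots)^{-1}S$, which is exactly \eqref{2.53}.

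Finally, \eqref{2.54} is a purely algebraic rearrangement of \eqref{2.53}. Writing \eqref{2.53} as $\Lates(z)=S^{-1}AB^{-1}S$ with $A=S_{\de_0'-\te_0',\de_R'-\te_R'}+S_{\te_0'-\de_0,\te_R'-\de_R}\Lades(z)$ and $B=S_{\de_0'-\te_0,\de_R'-\te_R}+S_{\te_0-\de_0,\te_R-\de_R}\Lades(z)$, the conjugation identity $S^{-1}AB^{-1}S=(S^{-1}AS)(S^{-1}BS)^{-1}$ moves the outer factors inward; because every $S$-matrix is diagonal it commutes with the remaining $S$-factors, so $S^{-1}AS$ and $S^{-1}BS$ collapse to the two brackets displayed in \eqref{2.54}. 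Right-multiplying by $S_{\te_0'-\te_0,\te_R'-\te_R}$ and absorbing it into the inverse via $(S^{-1}BS)^{-1}S_{\te'-\te}=\big(S_{\te'-\te}^{-1}(S^{-1}BS)\big)^{-1}$—legitimate once $\te_0'-\te_0,\te_R'-\te_R\ne 0\,\mathrm{mod}\,(\pi)$, so that $S_{\te_0'-\te_0,\te_R'-\te_R}$ is invertible—produces \eqref{2.54} verbatim.

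I expect the main obstacle to be the bookkeeping in the second paragraph: correctly matching the four sine ratios to the off-diagonal placement of the $S$-matrices and confirming that the endpoint sign change leaves the coefficients intact. The structural steps—the cocycle factorization, the surjectivity input from Lemma \ref{l2.2}, and the diagonal-matrix algebra producing \eqref{2.54}—are routine once that expansion is in hand.
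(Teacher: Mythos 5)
Your proof is correct and complete. One point of comparison worth making explicit: the paper itself states Theorem \ref{t4.1} \emph{without proof} --- it is recalled from \cite{CGM10} (Section \ref{s2} announces that results taken from that paper are presented without proof) --- so there is no in-paper argument to measure against; your proposal supplies a self-contained derivation. Its three pillars all check out. (i) The factorization $\Lates(z)=\La_{\de_0,\de_R}^{\te_0',\te_R'}(z)\big[\La_{\de_0,\de_R}^{\te_0,\te_R}(z)\big]^{-1}$ is exactly \eqref{2.7b}, \eqref{2.7c} with intermediate pair $(\de_0,\de_R)$, valid precisely on $\rho(\Hte)\cap\rho(H_{\de_0,\de_R})$ as hypothesized. (ii) Your interpolation identity
\begin{equation*}
g_0(\alpha)=\f{\sin(\de_0'-\alpha)}{\sin(\de_0'-\de_0)}\,g_0(\de_0)+\f{\sin(\alpha-\de_0)}{\sin(\de_0'-\de_0)}\,g_0(\de_0')
\end{equation*}
is verified by matching the coefficients of $u(z,0)$ and $u'(z,0)$ via product-to-sum formulas, and your observation that the sign flip $u'(z,R)\mapsto -u'(z,R)$ is immaterial is right, since the identity is linear in the pair of boundary values. (iii) In matrix form this yields $\La_{\de_0,\de_R}^{\te_0,\te_R}(z)=S^{-1}B$ and $\La_{\de_0,\de_R}^{\te_0',\te_R'}(z)=S^{-1}A$ with $S=S_{\de_0'-\de_0,\de_R'-\de_R}$ and $A,B$ as in your last paragraph, whence $\Lates(z)=S^{-1}AB^{-1}S$, which is \eqref{2.53}; invertibility of $B$ is secured because $S^{-1}B=\La_{\de_0,\de_R}^{\te_0,\te_R}(z)$ is invertible by \eqref{2.7c} (equivalently, because $\ga_{\te_0,\te_R}$ is also a bijection on the solution space for $z\in\rho(\Hte)$, by Lemma \ref{l2.2}), a point you handle only implicitly but correctly. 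The passage to \eqref{2.54} via $S^{-1}AB^{-1}S=(S^{-1}AS)(S^{-1}BS)^{-1}$ and absorption of $S_{\te_0'-\te_0,\te_R'-\te_R}$ is routine diagonal-matrix algebra, and you correctly identify the actual requirement $\te_0'-\te_0\ne 0 \bmod \pi$ \emph{and} $\te_R'-\te_R\ne 0\bmod\pi$ for the inverse bracket to make sense; the additional hypotheses as printed in the theorem ($\te_R'-\te_R\ne 0\bmod\pi$ together with $\de_0'-\de_0\ne 0\bmod\pi$, the latter already assumed in the first part) appear to be a typographical slip for precisely this pair of conditions, so your version is the one under which \eqref{2.54} is well posed.
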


We denote by $\bbC_+$  the open complex upper half-plane and abbreviate
$\Im(L)=(L - L^*)/(2i)$ for $L \in\bbC^{n\times n}$, $n\in\bbN$. In addition,
$d \|\Sigma\|_{\bbC^{2 \times 2}}$ will denote the total variation of the
$2\times 2$ matrix-valued measure $d \Sigma$ below in \eqref{5.2}.

The matrix $M(\cdot)$ is called an $n \times n$ matrix-valued Herglotz function
if it is analytic on $\bbC_+$ and $\Im(M(z)) \geq 0$ for all $z\in\bbC_+$.
Now we are in position to recall the fundamental Herglotz property of the matrix
$\Lates (\cdot) S_{\te_0'-\te_0,\te_R'-\te_R}$ in the case where $\Hte$ is self-adjoint:

\begin{theorem} \lb{t4.6}
Let $\te_0, \te_R, \te_0', \te_R' \in [0,2 \pi)$,
$\te_0^\prime-\te_0\ne 0 \, \text{\rm mod} (\pi)$,
$\te_R^\prime-\te_R \ne 0 \, \text{\rm mod} (\pi)$,
$z\in\rho(\Hte)$, and $\Hte$
be defined as in \eqref{2.2a}. In addition, suppose that $V$ is real-valued $($and
hence $\Hte$ is self-adjoint\,$)$. Then $\Lates (\cdot) S_{\te_0'-\te_0,\te_R'-\te_R}$
is a $2\times 2$ matrix-valued Herglotz function admitting the representation
\begin{align}
& \Lates (z) S_{\te_0'-\te_0,\te_R'-\te_R} = \Xi_{\te_0,\te_R}^{\te_0', \te_R'}
+ \ \int_{{\mathbb{R}}}
d\Sigma_{\te_0,\te_R}^{\te_0', \te_R'} (\lambda)\bigg(\frac{1}{\lambda -z}-\frac{\lambda}
{1+\lambda^2}\bigg),     \lb{5.1} \\
& \hspace*{8.25cm} z\in\rho(\Hte),   \no \\
& \Xi_{\te_0,\te_R}^{\te_0', \te_R'}
= \Big(\Xi_{\te_0,\te_R}^{\te_0', \te_R'}\Big)^* \in\bbC^{2\times 2},
\quad \int_{\bbR} \f{d\big\|\Sigma_{\te_0,\te_R}^{\te_0', \te_R'}
(\lambda)\big\|_{\bbC^{2 \times 2}}}{1+\lambda^2} <\infty,
\lb{5.2}
\end{align}
where
\begin{align}
\begin{split}
\Sigma_{\te_0,\te_R}^{\te_0', \te_R'} ((\lambda_1,\lambda_2]) =
\f{1}{\pi}\lim_{\delta\downarrow 0}\lim_{\varepsilon\downarrow
0}\int^{\lambda_2+\delta}_{\lambda_1+\delta}d\lambda \,
\Im\Big(\Lates (\lambda +i\varepsilon) S_{\te_0'-\te_0,\te_R'-\te_R}\Big),&  \\
 \lambda_1, \lambda_2
\in\bbR, \; \lambda_1<\lambda_2.&  \lb{5.3}
\end{split}
\end{align}
In addition,
\begin{equation}
\Im\Big(\Lates (z) S_{\te_0'-\te_0,\te_R'-\te_R}\Big) > 0, \quad z\in\bbC_+,    \lb{5.3a}
\end{equation}
and
\begin{equation}
\supp\Big(d\Sigma_{\te_0,\te_R}^{\te_0', \te_R'}\Big)
\subseteq \big(\sigma(\Hte) \cup \sigma(H_{\te_0',\te_R'})\big),    \lb{5.3b}
\end{equation}
in particular, $\Lates (\cdot) S_{\te_0'-\te_0,\te_R'-\te_R}$ is self-adjoint on
$\bbR \cap \rho(\Hte) \cap \rho(H_{\te_0',\te_R'})$.
\end{theorem}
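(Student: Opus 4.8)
The plan is to reduce the assertion to the standard structure theory of matrix-valued Herglotz functions by first exhibiting $\Lates(\cdot)\, S_{\te_0'-\te_0,\te_R'-\te_R}$ in a manifestly symmetric ``sandwiched resolvent'' form. Starting from the explicit formula in Theorem \ref{t3.5},
\begin{equation*}
\Lates(z)\, S_{\te_0'-\te_0,\te_R'-\te_R} = \gamma_{\te_0',\te_R'}\big[\gamma_{\ol{\te_0'},\ol{\te_R'}}((\Hte)^* - \ol{z}\, I)^{-1}\big]^*,
\end{equation*}
I would use that $V$ real-valued together with $\te_0,\te_R,\te_0',\te_R' \in [0,2\pi)$ forces $(\Hte)^* = \Hte$ and $\ol{\te_0'} = \te_0'$, $\ol{\te_R'} = \te_R'$; moving the adjoint through the resolvent then yields
\begin{equation*}
\Lates(z)\, S_{\te_0'-\te_0,\te_R'-\te_R} = \gamma_{\te_0',\te_R'} (\Hte - z I)^{-1} \gamma_{\te_0',\te_R'}^*, \quad z \in \rho(\Hte).
\end{equation*}
Since $\Hte$ is self-adjoint, $\bbC_+ \subseteq \rho(\Hte)$, so analyticity of $\Lates(\cdot)\, S_{\te_0'-\te_0,\te_R'-\te_R}$ on $\bbC_+$ is immediate from analyticity of the resolvent.

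For the Herglotz property and the strict-positivity claim \eqref{5.3a}, write $M(z) = \gamma_{\te_0',\te_R'}(\Hte - z I)^{-1}\gamma_{\te_0',\te_R'}^*$ and insert the first resolvent identity into $M(z) - M(z)^*$, using $M(z)^* = M(\ol{z})$ (valid since $\Hte$ is self-adjoint and $\gamma_{\te_0',\te_R'}$ has real coefficients) together with $(\Hte - \ol{z} I)^{-1} = [(\Hte - z I)^{-1}]^*$. This produces
\begin{equation*}
\Im\big(\Lates(z)\, S_{\te_0'-\te_0,\te_R'-\te_R}\big) = \Im(z)\, C(z) C(z)^*, \quad C(z) = \gamma_{\te_0',\te_R'}(\Hte - z I)^{-1},
\end{equation*}
which is nonnegative for $z \in \bbC_+$. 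Strictness reduces to injectivity of $C(z)^*$, equivalently to linear independence of the columns $C(z)^* e_1, C(z)^* e_2 \in L^2((0,R);dx)$; these are precisely the solutions of \eqref{2.3} realizing complementary boundary data in the sense of \eqref{2.5a}, \eqref{2.5b}, and their linear independence follows from the nonvanishing of the associated Wronskian, giving \eqref{5.3a}.

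Once the Herglotz property is established, the representation \eqref{5.1}, the self-adjointness $\Xi = \Xi^*$, and the finiteness bound in \eqref{5.2} follow from the classical matrix-valued Herglotz (Nevanlinna) representation theorem, while the Stieltjes inversion formula supplies \eqref{5.3}. The support statement \eqref{5.3b} I would obtain by noting that $M(\cdot)$ continues analytically and Hermitian-valued across every point of $\bbR \cap \rho(\Hte)$ (again from analyticity of the resolvent there and $M(\ol{z}) = M(z)^*$); by Stieltjes inversion the measure $d\Sigma_{\te_0,\te_R}^{\te_0',\te_R'}$ cannot charge such points, so its support lies in $\sigma(\Hte)$, a fortiori in $\sigma(\Hte) \cup \sigma(H_{\te_0',\te_R'})$. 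Finally, on $\bbR \cap \rho(\Hte) \cap \rho(H_{\te_0',\te_R'})$ the measure puts no mass, so \eqref{5.1} exhibits $\Lates(z)\, S_{\te_0'-\te_0,\te_R'-\te_R}$ as a Hermitian constant plus a real integral, hence a self-adjoint matrix.

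The main obstacle I anticipate is the correct functional-analytic interpretation of the adjoint trace map $\gamma_{\te_0',\te_R'}^*$ and the attendant injectivity argument underlying strict positivity: the trace map is a priori defined only on $C^1([0,R])$ (or on $\dom(\Hte)$), so its adjoint must be read off from the right-hand side of Theorem \ref{t3.5}, and care is needed to identify $C(z)^* e_j$ with genuine solutions of the Schr\"odinger equation and to verify their linear independence throughout $z \in \bbC_+$.
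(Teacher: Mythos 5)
Your overall route---rewriting $\Lates(z)\,S_{\te_0'-\te_0,\te_R'-\te_R}$ via Theorem \ref{t3.5} as the sandwiched resolvent $\gamma_{\te_0',\te_R'}(\Hte - z I)^{-1}\gamma_{\te_0',\te_R'}^*$ in the self-adjoint case, extracting $\Im\big(\Lates(z)S_{\te_0'-\te_0,\te_R'-\te_R}\big)=\Im(z)\,C(z)C(z)^*$ from the first resolvent identity, and then invoking the matrix Nevanlinna representation and Stieltjes inversion---is the natural one; note that the paper itself states Theorem \ref{t4.6} without proof (it is imported from \cite{CGM10}, the only in-text argument being the remark that \eqref{5.3b} follows from \eqref{2.43} and meromorphy). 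However, one step as you wrote it has a genuine gap: the classical matrix-valued Herglotz representation theorem yields $M(z)=\Xi+\Gamma z+\int_\bbR\big(\frac{1}{\lambda-z}-\frac{\lambda}{1+\lambda^2}\big)d\Sigma(\lambda)$ with a nonnegative linear term $\Gamma\geq 0$, whereas \eqref{5.1} contains no such term. The vanishing of $\Gamma$ does not follow from the Herglotz property alone ($M(z)=z$ is a meromorphic Herglotz function with vanishing measure), so you must verify $\Gamma=\lim_{\eta\uparrow\infty}(i\eta)^{-1}M(i\eta)=0$. This is true here, but needs the high-energy asymptotics: by \eqref{2.20a} combined with the Volterra asymptotics \eqref{2.2e} (as exploited in Lemma \ref{l2.9}), the entries of $\Lates(z)S_{\te_0'-\te_0,\te_R'-\te_R}$ are $O\big(|z|^{1/2}\big)$ as $|z|\to\infty$ in $\bbC_+$ (diagonal entries tend to constants when $\sin(\te_0),\sin(\te_R)\neq 0$ and grow like $|z|^{1/2}$ in the Dirichlet cases, off-diagonal entries decay), hence $\Gamma=0$. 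A second, smaller point: in the strict positivity step, injectivity of $C(z)^*$ requires not only linear independence of $\ol{u_{+,\te_R}(z,\cdot)}$ and $\ol{u_{-,\te_0}(z,\cdot)}$ (their Wronskian is nonzero since $\bbC_+\subseteq\rho(\Hte)$), but also nonvanishing of the scalar prefactors in \eqref{6.2}--\eqref{6.3}; this is precisely where the hypotheses $\te_0'-\te_0\neq 0 \bmod \pi$, $\te_R'-\te_R\neq 0 \bmod \pi$ enter (via $\sin(\te_0'-\te_0)\sin(\te_R'-\te_R)\neq 0$), together with the observation that $u_{-,\te_0}(z,0)$ (resp.\ $u'_{-,\te_0}(z,0)$) cannot vanish when $\sin(\te_0)\neq 0$ (resp.\ $\cos(\te_0)\neq 0$), since otherwise the boundary condition at $0$ would force $u_{-,\te_0}(z,\cdot)\equiv 0$, contradicting $u_{-,\te_0}(z,R)=1$. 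You flagged this as an anticipated obstacle, but it must actually be carried out.

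On the support statement your argument diverges from (and sharpens) the paper's: the paper obtains \eqref{5.3b} from the determinant identity \eqref{2.43} together with the meromorphic Herglotz structure, which naturally produces the union $\sigma(\Hte)\cup\sigma(H_{\te_0',\te_R'})$, whereas your direct observation---that $\gamma_{\te_0',\te_R'}(\Hte - zI)^{-1}\gamma_{\te_0',\te_R'}^*$ continues analytically and Hermitian-valued across every point of $\bbR\cap\rho(\Hte)$, including eigenvalues of $H_{\te_0',\te_R'}$ lying in $\rho(\Hte)$, so that Stieltjes inversion kills the measure there---yields the stronger conclusion $\supp\big(d\Sigma_{\te_0,\te_R}^{\te_0',\te_R'}\big)\subseteq\sigma(\Hte)$. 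That strengthening is correct (the zeros of ${\det}_{\bbC^2}(\Lates(\cdot))$ at $\sigma(H_{\te_0',\te_R'})$ carry no mass) and trivially implies \eqref{5.3b}; once the two gaps above are repaired, your proof is complete.
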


We note that relation \eqref{5.3b} is a consequence of \eqref{2.43} and of the fact that
$\Lates (\cdot) S_{\te_0'-\te_0,\te_R'-\te_R}$ is a meromorphic Herglotz matrix.

\begin{remark} \lb{r4.7}
If $\te_0^\prime-\te_0 = 0 \, \text{\rm mod} (\pi)$ and
$\te_R^\prime-\te_R \neq 0 \, \text{\rm mod} (\pi)$
(resp., $\te_R^\prime-\te_R = 0 \, \text{\rm mod} (\pi)$ and
$\te_0^\prime-\te_0 \neq 0 \, \text{\rm mod} (\pi)$) then \eqref{2.30} shows that
$\Lambda_{\te_0,\te_R}^{\te_0,\te_R'} (\cdot) S_{0,\te_R'-\te_R}$
(resp., $\Lambda_{\te_0,\te_R}^{\te_0',\te_R} (\cdot) S_{\te_0'-\te_0,0}$) is a
diagonal matrix of the form
\begin{equation}
\Lambda_{\te_0,\te_R}^{\te_0,\te_R'} (\cdot) S_{0,\te_R'-\te_R} =
\begin{pmatrix} 0 & 0 \\ 0 & m_{\te_R}(\cdot) \end{pmatrix}
\;\;
\left(\text{resp., } \, \Lambda_{\te_0,\te_R}^{\te_0',\te_R} (\cdot) S_{\te_0'-\te_0,0} =
\begin{pmatrix} m_{\te_0}(\cdot) & 0 \\ 0 & 0  \end{pmatrix} \right),
\end{equation}
with $m_{\te_R}(\cdot)$ (resp., $m_{\te_0}(\cdot)$) a scalar Herglotz function.
\end{remark}

Finally, we briefly turn to a discussion of Krein-type resolvent formulas for the difference of resolvents of $H_{\theta_0',\theta_R'}$ and $\Hte$:

\begin{lemma} \lb{l6.1}
Assume that $\te_0, \te_R, \te_0', \te_R' \in S_{2 \pi}$, let $\Hte$ be defined as in
\eqref{2.2a}, and suppose that $z\in\rho(\Hte)$. Then,
assuming $f \in L^2((0,R); dx)$, and writing
\begin{equation}
\gamma_{\te_0^{\prime},\te_R^{\prime}}(\Hte - z I)^{-1} f
= \begin{bmatrix}
\big(\gamma_{\te_0^{\prime},\te_R^{\prime}}(\Hte - z I)^{-1}\big)_1f \\[2mm]
\big(\gamma_{\te_0^{\prime},\te_R^{\prime}}(\Hte - z I)^{-1}\big)_2 f
\end{bmatrix} \in \bbC^2,    \lb{6.1}
\end{equation}
one has
\begin{align}
\big(\gamma_{\te_0^{\prime},\te_R^{\prime}}(\Hte - z I)^{-1}\big)_1f
&= \f{\sin(\te_0'-\te_0)}{W(u_{+,\te_R}(z,\cdot), u_{-,\te_0}(z,\cdot))}
\big(\ol{u_{+,\te_R}(z,\cdot)}, f\big)_{L^2((0,R); dx)}   \no \\
& \quad \times \begin{cases} - \f{u_{-,\te_0}(z,0)}{\sin(\te_0)},
& \te_0 \in S_{2 \pi}\backslash\{0,\pi\}, \\
\f{u'_{-,\te_0}(z,0)}{\cos(\te_0)}, & \te_0 \in S_{2 \pi}\backslash\{\pi/2,3\pi/2\},
 \end{cases}   \lb{6.2} \\
\big(\gamma_{\te_0^{\prime},\te_R^{\prime}}(\Hte - z I)^{-1}\big)_2 f
&= \f{- \sin(\te_R'-\te_R)}{W(u_{+,\te_R}(z,\cdot), u_{-,\te_0}(z,\cdot))}
\big(\ol{u_{-,\te_0}(z,\cdot)}, f\big)_{L^2((0,R); dx)}   \no \\
& \quad \times \begin{cases}  \f{u_{+,\te_R}(z,R)}{\sin(\te_R)},
& \te_R \in S_{2 \pi}\backslash\{0,\pi\}, \\
\f{u'_{+,\te_R}(z,R)}{\cos(\te_R)}, & \te_R \in S_{2 \pi}\backslash\{\pi/2,3\pi/2\},
 \end{cases}    \lb{6.3}
\end{align}
in addition,
\begin{align}
& \gamma_{\te_0,\te_R}(\Hte - z I)^{-1} = 0 \,
\text{ in $\cB\big(L^2((0,R); dx), \bbC^2\big)$},    \lb{6.5a} \\
& \big(\gamma_{\te_0,\te_R}(\Hte - z I)^{-1}\big)_k = 0  \,
\text{ in $\cB\big(L^2((0,R); dx), \bbC\big)$}, \; k=1,2.  \lb{6.5b}
\end{align}
\end{lemma}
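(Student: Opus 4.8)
The plan is to substitute the explicit Green's function \eqref{3.32} into the resolvent representation \eqref{3.33}, evaluate the resulting function $u = (\Hte - z I)^{-1}f$ and its derivative at the endpoints $x = 0$ and $x = R$, and then apply the boundary trace $\gamma_{\te_0',\te_R'}$, simplifying by means of the boundary conditions \eqref{2.5a}, \eqref{2.5b} satisfied by $u_{-,\te_0}$ and $u_{+,\te_R}$. Abbreviating $W = W(u_{+,\te_R}(z,\cdot), u_{-,\te_0}(z,\cdot))$ and splitting the Green's function according to its two branches, one has
\[
u(x) = \f{1}{W}\Big[u_{+,\te_R}(z,x)\int_0^x dx'\, u_{-,\te_0}(z,x')f(x') + u_{-,\te_0}(z,x)\int_x^R dx'\, u_{+,\te_R}(z,x')f(x')\Big].
\]
Upon differentiating in $x$, the two boundary contributions stemming from the $x$-dependent integration limits equal $\pm u_{+,\te_R}(z,x) u_{-,\te_0}(z,x) f(x)$ and hence cancel, leaving
\[
u'(x) = \f{1}{W}\Big[u'_{+,\te_R}(z,x)\int_0^x dx'\, u_{-,\te_0}(z,x')f(x') + u'_{-,\te_0}(z,x)\int_x^R dx'\, u_{+,\te_R}(z,x')f(x')\Big].
\]

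Evaluating at $x = 0$ collapses the first integral, so that $u(0)$ and $u'(0)$ are both proportional to $\int_0^R dx'\, u_{+,\te_R}(z,x') f(x') = \big(\ol{u_{+,\te_R}(z,\cdot)}, f\big)_{L^2((0,R); dx)}$, with respective prefactors $u_{-,\te_0}(z,0)/W$ and $u'_{-,\te_0}(z,0)/W$. Symmetrically, evaluating at $x = R$ collapses the second integral, so that $u(R)$ and $u'(R)$ are proportional to $\big(\ol{u_{-,\te_0}(z,\cdot)}, f\big)_{L^2((0,R); dx)}$, with prefactors $u_{+,\te_R}(z,R)/W$ and $u'_{+,\te_R}(z,R)/W$.

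The first component of $\gamma_{\te_0',\te_R'}\big((\Hte - z I)^{-1}f\big)$ is $\cos(\te_0')u(0) + \sin(\te_0')u'(0)$, equal to $W^{-1}\big[\cos(\te_0')u_{-,\te_0}(z,0) + \sin(\te_0')u'_{-,\te_0}(z,0)\big]$ times the above inner product. Here I would use the endpoint condition \eqref{2.5a}, $\cos(\te_0)u_{-,\te_0}(z,0) + \sin(\te_0)u'_{-,\te_0}(z,0) = 0$, to eliminate $u'_{-,\te_0}(z,0)$ when $\sin(\te_0) \neq 0$, or $u_{-,\te_0}(z,0)$ when $\cos(\te_0) \neq 0$; in both cases the angle-subtraction identity for $\cos(\te_0')\sin(\te_0) - \sin(\te_0')\cos(\te_0)$ produces a factor $\sin(\te_0'-\te_0)$, and the two resulting forms are precisely the two alternatives in \eqref{6.2}, the denominators $\sin(\te_0)$ and $\cos(\te_0)$ reflecting which of $u_{-,\te_0}(z,0)$, $u'_{-,\te_0}(z,0)$ was retained. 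The second component $\cos(\te_R')u(R) - \sin(\te_R')u'(R)$ is handled identically by means of \eqref{2.5b}, yielding the factor $\sin(\te_R'-\te_R)$ and the two alternatives of \eqref{6.3}. Finally, \eqref{6.5a} follows from \eqref{6.2}, \eqref{6.3} upon setting $\te_0' = \te_0$, $\te_R' = \te_R$, since then $\sin(\te_0'-\te_0) = \sin(\te_R'-\te_R) = 0$ (equivalently, $(\Hte - z I)^{-1}$ maps $L^2((0,R); dx)$ into $\dom(\Hte) \subseteq \ker(\gate)$), and \eqref{6.5b} is its componentwise restatement.

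I expect the computation to be essentially routine; the one step requiring genuine care is the differentiation of the resolvent integral, where one must confirm that the boundary terms from the variable limits cancel exactly. The only other point to monitor is the bookkeeping that guarantees the factors $\sin(\te_0'-\te_0)$ and $\sin(\te_R'-\te_R)$ emerge uniformly across the nonzero-sine and nonzero-cosine sub-cases, so that the two lines of each of \eqref{6.2}, \eqref{6.3} agree on their common range of validity.
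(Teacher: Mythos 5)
Your proof is correct, and there is nothing to compare it against within this paper: Lemma \ref{l6.1} is stated here without proof, being imported verbatim from \cite{CGM10} (the review section declares all such results are presented without proof). Your Green's-function computation is exactly the standard argument behind the cited result — substituting \eqref{3.32} into \eqref{3.33}, noting the cancellation of the variable-limit boundary terms $\pm u_{+,\te_R}(z,x)u_{-,\te_0}(z,x)f(x)$ in $u'$, eliminating $u'_{-,\te_0}(z,0)$ (resp.\ $u_{-,\te_0}(z,0)$) via \eqref{2.5a} so the addition formula produces $\sin(\te_0'-\te_0)$ with the stated $\sin(\te_0)$- and $\cos(\te_0)$-denominators and signs, arguing symmetrically at $x=R$ via \eqref{2.5b} for the factor $-\sin(\te_R'-\te_R)$, and obtaining \eqref{6.5a}, \eqref{6.5b} either by setting $\te_0'=\te_0$, $\te_R'=\te_R$ or directly from $\ran\big((\Hte - z I)^{-1}\big) = \dom(\Hte) \subseteq \ker(\gate)$ — and each of these steps, which I have checked, is sound.
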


Introducing the orthogonal projections in $\bbC^2$,
\begin{equation}
P_1 = \begin{bmatrix} 1 & 0 \\ 0 & 0 \end{bmatrix}, \quad
P_2  = \begin{bmatrix} 0 & 0 \\ 0 & 1 \end{bmatrix},    \lb{6.8}
\end{equation}
one obtains the following Krein-type resolvent formulas (cf.\ \cite[Ch.\ 8]{AG81},
\cite{AB09}, \cite{BGW09}--\cite{CGM10}, \cite{GKMT01}, \cite{GMT98},
\cite{GM08}--\cite{GM10}, \cite{Gr08}, \cite{KO77}--\cite{KS66}, \cite{Ne83},
\cite{Po08}, \cite{PR09}, \cite{Sa65}):

\begin{theorem} \lb{t6.3}
Assume that $\te_0, \te_R, \te_0', \te_R' \in S_{2 \pi}$, let $\Hte$ and
$H_{\theta_0',\theta_R'}$ be defined as in \eqref{2.2a}, and suppose that
$z\in\rho(\Hte) \cap \rho(H_{\theta_0',\theta_R'})$.
Then, with $\Lates (\cdot)$ introduced in \eqref{2.6}, and with $\Ste$ defined
as in \eqref{2.14},
\begin{align}
& (H_{\theta_0',\theta_R'} -z I)^{-1} = (\Hte -z I)^{-1}  \no \\
& \quad - \big[\gamma_{\ol{\theta_0^{\prime}},\ol{\theta_R^{\prime}}}
((\Hte)^* - {\ol z} I)^{-1}\big]^* S_{\te'_0-\te_0,\te'_R-\te_R}^{-1}   \lb{6.21} \\
 & \qquad \times \Big[\Lates (z)\Big]^{-1}
 \big[\gamma_{\theta_0^{\prime},\theta_R^{\prime}}(\Hte - z I)^{-1}\big],
\quad \te_0 \neq \te_0', \; \te_R \neq \te_R'.  \no \\
& (H_{\theta_0,\theta_R'} -z I)^{-1} = (\Hte -z I)^{-1}  \no \\
& \quad - \big[\gamma_{\ol{\theta_0},\ol{\theta_R^{\prime}}}
((\Hte)^* - {\ol z} I)^{-1}\big]^* [\sin(\te'_R-\te_R)]^{-1} P_2   \lb{6.22} \\
 & \qquad \times \Big[\Lambda_{\te_0,\te_R}^{\te_0,\te_R'} (z)\Big]^{-1} P_2
 \big[\gamma_{\theta_0,\theta_R^{\prime}}(\Hte - z I)^{-1}\big],
\quad \te_R \neq \te_R',  \no \\
& (H_{\theta_0',\theta_R} -z I)^{-1} = (\Hte -z I)^{-1}  \no \\
& \quad - \big[\gamma_{\ol{\theta_0'},\ol{\theta_R}}
((\Hte)^* - {\ol z} I)^{-1}\big]^* [\sin(\te_0'-\te_0)]^{-1} P_1  \lb{6.23} \\
 & \qquad \times \Big[\Lambda_{\te_0,\te_R}^{\te_0',\te_R} (z)\Big]^{-1} P_1
 \big[\gamma_{\theta_0',\theta_R}(\Hte - z I)^{-1}\big],
\quad \te_0 \neq \te_0'.  \no
\end{align}
\end{theorem}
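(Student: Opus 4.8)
The plan is to follow the standard route for Krein-type resolvent formulas. Fix $z\in\rho(\Hte)\cap\rho(H_{\te_0',\te_R'})$ and, for $g\in L^2((0,R); dx)$, set $w_g = \big[(H_{\te_0',\te_R'} - z I)^{-1} - (\Hte - z I)^{-1}\big]g$. Since both resolvents map $g$ to a solution of $-u''+Vu-zu=g$, the difference $w_g$ satisfies $-w_g''+V w_g = z w_g$, that is, $w_g$ lies in the two-dimensional solution space of \eqref{2.3}. By \eqref{6.5a} the trace $\gamma_{\te_0,\te_R}\big((\Hte - z I)^{-1}g\big)$ vanishes, while $\gamma_{\te_0',\te_R'}\big((H_{\te_0',\te_R'} - z I)^{-1}g\big)=0$ by the definition of $H_{\te_0',\te_R'}$; hence $\gamma_{\te_0',\te_R'}(w_g) = - \gamma_{\te_0',\te_R'}\big((\Hte - z I)^{-1}g\big)$, whose components are given explicitly in Lemma \ref{l6.1}. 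Since $z\in\rho(H_{\te_0',\te_R'})$, Lemma \ref{l2.2} (applied with $(\te_0',\te_R')$ in place of $(\te_0,\te_R)$) guarantees that a homogeneous solution is uniquely determined by its $\gamma_{\te_0',\te_R'}$-trace, so the whole task reduces to writing the corresponding solution operator in closed form.

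The key step is to show that $K(z) = \big[\gamma_{\ol{\te_0'},\ol{\te_R'}}((\Hte)^* - \bz I)^{-1}\big]^* \in \cB(\bbC^2, L^2((0,R); dx))$ maps $\bbC^2$ into the homogeneous solution space, for then Theorem \ref{t3.5} identifies it, up to a matrix factor, with that solution operator. Let $H_{\min}$ denote the minimal operator associated with $-d^2/dx^2 + V$. Since $C_0^\infty((0,R))\subset\dom(\Hte)$ one has $H_{\min}\subseteq\Hte$, and taking adjoints $H_{\min}\subseteq(\Hte)^*\subseteq H_{\max}$. For $v\in\bbC^2$ and $\psi\in\dom(H_{\min})$, the definition of the adjoint gives $\big(K(z)v, (H_{\min} - \bz I)\psi\big)_{L^2((0,R); dx)} = \big(v, \gamma_{\ol{\te_0'},\ol{\te_R'}}[((\Hte)^* - \bz I)^{-1}(H_{\min} - \bz I)\psi]\big)_{\bbC^2}$. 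Using $(H_{\min} - \bz I)\psi = ((\Hte)^* - \bz I)\psi$ together with the fact that $\psi$ and $\psi'$ vanish at both endpoints, so that $\gamma_{\ol{\te_0'},\ol{\te_R'}}(\psi)=0$, the right-hand side collapses to $\big(v, \gamma_{\ol{\te_0'},\ol{\te_R'}}(\psi)\big)_{\bbC^2}=0$. Thus $K(z)v\perp\ran(H_{\min} - \bz I)$, that is, $K(z)v\in\ker(H_{\max} - z I)$, which is precisely the assertion that $K(z)v$ solves \eqref{2.3}.

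Applying $\gamma_{\te_0',\te_R'}$ to $K(z)$ and invoking Theorem \ref{t3.5} now yields $\gamma_{\te_0',\te_R'}K(z) = \Lates(z) S_{\te_0'-\te_0,\te_R'-\te_R}$. When $\te_0\neq\te_0'$ and $\te_R\neq\te_R'$ the diagonal matrix $S_{\te_0'-\te_0,\te_R'-\te_R}$ is invertible and $\Lates(z)$ is invertible by \eqref{2.7c}, so the solution operator of the previous paragraph equals $K(z) S_{\te_0'-\te_0,\te_R'-\te_R}^{-1}[\Lates(z)]^{-1}$. Feeding the boundary vector $-\gamma_{\te_0',\te_R'}\big((\Hte - z I)^{-1}g\big)$ through it gives $w_g = -K(z) S_{\te_0'-\te_0,\te_R'-\te_R}^{-1}[\Lates(z)]^{-1}\gamma_{\te_0',\te_R'}(\Hte - z I)^{-1}g$, which is exactly \eqref{6.21}.

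For the degenerate formulas \eqref{6.22} and \eqref{6.23} the matrix $S_{\te_0'-\te_0,\te_R'-\te_R}$ is singular, and I would argue componentwise instead. When $\te_0=\te_0'$, the factor $\sin(\te_0'-\te_0)$ in \eqref{6.2} forces the first component of $\gamma_{\te_0,\te_R'}\big((\Hte - z I)^{-1}g\big)$ to vanish, so only the second component of the boundary data is active; by \eqref{2.30} the matrix $\Lambda_{\te_0,\te_R}^{\te_0,\te_R'}(z) S_{0,\te_R'-\te_R}$ is diagonal with a single nonzero $(2,2)$ entry, and inverting it on $\ran(P_2)$ with the scalar factor $[\sin(\te_R'-\te_R)]^{-1}$ reproduces \eqref{6.22}; the symmetric case $\te_R=\te_R'$ yields \eqref{6.23} with $P_1$ and $[\sin(\te_0'-\te_0)]^{-1}$ in place of $P_2$ and $[\sin(\te_R'-\te_R)]^{-1}$. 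The main obstacle is the verification in the second paragraph that $K(z)$ lands in the solution space; once this minimal/maximal-operator orthogonality argument is secured, the remainder is bookkeeping with the already established Theorem \ref{t3.5}, Lemma \ref{l6.1}, and the composition laws \eqref{2.7b}, \eqref{2.7c}.
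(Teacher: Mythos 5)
Your overall strategy works, and it is worth noting that the paper itself does not prove Theorem \ref{t6.3} at all: it recalls the result from \cite{CGM10}, where (judging from the formulas quoted here as \eqref{7.43} and \eqref{7.28c}, i.e., \cite[eq.\ (3.54)]{CGM10}) the operator $K(z)=\big[\gamma_{\ol{\te_0'},\ol{\te_R'}}((\Hte)^*-\bz I)^{-1}\big]^*$ is computed \emph{explicitly} as a combination of $u_{+,\te_R}(z,\cdot)$ and $u_{-,\te_0}(z,\cdot)$, so that its range visibly lies in the homogeneous solution space. Your minimal/maximal-operator orthogonality argument replaces that Green's-function computation by an abstract one, which is a genuine alternative; the remaining bookkeeping is correct: $w_g$ solves \eqref{2.3}, its trace is $-\gamma_{\te_0',\te_R'}(\Hte-zI)^{-1}g$ by \eqref{6.5a} and the domain condition for $H_{\te_0',\te_R'}$, Theorem \ref{t3.5} gives $\gamma_{\te_0',\te_R'}K(z)=\Lates(z)S_{\te_0'-\te_0,\te_R'-\te_R}$, uniqueness via Lemma \ref{l2.2} (with $z\in\rho(H_{\te_0',\te_R'})$) and invertibility \eqref{2.7c} yield \eqref{6.21}, and in the degenerate cases \eqref{2.30} makes $\Lambda_{\te_0,\te_R}^{\te_0,\te_R'}(z)$ triangular, so that $P_2\big[\Lambda_{\te_0,\te_R}^{\te_0,\te_R'}(z)\big]^{-1}P_2=\big(\Lambda_{\te_0,\te_R}^{\te_0,\te_R'}(z)\big)_{2,2}^{-1}P_2$ with nonvanishing $(2,2)$ entry, which is exactly what \eqref{6.22} (and symmetrically \eqref{6.23}) requires.

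There is, however, a conjugation error in your pivotal second paragraph that makes a step fail as written, and Theorem \ref{t6.3} lives in the non-self-adjoint setting of Section \ref{s2} where $V$ may be complex. By the explicit description in Section \ref{s2}, $(\Hte)^*$ acts as $-d^2/dx^2+\ol{V}$, not $-d^2/dx^2+V$. Hence, with your $H_{\min}$ built from $-d^2/dx^2+V$, the identity $(H_{\min}-\bz I)\psi=((\Hte)^*-\bz I)\psi$ is false unless $V$ is real-valued, and moreover $\ran(H_{\min}-\bz I)^{\perp}=\ker(H_{\min}^*-zI)$ is the kernel of the \emph{maximal} operator with potential $\ol{V}$, i.e., the wrong solution space. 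The repair is immediate: take $H_{\min}$ to be the minimal operator associated with $-d^2/dx^2+\ol{V}$; then $H_{\min}\subseteq(\Hte)^*$ follows directly from the paper's formula for $\dom((\Hte)^*)$ (any $\psi$ with $\psi(0)=\psi'(0)=\psi(R)=\psi'(R)=0$ satisfies $\gamma_{\ol{\te_0},\ol{\te_R}}(\psi)=0$), one has $(H_{\min})^*=H_{\max}$ with potential $V$, and your orthogonality computation then correctly places $K(z)v$ in $\ker(H_{\max}-zI)$. Two smaller slips should also be fixed: ``taking adjoints'' of $H_{\min}\subseteq\Hte$ yields $(\Hte)^*\subseteq H_{\min}^*$, not the inclusion $H_{\min}\subseteq(\Hte)^*$ you assert; and $C_0^\infty((0,R))\subset\dom(\Hte)$ can fail for $V\in L^1((0,R);dx)$ that is not locally square integrable, since $-g''+Vg$ need not lie in $L^2((0,R);dx)$, which is another reason to obtain the inclusion from the paper's explicit formula for $(\Hte)^*$ rather than via test functions.
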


\section{Boundary Data Maps, Perturbation Determinants
and Trace Formulas for Schr\"odinger Operators}  \label{s8}

In this section we present our second group of new results, the connection
between boundary data maps, appropriate perturbation determinants, and
trace formulas in the context of self-adjoint one-dimensional Schr\"odinger
operators.

While Theorem \ref{t7.8} appears to be an interesting extension of the classical result, Theorem \ref{t7.1}, it is in general, that is, in the context of
non-self-adjoint operators, not a simple task to verify the hypotheses
\eqref{7.18}--\eqref{7.20} as they involve square root domains. In particular,
it appears to be unknown whether or not $\dom(\Hte)$ and
$\dom((\Hte)^*)$ coincide and hence coincide with $\dom(Q_{\Hte})$, the form
domain of $\Hte$ (assuming $\Hte$ to be non-self-adjoint): This question amounts to solving ``Kato's problem'' in the special case of the non-self-adjoint Schr\"odinger operator $\Hte$ (cf., e.g., and \cite{ADM96}, 
\cite{AMN97}, \cite{Ka62}, \cite{Li62}, \cite{Mc72},
\cite{Mc85}, and \cite{Mc90}), a topic we will return to elsewhere.

To be on safe ground, we now confine ourselves to the special case
of self-adjoint operators $\Hte$ for the remainder of this section: Necessary
and sufficient conditions for $\Hte$ to be self-adjoint are the conditions
\begin{equation}
V\in L^1((0,R); dx) \, \text{ is real-valued},    \lb{7.24}
\end{equation}
and
\begin{equation}
\te_0, \te_R \in [0,2\pi),      \lb{7.25}
\end{equation}
assumed from now on. Then the 2nd representation theorem for densely
defined, semibounded, closed quadratic forms (cf.\ \cite[Sect.\ 6.2.6]{Ka80})
yields that
\begin{align}
\begin{split}
\dom((\Hte - z I_{\cH})^{1/2}) = \dom(|\Hte|^{1/2}) = \dom(Q_{\Hte}),& \\
\quad \te_0, \te_R \in [0,2\pi), \; z \in \bbC\backslash [e_{\te_0,\te_R},\infty),&
\lb{7.26}
\end{split}
\end{align}
where we abbreviated
\begin{equation}
e_{\te_0,\te_R} = \inf (\sigma(\Hte)), \quad \te_0, \te_R \in [0,2\pi).
\end{equation}

Here $(\Hte - z I_{\cH})^{1/2}$ is defined with the help of the spectral theorem and
a choice of a branch cut along $[e_{\te_0,\te_R},\infty)$.
A comparison with \eqref{2.2f}--\eqref{2.2i}, employing the fact that
\begin{align}
& \dom((H_{\theta_0',\theta_R'} - z I_{\cH})^{1/2})
= \dom(|H_{\theta_0',\theta_R'}|^{1/2}) = H^1((0,R)),    \\
& \hspace*{2.58cm} \theta_0',\theta_R' \in [0,2\pi)\backslash\{0, \pi\},
\; z \in \bbC\backslash [e_{\te_0',\te_R'},\infty),  \no \\
& \dom((\Hte - z I_{\cH})^{1/2}) =
\dom(|\Hte|^{1/2}) \subseteq H^1((0,R)),   \\
& \hspace*{3.64cm} \te_0, \te_R \in [0,2\pi),
\; z \in \bbC\backslash [e_{\te_0,\te_R},\infty),   \no
\end{align}
then shows that
\begin{align}
& \ol{(H_{\theta_0',\theta_R'} - z I)^{1/2} (\Hte - z I)^{-1}
(H_{\theta_0',\theta_R'} - z I)^{1/2}}    \no \\
& \quad = \big[(H_{\theta_0',\theta_R'} - z I)^{1/2} (\Hte - z I)^{-1/2}\big]    \no \\
& \qquad \times
\big[(H_{\theta_0',\theta_R'} - {\ol z} I)^{1/2} (\Hte - {\ol z} I)^{-1/2} \big]^*
\in \cB\big(L^2((0,R); dx)\big),    \lb{7.27} \\
& \hspace*{1.7cm} \theta_0',\theta_R' \in [0,2\pi)\backslash\{0, \pi\}, \;
\theta_0,\theta_R \in [0,2\pi), \; z \in \bbC\backslash[e_0,\infty),    \no
\end{align}
where we introduced the abbreviation
\begin{equation}
e_0 =\inf\big(\sigma(\Hte) \cup \sigma( H_{\theta_0',\theta_R'})\big)
= \min (e_{\te_0,\te_R}, e_{\te_0',\te_R'}).
\end{equation}

Moreover, applying Theorem \ref{t6.3} one concludes that actually,
\begin{align}
& \ol{(H_{\theta_0',\theta_R'} - z I)^{1/2} (\Hte - z I)^{-1}
(H_{\theta_0',\theta_R'} - z I)^{1/2}} - I    \no \\
& \quad = - \ol{(H_{\theta_0',\theta_R'} - z I)^{1/2}
\big[(H_{\theta_0',\theta_R'} - z I)^{-1}
- (\Hte - z I)^{-1}\big] (H_{\theta_0',\theta_R'} - z I)^{1/2}}     \no \\
& \qquad \;\, \text{is a finite-rank (and hence a trace class operator)
on $L^2((0,R); dx)$,}    \lb{7.28} \\
& \hspace*{2.95cm}   \theta_0',\theta_R' \in [0,2\pi)\backslash\{0, \pi\}, \;
\theta_0,\theta_R \in [0,2\pi), \; z \in \bbC\backslash[e_0,\infty).   \no
\end{align}

To see the finite-rank property one can argue as follows: By
\eqref{6.1}--\eqref{6.3}, the $\bbC^2$-vector
$\gamma_{\te_0^{\prime},\te_R^{\prime}}(\Hte - z I)^{-1} f$,
$f \in L^2\big((0,R); dx\big)$, is of the type
\begin{equation}
\gamma_{\te_0^{\prime},\te_R^{\prime}}(\Hte - z I)^{-1} f
= \begin{bmatrix}
C_1 \big(\ol{u_{+,\te_R}(z,\cdot)}, f\big)_{L^2((0,R); dx)} \\[2mm]
C_2 \big(\ol{u_{-,\te_0}(z,\cdot)}, f\big)_{L^2((0,R); dx)}
\end{bmatrix},    \lb{7.28a}
\end{equation}
for some $C_j=C_j(z,\te_0',\te_R',\te_0,\te_R)$, $j=1,2$, and hence,
since obviously $u_{+,\te_R}(z,\cdot)$ and $u_{-,\te_0}(z,\cdot)$ belong to
$H^1((0,R))$,
\begin{align}
& \gamma_{\te_0^{\prime},\te_R^{\prime}}(\Hte - z I)^{-1}
(H_{\te_0',\te_R'} -z I)^{1/2} g    \no \\
& \quad = \begin{bmatrix}
C_1 \big(\ol{u_{+,\te_R}(z,\cdot)}, (H_{\te_0',\te_R'} -z I)^{1/2}
g\big)_{L^2((0,R); dx)} \\[2mm]
C_2 \big(\ol{u_{-,\te_0}(z,\cdot)}, (H_{\te_0',\te_R'} -z I)^{1/2}
g\big)_{L^2((0,R); dx)} \end{bmatrix}  \no \\
& \quad = \begin{bmatrix}
C_1 \big([(H_{\te_0',\te_R'} -z I)^{1/2}]^* \ol{u_{+,\te_R}(z,\cdot)},
g\big)_{L^2((0,R); dx)} \\[2mm]
C_2 \big([(H_{\te_0',\te_R'} -z I)^{1/2}]^* \ol{u_{-,\te_0}(z,\cdot)},
g\big)_{L^2((0,R); dx)} \end{bmatrix}, \quad g \in H^1((0,R)),  \lb{7.28b}
\end{align}
extends by continuity to all $g \in L^2\big((0,R); dx\big)$. Similarly, using
\cite[eq.\ (3.54)]{CGM10}, one infers for any $[a_0 \;\, a_R]^\top \in\bbC^2$ that
\begin{align}
\begin{split}
& \big[\gamma_{\te_0',\te_R'} ((\Hte)^*- {\ol z} I)^{-1}\big]^*
[a_0 \;\, a_R]^\top     \\
& \quad = D_1 a_0 u_{+,\te_R}(z,\cdot)
+ D_2 a_R u_{-,\te_0}(z,\cdot) \in H^1((0,R)),   \lb{7.28c}
\end{split}
\end{align}
for some $D_j=D_j(z,\te_0',\te_R',\te_0,\te_R)$, $j=1,2$. Consequently,
\begin{equation}
(H_{\te_0',\te_R'} -z I)^{1/2}
\big[\gamma_{\te_0',\te_R'} ((\Hte)^*- {\ol z} I)^{-1}\big]^*
[a_0 \;\, a_R]^\top \in L^2\big((0,R); dx\big)    \lb{7.28d}
\end{equation}
is well-defined for all $[a_0 \;\, a_R]^\top \in\bbC^2$. Thus, combining \eqref{7.28b}
(for arbitrary $g \in L^2\big((0,R); dx\big)$) and \eqref{7.28d} (for arbitrary
$[a_0 \;\, a_R]^\top \in\bbC^2$) with the finite-rank property of the second terms on
the right-hand sides in \eqref{6.21}--\eqref{6.23} yields the asserted finite-rank
property in \eqref{7.28}.

Thus, the Fredholm determinant, more precisely, the symmetrized
perturbation determinant,
\begin{align}
\begin{split}
{\det}_{L^2((0,R); dx)} \Big(\ol{(H_{\theta_0',\theta_R'} - z I)^{1/2}
(\Hte - z I)^{-1} (H_{\theta_0',\theta_R'} - z I)^{1/2}}\Big),&   \\
\theta_0,\theta_R \in [0,2\pi), \;
\theta_0',\theta_R' \in (0,2\pi)\backslash\{\pi\}, \;
z \in \bbC\backslash[e_0,\infty),&     \lb{7.29}
\end{split}
\end{align}
is well-defined, and an application of Theorem \ref{t7.8} to
$H_{\theta_0',\theta_R'}$ and $\Hte$ yields
\begin{align}
& {\tr}_{L^2((0,R); dx)}\big((H_{\theta_0',\theta_R'} - z I)^{-1}
- (\Hte - z I)^{-1}\big)    \no \\
& \quad =- \f{d}{dz} \ln\Big({\det}_{L^2((0,R); dx)}
\Big(\ol{(H_{\theta_0',\theta_R'} - z I)^{1/2}
(\Hte - z I)^{-1} (H_{\theta_0',\theta_R'} - z I)^{1/2}}\Big)\Big),    \no \\
& \hspace*{3.1cm}   \theta_0,\theta_R \in [0,2\pi), \;
\theta_0',\theta_R' \in (0,2\pi)\backslash\{\pi\}, \;
z \in \bbC\backslash[e_0,\infty),       \lb{7.30}
\end{align}
whenever ${\det}_{L^2((0,R); dx)}\Big(\ol{(H_{\theta_0',\theta_R'} - z I)^{1/2}
(\Hte - z I)^{-1} (H_{\theta_0',\theta_R'} - z I)^{1/2}}\Big) \neq 0$.

Next, we show that the symmetrized (Fredholm) perturbation determinant
\eqref{7.29} can essentially be reduced to the $2 \times 2$ determinant of the
boundary data map $\Lates (z)$:

\begin{theorem} \lb{t8.1}
Assume that $\te_0, \te_R \in [0, 2 \pi)$, $\te_0', \te_R' \in (0, 2 \pi)\backslash\{\pi\}$,
and suppose that $V$ satisfies \eqref{7.24}. Let $\Hte$ and $H_{\theta_0',\theta_R'}$
be defined as in \eqref{2.2a}. Then,
\begin{align}
\begin{split}
& {\det}_{L^2((0,R); dx)}\Big(\ol{(H_{\theta_0',\theta_R'} - z I)^{1/2}
(\Hte - z I)^{-1} (H_{\theta_0',\theta_R'} - z I)^{1/2}}\Big)     \\
& \quad = \f{\sin(\theta_0) \sin(\theta_R)}{\sin(\theta_0') \sin(\theta_R')} \,
{\det}_{\bbC^2} \Big(\Lates (z)\Big),
\quad z \in \bbC\backslash[e_0,\infty).     \lb{7.41}
\end{split}
\end{align}
\end{theorem}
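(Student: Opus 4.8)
The plan is to use the fact, already isolated in \eqref{7.28}, that the operator under the determinant differs from the identity by a finite-rank operator, so that the (infinite-dimensional) Fredholm determinant on the left of \eqref{7.41} collapses to a $2\times2$ determinant, which I then identify with the right-hand side. First, by \eqref{7.22} applied with $A=H_{\theta_0',\theta_R'}$ and $A_0=\Hte$, the left-hand side equals ${\det}_{L^2((0,R);dx)}(I-K)$ with $K=\ol{(H_{\theta_0',\theta_R'}-zI)^{1/2}[(H_{\theta_0',\theta_R'}-zI)^{-1}-(\Hte-zI)^{-1}](H_{\theta_0',\theta_R'}-zI)^{1/2}}$; the verification carried out in \eqref{7.26}--\eqref{7.28} guarantees that $K$ is finite-rank and that this determinant is well defined.

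Next I would make the rank-two structure explicit via the Krein-type resolvent formula \eqref{6.21}: one has $(H_{\theta_0',\theta_R'}-zI)^{-1}-(\Hte-zI)^{-1}=-\,G\,S_{\te_0'-\te_0,\te_R'-\te_R}^{-1}[\Lates(z)]^{-1}F$, where $F=\gamma_{\te_0',\te_R'}(\Hte-zI)^{-1}$ and $G=[\gamma_{\ol{\te_0'},\ol{\te_R'}}((\Hte)^*-\ol zI)^{-1}]^*$ are bounded maps $L^2((0,R);dx)\to\bbC^2$ and $\bbC^2\to L^2((0,R);dx)$, respectively. By \eqref{7.28b} and \eqref{7.28d} the outer square roots attach to $F$ and to $G$ as bounded operators, so that $K=A_z\,N\,B_z$ with $A_z=\ol{(H_{\theta_0',\theta_R'}-zI)^{1/2}G}\in\cB(\bbC^2,L^2((0,R);dx))$, $B_z=\ol{F\,(H_{\theta_0',\theta_R'}-zI)^{1/2}}\in\cB(L^2((0,R);dx),\bbC^2)$, and $N=-S_{\te_0'-\te_0,\te_R'-\te_R}^{-1}[\Lates(z)]^{-1}$. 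The identity ${\det}_{L^2((0,R);dx)}(I-A_zNB_z)={\det}_{\bbC^2}(I_2-N\,B_zA_z)$ then reduces everything to the $2\times2$ determinant of $I_2-N\Gamma(z)$, with $\Gamma(z)=B_zA_z$.

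The crux is the computation of $\Gamma(z)$. Its entries pair $(H_{\theta_0',\theta_R'}-zI)^{1/2}$ applied to multiples of $u_{+,\te_R}(z,\cdot)$ and $u_{-,\te_0}(z,\cdot)$ against each other; here one must not collapse the two square roots into $(H_{\theta_0',\theta_R'}-zI)$, since these solutions lie in the form domain $H^1((0,R))=\dom(Q_{H_{\theta_0',\theta_R'}})$ but in general not in $\dom(H_{\theta_0',\theta_R'})$. Instead I would evaluate each entry as a value of the sesquilinear form $Q_{H_{\theta_0',\theta_R'}-zI}$ from \eqref{2.2f} on these $H^1$-functions and integrate by parts using $-u_{\pm}''+Vu_{\pm}=zu_{\pm}$: the bulk integral cancels and each entry reduces to boundary data, namely a Wronskian-type combination of $u_{\pm}(z,0),u_{\pm}'(z,0),u_{\pm}(z,R),u_{\pm}'(z,R)$ and $\cot(\te_0'),\cot(\te_R')$. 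Invoking Theorem~\ref{t3.5} in the form $\gamma_{\te_0',\te_R'}G=\Lates(z)\,S_{\te_0'-\te_0,\te_R'-\te_R}$ lets me express these boundary data through $\Lates(z)$ itself, so that the factors $[\Lates(z)]^{-1}$ and $S_{\te_0'-\te_0,\te_R'-\te_R}^{-1}$ inside $N$ are conjugated away, collapsing ${\det}_{\bbC^2}(I_2-N\Gamma(z))$ to a determinant built only from the scalar coefficients of \eqref{6.2}, \eqref{6.3} and the endpoint values of $u_{\pm}$.

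It then remains to evaluate this last $2\times2$ determinant. Inserting the boundary conditions \eqref{2.5a}, \eqref{2.5b} and the Wronskian identities \eqref{2.5c}, \eqref{2.5d}, one finds that the two denominators in the representation \eqref{2.20a} of $\Lates(z)$ equal $-W\sin(\te_0)/u_{-,\te_0}(z,0)$ and $-W\sin(\te_R)/u_{+,\te_R}(z,R)$ (with $W$ the Wronskian of \eqref{2.5c}, \eqref{2.5d}); it is precisely these denominators that produce the prefactor, and a direct computation turns the determinant into $\f{\sin(\te_0)\sin(\te_R)}{\sin(\te_0')\sin(\te_R')}\,{\det}_{\bbC^2}(\Lates(z))$, with $\det_{\bbC^2}\Lates(z)$ read off from \eqref{2.20a} (equivalently from Lemma~\ref{l2.8}). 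The main obstacle I anticipate is exactly this final trigonometric and Wronskian bookkeeping — following every sign and $\sin$/$\cos$ factor so as to land on the stated prefactor — together with the degenerate Dirichlet endpoints $\te_0\in\{0,\pi\}$ or $\te_R\in\{0,\pi\}$: there $\sin(\te_0)=0$ (resp.\ $\sin(\te_R)=0$), so the coefficient \eqref{6.2} (resp.\ \eqref{6.3}) must be taken in its second branch and $u_{-,\te_0}(z,0)=0$ (resp.\ $u_{+,\te_R}(z,R)=0$), forcing both sides of \eqref{7.41} to vanish; these cases must be checked directly. As an independent consistency check, one can match $-\f{d}{dz}\ln$ of the two sides, using \eqref{7.30} on the left and Lemma~\ref{l2.8} together with the fact that the zeros of $\Delta(\,\cdot\,,R,\te_0,\te_R)$ are exactly the eigenvalues of $\Hte$ on the right.
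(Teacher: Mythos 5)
Your proposal is correct and follows essentially the same route as the paper's proof: reduce via the Krein-type formula \eqref{6.21} and cyclicity to a $2\times 2$ determinant $\det_{\bbC^2}\big(I_2 - S^{-1}\Lambda(z)B(z)^*B(z)\big)$, evaluate the Gram-type entries $\big((H_{\theta_0',\theta_R'}-zI)^{1/2}u_\pm,(H_{\theta_0',\theta_R'}-zI)^{1/2}u_\pm\big)$ through the sesquilinear form \eqref{2.2f} and one integration by parts (exactly the paper's \eqref{7.61}--\eqref{7.63}, including your correct insistence on not collapsing the square roots since $u_\pm$ lie only in the form domain), and then match against $\Lambda(z)S$ computed from \eqref{2.20a} with the Wronskian identities \eqref{7.60} to extract the prefactor, with the degenerate Dirichlet cases handled separately as in the paper (cf.\ Lemma \ref{l8.2}, where both sides indeed vanish).
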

\begin{proof}
Let $z \in \bbC\backslash[e_0,\infty)$.
By \eqref{2.2A} and \eqref{2.2B} it suffices to consider
$\te_0, \te_R \in [0, \pi)$, $\te_0', \te_R' \in (0, \pi)$. Moreover, we will assume
that $\te_0 \neq 0$ and $\te_R \neq 0$. The cases where $\te_0 = 0$ and/or
$\te_R = 0$ follow along the same lines.

In addition, simplifying the proof a bit, we will choose $z<0$, $|z|$ sufficiently large, and
introduce the following convenient abbreviations:
\begin{align}
& H = \Hte, \quad H' = H_{\theta_0',\theta_R'}, \quad
\gamma = \gamma_{\te_0,\te_R}, \quad
\gamma' = \gamma_{\te_0^{\prime},\te_R^{\prime}},    \no \\
& \Lambda (z) = \Lambda_{\te_0',\te_R'}^{\te_0,\te_R} (z),
\quad S = S_{\te_0-\te_0', \te_R-\te_R'} = \begin{pmatrix} \sin(\te_0-\te_0')
& 0 \\ 0 & \sin(\te_R-\te_R') \end{pmatrix},    \no \\
& \check u_+ (z,\cdot) = u_{+,\te_R'} (z,\cdot),
\quad \check u_- (z,\cdot) = u_{-,\te_0'} (z,\cdot),   \lb{7.42} \\
& \check W(z) = W(\check u_+ (z,\cdot), \check u_- (z,\cdot))
= W(u_{+,\te_R'} (z,\cdot), u_{-,\te_0'} (z,\cdot)),    \no \\
& B(z) = (H' - z I)^{1/2} \big[\gamma (H' - z I)^{-1}\big]^* \in
\cB\big(\bbC^2, L^2((0,R); dx)\big).     \no
\end{align}
That $B(z) \in \cB\big(\bbC^2, L^2((0,R); dx)\big)$ follows as in \eqref{7.28c},
\eqref{7.28d}. In addition, we recall that
\begin{align}
& \big[\gamma (H' - z I)^{-1}\big]^* (a_0 \; a_R)^\top
= \f{\sin(\te_0)}{\check W(z)} [\check u'_- (z,0) + \cot(\te_0) \check u_-(z,0)]
a_0 \check u_+ (z,\cdot)    \no \\
& \quad - \f{\sin(\te_R)}{\check W(z)} [\check u'_+ (z,R) - \cot(\te_R) \check u_+(z,R)]
a_R \check u_- (z,\cdot), \quad  (a_0 \; a_R)^\top \in\bbC^2     \lb{7.43}
\end{align}
(cf.\ \cite[eq.\ (3.54)]{CGM10}). Thus,
\begin{align}
B(z) (a_0 \; a_R)^\top
&= \f{\sin(\te_0)}{\check W(z)} [\check u'_- (z,0) + \cot(\te_0) \check u_-(z,0)]
a_0 (H' - z I)^{1/2} \check u_+ (z,\cdot)    \no \\
& \quad - \f{\sin(\te_R)}{\check W(z)} [\check u'_+ (z,R) - \cot(\te_R) \check u_+(z,R)]
a_R (H' - z I)^{1/2} \check u_- (z,\cdot),    \no \\
& \hspace*{6.5cm}  (a_0 \; a_R)^\top \in\bbC^2,     \lb{7.44}
\end{align}
and hence $B(z)^* \in \cB\big(L^2((0,R); dx), \bbC^2\big)$ is given by
\begin{align}
& B(z)^* f \no \\
& \quad = \begin{pmatrix}
\f{\sin(\te_0)}{\check W(z)} [\check u'_- (z,0) + \cot(\te_0) \check u_-(z,0)]
((H' - z I)^{1/2} \check u_+ (z,\cdot), f)_{L^2((0,R); dx)}   \\[2mm]
- \f{\sin(\te_R)}{\check W(z)} [\check u'_+ (z,R) - \cot(\te_R) \check u_+(z,R)]
((H' - z I)^{1/2} \check u_- (z,\cdot), f)_{L^2((0,R); dx)}
\end{pmatrix},     \no \\
& \hspace*{8.2cm}   f \in L^2((0,R); dx).     \lb{7.45}
\end{align}
Using the following version of the Krein-type resolvent formula \eqref{6.21}
\begin{align}
(H - z I)^{-1} = (H' - z I)^{-1} - \big[\gamma (H' - z I)^{-1}\big]^* S^{-1}
\Lates \big[\gamma (H' - z I)^{-1}\big],    \lb{7.46}
\end{align}
one obtains
\begin{align}
& \ol{(H' - z I)^{1/2} (H - z I)^{-1} (H' - z I)^{1/2}}    \no \\
& \quad = I  - \ol{(H' - z I)^{1/2} \big[ (H' - z I)^{-1} - (H - z I)^{-1}\big] (H' - z I)^{1/2}}
\no \\
& \quad = I - B(z) S^{-1} \Lates B(z)^*,     \lb{3.47}
\end{align}
and thus,
\begin{align}
& {\det}_{L^2((0,R); dx)}\Big(\ol{(H' - z I)^{1/2} (H - z I)^{-1} (H' - z I)^{1/2}}\Big)    \no \\
& \quad = {\det}_{L^2((0,R); dx)}\Big(I - B(z) S^{-1} \Lates B(z)^*\Big)   \no \\
& \quad = {\det}_{\bbC^2}\Big(I_2 - S^{-1} \Lates B(z)^* B(z)\Big),     \lb{7.48}
\end{align}
using cyclicity for determinants .

Next we turn to the computation of the $2 \times 2$ matrix $B(z)^* B(z)$: By equations \eqref{7.44}
and \eqref{7.45} one infers
\begin{align}
& B(z)^* B(z) = \big(C_{j,k}(z)\big)_{j,k=1,2},    \lb{7.49} \\
& C_{1,1}(z) = \f{\sin^2(\te_0)}{\check W(z)^2}
[\check u_-' (z,0) + \cot(\te_0) \check u_- (z,0)]^2    \no \\
& \hspace*{1.55cm} \times ((H' -z I)^{1/2} \check u_+ (z,\cdot),
(H' -z I)^{1/2} \check u_+ (z,\cdot))_{L^2((0,R); dx)},    \lb{7.50} \\
& C_{1,2}(z) = - \f{\sin(\te_0) \sin(\te_R)}{\check W(z)^2}    \no \\
& \hspace*{1.75cm} \times [\check u_-' (z,0) + \cot(\te_0) \check u_- (z,0)]
[\check u_+' (z,R) - \cot(\te_R) \check u_+ (z,R)]   \no \\
& \hspace*{1.75cm} \times ((H' -z I)^{1/2} \check u_+ (z,\cdot),
(H' -z I)^{1/2} \check u_- (z,\cdot))_{L^2((0,R); dx)},    \lb{7.51} \\
& C_{2,1}(z) = - \f{\sin(\te_0) \sin(\te_R)}{\check W(z)^2}    \no \\
& \hspace*{1.75cm} \times [\check u_-' (z,0) + \cot(\te_0) \check u_- (z,0)]
[\check u_+' (z,R) - \cot(\te_R) \check u_+ (z,R)]   \no \\
& \hspace*{1.75cm} \times ((H' -z I)^{1/2} \check u_- (z,\cdot),
(H' -z I)^{1/2} \check u_+ (z,\cdot))_{L^2((0,R); dx)},    \lb{7.52} \\
& C_{2,2}(z) = \f{\sin^2(\te_R)}{\check W(z)^2}
[\check u_+' (z,R) - \cot(\te_R) \check u_+ (z,R)]^2    \no \\
& \hspace*{1.55cm} \times ((H' -z I)^{1/2} \check u_- (z,\cdot),
(H' -z I)^{1/2} \check u_- (z,\cdot))_{L^2((0,R); dx)}.     \lb{7.53}
\end{align}

A straightforward, although rather tedious computation then yields the following
simplification of $C_{j,k}(z)$, $j,k=1,2$, and hence of $B(z)^* B(z)$:
\begin{align}
& C_{1,1}(z) = - \f{\sin^2(\te_0 - \te_0')}{\sin^2(\te_0')}
\f{1}{\check u_+'(z,0) + \cot(\te_0')},    \lb{7.54} \\
& C_{1,2}(z) = \f{\sin(\te_0 - \te_0') \sin(\te_R - \te_R')}{\sin(\te_0') \sin(\te_R')}
\f{\check u_-(z,0)}{\check u_-'(z,R) - \cot(\te_R')}     \lb{7.55} \\
& \hspace*{1.1cm} = - \f{\sin(\te_0 - \te_0') \sin(\te_R - \te_R')}{\sin(\te_0') \sin(\te_R')}
\f{\check u_+(z,R)}{\check u_+'(z,0) + \cot(\te_0')}    \lb{7.56} \\
& \hspace*{1.1cm} = C_{2,1}(z),     \lb{7.57} \\
& C_{2,2}(z) = \f{\sin^2(\te_R - \te_R')}{\sin^2(\te_R')}
\f{1}{\check u_-'(z,R) - \cot(\te_R')}.     \lb{7.58}
\end{align}
To arrrive at \eqref{7.54}--\eqref{7.58} one repeatedly uses the identity
\begin{equation}
\cot(x) - \cot(y) = \f{\sin(y-x)}{\sin(y)\sin(x)},    \lb{7.59}
\end{equation}
the following expressions for the Wronskian $\check W$,
\begin{align}
\begin{split}
\check W(z) &= \check u_+(z,R) [\check u_-'(z,R) - \cot(\te_R')]   \\
&= - \check u_-(z,0) [\check u_+'(z,0) + \cot(\te_0')],     \lb{7.60}
\end{split}
\end{align}
and
\begin{align}
& ((H' -z I)^{1/2} \check u_+ (z,\cdot),
(H' -z I)^{1/2} \check u_+ (z,\cdot))_{L^2((0,R); dx)}    \no \\
& \quad = - [\check u_+' (z,0) + \cot(\te_0')],     \lb{7.61} \\
& ((H' -z I)^{1/2} \check u_+ (z,\cdot),
(H' -z I)^{1/2} \check u_- (z,\cdot))_{L^2((0,R); dx)}    \no \\
& \quad = - \check u_-(z,0) [\check u_+' (z,0) + \cot(\te_0')]
= \check u_+(z,R) [\check u_-' (z,R) - \cot(\te_R')],     \lb{7.62} \\
& \quad = ((H' -z I)^{1/2} \check u_- (z,\cdot),
(H' -z I)^{1/2} \check u_+ (z,\cdot))_{L^2((0,R); dx)},    \no \\
& ((H' -z I)^{1/2} \check u_- (z,\cdot),
(H' -z I)^{1/2} \check u_- (z,\cdot))_{L^2((0,R); dx)}    \no \\
& \quad = [\check u_-' (z,R) - \cot(\te_R')].    \lb{7.63}
\end{align}
Relations \eqref{7.61}--\eqref{7.63} are a consequence of one integration by
parts in \eqref{2.2f}.

Finally, we compute $\Lambda (z) S$, starting with \eqref{2.20a} and \eqref{7.42}:
\begin{align}
\Lambda (z) S &= \big(K_{j,k}(z)\big)_{j,k=1,2},    \lb{7.64} \\
K_{1,1}(z) &= \f{\sin(\te_0-\te_0') \sin(\te_0)}{\sin(\te_0')}
\f{\check u_+'(z,0) + \cot(\te_0)}{\check u_+'(z,0) + \cot(\te_0')}   \no \\
&= C_{1,1}(z) + \f{\sin(\te_0-\te_0') \sin(\te_0)}{\sin(\te_0')},    \lb{7.65} \\
K_{1,2}(z) &= - \f{\sin(\te_R-\te_R') \sin(\te_0)}{\sin(\te_R')}
\f{\check u_-(z,0) + \cot(\te_0) \check u_-(z,0)}{\check u_-'(z,R) - \cot(\te_R')}  \no \\
& = \f{\sin(\te_0-\te_0') \sin(\te_R-\te_R')}{\sin(\te_0') \sin(\te_R')}
\f{\check u_-(z,0)}{\check u_-'(z,R) - \cot(\te_R')}   \no \\
& =C_{1,2}(z) = C_{2,1}(z)   \lb{7.66} \\
& = - \f{\sin(\te_0 - \te_0') \sin(\te_R - \te_R')}{\sin(\te_0') \sin(\te_R')}
\f{\check u_+(z,R)}{\check u_+'(z,0) + \cot(\te_0')}   \no \\
& = K_{2,1}(z),     \lb{7.67} \\
K_{2,2}(z) &= \f{\sin(\te_R-\te_R') \sin(\te_R)}{\sin(\te_R')}
\f{\check u_-'(z,R) - \cot(\te_R)}{\check u_-'(z,R) + \cot(\te_R')}   \no \\
&= C_{2,2}(z) + \f{\sin(\te_R-\te_R') \sin(\te_R)}{\sin(\te_R')}.    \lb{7.68}
\end{align}
In particular,
\begin{equation}
\Lambda (z) S = B(z)^* B(z) + \begin{pmatrix}
\f{\sin(\te_0-\te_0') \sin(\te_0)}{\sin(\te_0')} & 0 \\ 0
& \f{\sin(\te_R-\te_R') \sin(\te_R)}{\sin(\te_R')} \end{pmatrix}.    \lb{7.69}
\end{equation}
An insertion of \eqref{7.69} into \eqref{7.48} finally yields
\begin{align}
& {\det}_{L^2((0,R); dx)}\Big(\ol{(H' - z I)^{1/2} (H - z I)^{-1} (H' - z I)^{1/2}}\Big)
\no \\
& \quad = {\det}_{\bbC^2}\Big(I_2 - S^{-1} \Lates B(z)^* B(z)\Big),    \no \\
& \quad = {\det}_{\bbC^2}\left(I_2 - [\Lambda(z) S]^{-1}\left[\Lambda (z) S
- \begin{pmatrix}
\f{\sin(\te_0-\te_0') \sin(\te_0)}{\sin(\te_0')} & 0 \\ 0
& \f{\sin(\te_R-\te_R') \sin(\te_R)}{\sin(\te_R')} \end{pmatrix}\right]\right)   \no \\
& \quad = {\det}_{\bbC^2}\left([\Lambda(z) S]^{-1} \begin{pmatrix}
\f{\sin(\te_0-\te_0') \sin(\te_0)}{\sin(\te_0')} & 0 \\ 0
& \f{\sin(\te_R-\te_R') \sin(\te_R)}{\sin(\te_R')} \end{pmatrix}\right)   \no \\
& \quad = {\det}_{\bbC^2}\Big(\Lates (z)\Big)
 {\det}_{\bbC^2}\big(S^{-1}\big)
 \f{\sin(\te_0-\te_0') \sin(\te_R-\te_R') \sin(\te_0) \sin(\te_R)}{\sin(\te_0') \sin(\te_R')}
 \no \\
& \quad =  \f{\sin(\te_0) \sin(\te_R)}{\sin(\te_0') \sin(\te_R')} \,
{\det}_{\bbC^2}\Big(\Lates (z)\Big).    \lb{7.70}
\end{align}
\end{proof}

Since the Fredholm determinant on the left-hand side of \eqref{7.41} vanishes for
$\te_0 = 0$ and/or $\te_R = 0$, we now briefly consider the nullspace of the operators involved:.

\begin{lemma} \lb{l8.2}
Assume that $\te_0, \te_R \in [0, 2 \pi)$, $\te_0', \te_R' \in (0, 2 \pi)\backslash\{\pi\}$,
$z \in \bbC\backslash[e_0,\infty)$,
and suppose that $V$ satisfies \eqref{7.24}. Let $\Hte$ and $H_{\theta_0',\theta_R'}$
be defined as in \eqref{2.2a}. Then recalling the factorization,
\begin{align}
& \ol{(H_{\theta_0',\theta_R'} - z I)^{1/2}
(\Hte - z I)^{-1} (H_{\theta_0',\theta_R'} - z I)^{1/2}}    \lb{7.71} \\
& \quad =
(H_{\theta_0',\theta_R'} - z I)^{1/2}
(\Hte - z I)^{-1/2} \big[(H_{\theta_0',\theta_R'} - \ol z I)^{1/2}
(\Hte - \ol z I)^{-1/2}\big]^*    \no
\end{align}
one obtains
\begin{align}
& \ker\big(\big[(H_{\theta_0',\theta_R'} - \ol z I)^{1/2}
(\Hte - \ol z I)^{-1/2}\big]^*\big)   \no \\
& \quad = \big\{f \in L^2((0,R); dx) \, \big| \,
f = (H_{\theta_0',\theta_R'} - z I)^{1/2} \psi (z,\cdot);    \no \\
& \qquad \quad  \psi (z,\cdot), \psi'(z,\cdot) \in AC([0,R]); \,
-\psi''(z,\cdot) + (V(\cdot) - z) \psi(z,\cdot) = 0; \no \\
& \qquad \quad
\psi'(z,R) - \cot(\te_R') \psi(z,R) =0 \text{ if } \te_0 = 0, \, \te_R \neq 0; \lb{7.72} \\
& \qquad \quad
\psi'(z,0) + \cot(\te_0') \psi(z,0) =0 \text{ if } \te_0 \neq 0, \, \te_R = 0   \no \\
& \qquad \quad
\text{no boundary conditions on $\psi(z,\cdot)$ if $\te_0 = \te_R = 0$}\big\}, \no
\end{align}
in particular,
\begin{align}
\begin{split}
& \dim\big(\ker\big(\big[(H_{\theta_0',\theta_R'} - \ol z I)^{1/2}
(\Hte - \ol z I)^{-1/2}\big]^*\big)\big)     \\
& \quad = \begin{cases}
2, & \te_0 = \te_R =0, \\
1, & \te_0 =0, \, \te_R \neq 0 \, \text{ or } \,  \te_0 \neq 0, \, \te_R = 0.
\end{cases}
\end{split}    \lb{7.73}
\end{align}
\end{lemma}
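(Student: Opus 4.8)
The plan is to identify the kernel as an orthogonal complement and then turn the resulting orthogonality condition into a boundary value problem for the homogeneous Schr\"odinger equation. Write $H = \Hte$ and $H' = H_{\theta_0',\theta_R'}$ for brevity, and set $T = (H' - \ol z I)^{1/2}(H - \ol z I)^{-1/2}$, which lies in $\cB\big(L^2((0,R);dx)\big)$ by Lemma \ref{l7.6}; the operator whose kernel is sought is exactly $T^*$. Since $T$ is bounded, $\ker(T^*) = \ran(T)^\perp$. Because $z\in\bbC\backslash[e_0,\infty)$ forces $\ol z \in \rho(H)$ and $(H - \ol z I)^{-1/2}$ maps $L^2$ bijectively onto $\dom(|H|^{1/2}) = \dom(Q_{\Hte})$, and since $\dom(|H|^{1/2}) \subseteq H^1((0,R)) = \dom(|H'|^{1/2})$, one gets $\ran(T) = (H' - \ol z I)^{1/2}\big(\dom(Q_{\Hte})\big)$. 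Hence $f \in \ker(T^*)$ if and only if $\big(f, (H' - \ol z I)^{1/2} h\big)_{L^2((0,R); dx)} = 0$ for every $h \in \dom(Q_{\Hte})$.

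Next I would parametrize the test vector. As $z \in \rho(H')$, the map $(H' - z I)^{1/2}$ is a bijection from $\dom(Q_{H'}) = H^1((0,R))$ onto $L^2((0,R);dx)$, so every $f$ can be written uniquely as $f = (H' - z I)^{1/2}\psi$ with $\psi \in H^1((0,R))$. Using the spectral representation of $H'$ together with the branch identity $\overline{(\lambda - z)^{1/2}} = (\lambda - \ol z)^{1/2}$, valid since $\sigma(H') \subseteq [e_0,\infty)$ avoids the cut, one computes for $\psi, h \in \dom(Q_{H'})$ that $\big((H' - z I)^{1/2}\psi, (H' - \ol z I)^{1/2} h\big)_{L^2((0,R); dx)} = Q_{H'}(\psi, h) - \ol z\, (\psi, h)_{L^2((0,R); dx)}$, an asymmetric form of the second representation theorem. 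Thus $f \in \ker(T^*)$ iff $Q_{H'}(\psi, h) - \ol z\,(\psi, h)_{L^2((0,R); dx)} = 0$ for all $h \in \dom(Q_{\Hte})$. I regard this identity as the technical heart of the argument, because its two factors carry $z$ and $\ol z$ and $f$ generally fails to lie in $\dom((H'-zI)^{1/2})$, so a naive adjoint manipulation breaks down and the spectral computation is needed to bypass the domain mismatch.

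I would then extract the boundary value problem. Testing the condition against $h \in C_c^\infty((0,R)) \subseteq \dom(Q_{\Hte})$ annihilates the boundary terms in $Q_{H'}$ (see \eqref{2.2f}) and yields the weak equation $-\overline{\psi}'' + (V - \ol z)\overline{\psi} = 0$; conjugating and using that $V$ is real gives $-\psi'' + (V - z)\psi = 0$, whence $\psi,\psi' \in AC([0,R])$. Feeding this back and integrating by parts once, the interior integrals cancel and the condition collapses to $\big[\overline{\psi'(R)} - \cot(\te_R')\overline{\psi(R)}\big]h(R) - \big[\overline{\psi'(0)} + \cot(\te_0')\overline{\psi(0)}\big]h(0) = 0$ for all $h \in \dom(Q_{\Hte})$. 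Reading off from \eqref{2.2f}--\eqref{2.2i} which of $h(0),h(R)$ are free, and conjugating (the numbers $\cot(\te_0'),\cot(\te_R')$ being real), produces precisely the boundary conditions listed in \eqref{7.72}: both $h(0),h(R)$ free when $\te_0,\te_R\neq 0$, only $h(R)$ free when $\te_0=0\neq\te_R$, only $h(0)$ free when $\te_R=0\neq\te_0$, and neither free when $\te_0=\te_R=0$.

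Finally, the dimension count \eqref{7.73} is immediate, since $-\psi'' + (V-z)\psi = 0$ has a two-dimensional solution space: no boundary condition gives $\dim = 2$ (the case $\te_0=\te_R=0$), a single boundary condition gives $\dim = 1$ (exactly one of $\te_0,\te_R$ zero), and two boundary conditions force $\psi$ to be an $H'$-eigenfunction at $z\in\rho(H')$, hence $\psi = 0$ (both nonzero, consistent with the nonvanishing determinant in that regime). As in the proof of Theorem \ref{t8.1}, the reduction to $\te_0,\te_R\in[0,\pi)$ and $\te_0',\te_R'\in(0,\pi)$ via \eqref{2.2A}, \eqref{2.2B} lets me treat a Dirichlet condition at an endpoint as the value $\te=0$, so the three displayed regimes exhaust all cases after reduction.
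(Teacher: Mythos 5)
Your proposal is correct and follows essentially the same route as the paper's proof: both identify the kernel with the orthogonal complement of $\ran\big((H_{\theta_0',\theta_R'} - \ol z I)^{1/2}(\Hte - \ol z I)^{-1/2}\big) = (H_{\theta_0',\theta_R'} - \ol z I)^{1/2}\big(\dom(Q_{\Hte})\big)$, set $f = (H_{\theta_0',\theta_R'} - z I)^{1/2}\psi(z,\cdot)$, and use the form \eqref{2.2f} together with one integration by parts to extract the distributional Schr\"odinger equation and the surviving boundary terms, reading off from $\dom(Q_{\Hte})$ which of $h(0), h(R)$ are free. Your uniform case treatment (the paper runs the cases $\te_0=\te_R=0$ and $\te_0=0$, $\te_R\neq 0$ separately), your explicit spectral-theorem justification of the pairing identity, and your remark on the trivial kernel when $\te_0,\te_R\neq 0$ are only presentational refinements of the same argument.
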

\begin{proof}
Let $z \in \bbC\backslash[e_0,\infty)$.
To determine the precise characterization of the nullspace in \eqref{7.72} one can argue as follows: Suppose first that $\te_0 = \te_R = 0$ and that
\begin{align}
\begin{split}
& f \bot \, \ran\big((H_{\theta_0',\theta_R'} - \ol z I)^{1/2}(H_{0,0} - \ol z I)^{-1/2}\big),  \\
& g \in \ran\big((H_{\theta_0',\theta_R'} - \ol z I)^{1/2}(H_{0,0} - \ol z I)^{-1/2}\big),
\lb{7.74}
\end{split}
\end{align}
implying
\begin{equation}
g = (H_{\theta_0',\theta_R'} - \ol z I)^{1/2} h \, \text{ for some } \,
h \in H^1_0((0,R)),     \lb{7.75}
\end{equation}
and hence $h(0)=h(R)=0$.
Thus, introducing $\psi(z,\cdot) = (H_{\theta_0',\theta_R'} - z I)^{-1/2} f$, one obtains
using \eqref{2.2f} again,
\begin{align}
0 &= (g,f)_{L^2((0,R); dx)}   \no \\
&= \big((H_{\theta_0',\theta_R'} - \ol z I)^{1/2} h, (H_{\theta_0',\theta_R'} - z I)^{1/2}
(H_{\theta_0',\theta_R'} - z I)^{-1/2} f\big)_{L^2((0,R); dx)}   \no \\
&= \big((H_{\theta_0',\theta_R'} - \ol z I)^{1/2} h, (H_{\theta_0',\theta_R'} - z I)^{1/2}
\psi(z)\big)_{L^2((0,R); dx)}   \no \\
&= \int_0^R dx \, [\ol{h'(x)} \psi'(z,x) + (V(x)-z) \ol{h(x)} \psi(z,x)]   \no \\
& \quad - \cot(\te_0') \ol{h(0)} \psi(z,0) - \cot(\te_R') \ol{h(R)} \psi(z,R)   \no \\
&= \ol{h(x)} \psi'(z,x)\big|_0^R
+ \int_0^R dx \, \ol{h(x)} [\psi''(z,x) + (V(x)-z) \psi(z,x)]   \no \\
&= \int_0^R dx \, \ol{h(x)} [\psi''(z,x) + (V(x)-z) \psi(z,x)], \quad h \in H^1_0((0,R)).
\lb{7.76}
\end{align}
Hence one concludes that
\begin{equation}
\psi(z,\cdot), \, \psi'(z,\cdot) \in AC([0,R]),    \lb{7.77}
\end{equation}
and that
\begin{equation}
\psi''(z,\cdot) + (V(\cdot)-z) \psi(z,\cdot) = 0 \, \text{ in the sense of distributions.}
\lb{7.78}
\end{equation}
As $g \in \ran\big((H_{\theta_0',\theta_R'} - \ol z I)^{1/2}(H_{0,0} - \ol z I)^{-1/2}\big)$
was chosen arbitrarily, one concludes that any element $f$ in
$\ker\big(\big[(H_{\theta_0',\theta_R'} - \ol z I)^{1/2}
(H_{0,0} - \ol z I)^{-1/2}\big]^*\big)$ is of the form
\begin{equation}
f = (H_{\theta_0',\theta_R'} - z I)^{1/2} \psi(z,\cdot).    \lb{7.79}
\end{equation}
The fact that $\psi(z,\cdot)$ satisfies no boundary conditions then shows that the dimension of the nullspace in \eqref{7.72} is precisely two if $\te_0=\te_R=0$.

Next we consider the case $\te_0=0$, $\te_R \neq 0$ (the case
$\te_0 \neq 0$, $\te_R = 0$ being completely analogous): Again we assume
\begin{align}
\begin{split}
& f \bot \, \ran\big((H_{\theta_0',\theta_R'} - \ol z I)^{1/2}(H_{0,\te_R} - \ol z I)^{-1/2}\big),  \\
& g \in \ran\big((H_{\theta_0',\theta_R'} - \ol z I)^{1/2}(H_{0,\te_R} - \ol z I)^{-1/2}\big),
\lb{7.80}
\end{split}
\end{align}
implying
\begin{equation}
g = (H_{\theta_0',\theta_R'} - \ol z I)^{1/2} h \, \text{ for some } \,
h \in H^1 ((0,R)) \, \text{ with $h(0)=0$.}     \lb{7.81}
\end{equation}
Introducing once more $\psi(z,\cdot) = (H_{\theta_0',\theta_R'} - z I)^{-1/2} f$, one obtains again via \eqref{2.2f} that
\begin{align}
0 &= (g,f)_{L^2((0,R); dx)}   \no \\
&= \big((H_{\theta_0',\theta_R'} - \ol z I)^{1/2} h, (H_{\theta_0',\theta_R'} - z I)^{1/2}
(H_{\theta_0',\theta_R'} - z I)^{-1/2} f\big)_{L^2((0,R); dx)}   \no \\
&= \big((H_{\theta_0',\theta_R'} - \ol z I)^{1/2} h, (H_{\theta_0',\theta_R'} - z I)^{1/2}
\psi(z)\big)_{L^2((0,R); dx)}   \no \\
&= \int_0^R dx \, [\ol{h'(x)} \psi'(z,x) + (V(x)-z) \ol{h(x)} \psi(z,x)]   \no \\
& \quad - \cot(\te_0') \ol{h(0)} \psi(z,0) - \cot(\te_R') \ol{h(R)} \psi(z,R)   \no \\
&= \ol{h(x)} \psi'(z,x)\big|_0^R
+ \int_0^R dx \, \ol{h(x)} [\psi''(z,x) + (V(x)-z) \psi(z,x)]   \no \\
& \quad - \cot(\te_R') \ol{h(R)} \psi(z,R)   \no \\
&= \ol{h(R)} [\psi'(z,R) - \cot(\te_R') \psi(z,R)]    \lb{7.82} \\
& \quad + \int_0^R dx \, \ol{h(x)} [\psi''(z,x) + (V(x)-z) \psi(z,x)],
\quad h \in H^1((0,R)), \; h(0)=0.   \no
\end{align}
Choosing temporarily $h \in H^1_0((0,R))$, one obtains
\begin{equation}
\int_0^R dx \, \ol{h(x)} [\psi''(z,x) + (V(x)-z) \psi(z,x)],
\quad h \in H^1_0((0,R)),    \lb{7.83}
\end{equation}
and hence again concludes that
\begin{equation}
\psi(z,\cdot), \, \psi'(z,\cdot) \in AC([0,R]),    \lb{7.84}
\end{equation}
and that
\begin{equation}
\psi''(z,\cdot) + (V(\cdot)-z) \psi(z,\cdot) = 0 \, \text{ in the sense of distributions.}
\lb{7.85}
\end{equation}
Taking into account \eqref{7.85} in \eqref{7.82} then yields
\begin{equation}
0= \ol{h(R)} [\psi'(z,R) - \cot(\te_R') \psi(z,R)],
\quad h \in H^1((0,R)), \; h(0)=0,     \lb{7.86}
\end{equation}
implying
\begin{equation}
[\psi'(z,R) - \cot(\te_R') \psi(z,R)] = 0.   \lb{7.87}
\end{equation}
As before, this proves \eqref{7.72} in the case $\te_0=0$, $\te_R \neq 0$. The
boundary condition \eqref{7.87} then yields that the nullspace \eqref{7.72} is
one-dimensional in this case.
\end{proof}

\begin{remark} \lb{r8.3}
We emphasize the interesting fact that relation \eqref{7.41} represents yet another
reduction of an infinite-dimensional Fredholm determinant (more precisely, a
symmetrized perturbation determinant) to a finite-dimensional determinant. This is
analogous to the following well-known situations: \\
$(i)$ The Jost--Pais formula \cite{JP51} in the context of half-line Schr\"odinger
operators (relating the perturbation determinant of the
corresponding Birman--Schwinger kernel with the Jost function and hence a Wronski
determinant).  \\
$(ii)$ Schr\"odinger operators on the real line \cite{Ne80} (relating the perturbation determinant of the corresponding Birman--Schwinger kernel with the transmission coefficient and hence again a Wronski determinant). \\
$(iii)$ One-dimensional periodic Schr\"odinger operators \cite{Ma55} (relating the Floquet discriminant with an appropriate Fredholm determinant). \\
These cases, and much more general situations in connection with semi-separable
integral kernels (which typically apply to one-dimensional differential and difference
operators with matrix-valued coefficients) were studied in great deal in \cite{GM03}
(see also \cite{GW95} and the multi-dimensional discussion in \cite{GMZ07}).
\end{remark}

We conclude this section by pointing out that determinants (especially, 
$\zeta$-function regularized determinants) for various elliptic boundary value problems on compact intervals (including cases with regular singular coefficients) have received considerable attention and we refer, for 
instance, to Burghelea, Friedlander, and Kappeler \cite{BFK95}, 
Dreyfus and Dym \cite{DD78}, Forman \cite{Fo92}, 
Kirsten, Loya, and Park \cite{KLP08}, Kirsten and McKane \cite{KM04}, 
Lesch \cite{Le98}, Lesch and Tolksdorf \cite{LT98}, 
Lesch and Vertman \cite{LV10}, and Levit and Smilansky \cite{LS77} in this 
context.

\section{Trace Formulas and the Spectral Shift Function}  \label{s4}

In this section we derive the trace formula for the resolvent difference of
$\Hte$ and $H_{\theta_0',\theta_R'}$ in terms of the spectral shift function
$\xi(\cdot; H_{\theta_0',\theta_R'}, \Hte)$ and establish the connection between
$\Lates (\cdot)$ and $\xi(\cdot; H_{\theta_0',\theta_R'}, \Hte)$.

To prepare the ground for the basic trace formula we now state the following
fact (which does not require $\Hte$ and $H_{\theta_0',\theta_R'}$ to be self-adjoint):

\begin{lemma} \lb{l4.1}
Assume that $\te_0, \te_R, \te_0', \te_R' \in S_{2 \pi}$, and let $\Hte$ and
$H_{\theta_0',\theta_R'}$ be defined as in \eqref{2.2a}. Then, with
$\Lates (\cdot) S_{\te_0'-\te_0, \te_R'-\te_R}$ given by \eqref{3.33aa},
\begin{align}
& \gamma_{\te_0', \te_R'} (\Hte - z I)^{-1} \big[\gamma_{\ol{\te_0'},\ol{\te_R'}}
(\Hte^* - {\ol z} I)^{-1}\big]^*
= \f{d}{dz} \Big(\Lates (z) S_{\te_0'-\te_0, \te_R'-\te_R}\Big),  \no \\
& \hspace*{9cm}   z\in\rho(\Hte).    \lb{4.1}
\end{align}
\end{lemma}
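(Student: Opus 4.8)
The plan is to deduce \eqref{4.1} by differentiating in $z$ the closed-form representation of the boundary data map supplied by Theorem \ref{t3.5}. Throughout, abbreviate $\gamma' = \gamma_{\te_0',\te_R'}$, $\wti\gamma = \gamma_{\ol{\te_0'},\ol{\te_R'}}$, $S = S_{\te_0'-\te_0,\te_R'-\te_R}$, and set $G(z) = \big[\wti\gamma\,((\Hte)^* - \ol z I)^{-1}\big]^* \in \cB\big(\bbC^2, L^2((0,R); dx)\big)$. In this notation Theorem \ref{t3.5} reads $\Lates(z)\,S = \gamma'\,G(z)$ for $z \in \rho(\Hte)$. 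Since $S$ is a constant matrix, it suffices to compute the $z$-derivative of $\gamma'\,G(z)$ and to recognize the outcome as the left-hand side of \eqref{4.1}.

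The heart of the argument is to extract the $z$-dependence of $G(z)$ from the first resolvent identity for $(\Hte)^*$. Fixing $z_0 \in \rho(\Hte)$ and $z$ close to it, I would begin from
\begin{align*}
& \wti\gamma\big[((\Hte)^* - \ol z I)^{-1} - ((\Hte)^* - \ol{z_0} I)^{-1}\big] \\
& \quad = (\ol z - \ol{z_0})\, \wti\gamma\,((\Hte)^* - \ol z I)^{-1}\,((\Hte)^* - \ol{z_0} I)^{-1},
\end{align*}
which is meaningful because the range of each resolvent lies in the domain of the boundary trace $\wti\gamma$. Taking Hilbert-space adjoints of both sides, using $\big[((\Hte)^* - \ol{z_0} I)^{-1}\big]^* = (\Hte - z_0 I)^{-1}$, the reversal of the order of factors under $(\cdot)^*$, and $\overline{(\ol z - \ol{z_0})} = z - z_0$, gives the clean identity
\begin{equation*}
G(z) - G(z_0) = (z - z_0)\,(\Hte - z_0 I)^{-1}\,G(z).
\end{equation*}
Dividing by $z - z_0$, applying $\gamma'$ on the left, and invoking $\Lates(\zeta)\,S = \gamma'\,G(\zeta)$ then yields
\begin{equation*}
\frac{\Lates(z)\,S - \Lates(z_0)\,S}{z - z_0} = \gamma'\,(\Hte - z_0 I)^{-1}\,G(z).
\end{equation*}

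Finally I would let $z \to z_0$. The left-hand side converges to $\frac{d}{dz}\big(\Lates(z)\,S\big)$ evaluated at $z_0$, the right-hand side of \eqref{4.1}. For the right-hand side I would use two ingredients already available: the map $G(\cdot)$ is analytic, in particular continuous, in the $\cB\big(\bbC^2, L^2((0,R); dx)\big)$-norm, so $G(z) \to G(z_0)$; and --- this is the decisive point --- $\gamma'(\Hte - z_0 I)^{-1}$ is a genuinely \emph{bounded} operator from $L^2((0,R); dx)$ into $\bbC^2$, exhibited explicitly in Lemma \ref{l6.1}. Hence $\gamma'(\Hte - z_0 I)^{-1}\,G(z) \to \gamma'(\Hte - z_0 I)^{-1}\,G(z_0)$, and renaming $z_0$ as $z$ produces exactly \eqref{4.1}. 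I expect this last limit to be the only genuinely delicate step: because $\gamma'$ is unbounded on $L^2((0,R); dx)$, it cannot be interchanged with an $L^2$-limit, and the algebra above is arranged precisely so that $\gamma'$ never acts in isolation but always through the bounded operator $\gamma'(\Hte - z_0 I)^{-1}$ of Lemma \ref{l6.1}. Verifying this boundedness together with the $\cB\big(\bbC^2, L^2((0,R); dx)\big)$-analyticity of $G(\cdot)$ constitutes the substantive input beyond Theorem \ref{t3.5}.
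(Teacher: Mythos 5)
Your proof is correct and follows essentially the same route as the paper: starting from the representation $\Lates(z)\,S_{\te_0'-\te_0,\te_R'-\te_R} = \gamma_{\te_0',\te_R'}\big[\gamma_{\ol{\te_0'},\ol{\te_R'}}((\Hte)^*-\ol{z}I)^{-1}\big]^*$ of Theorem \ref{t3.5} and using the resolvent equation for $(\Hte)^*$ together with an adjoint manipulation to identify the $z$-derivative with $\gamma_{\te_0',\te_R'}(\Hte-zI)^{-1}\big[\gamma_{\ol{\te_0'},\ol{\te_R'}}((\Hte)^*-\ol{z}I)^{-1}\big]^*$, which is precisely the content of \eqref{4.2}. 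Your difference-quotient formulation, with the limit justified by the norm continuity of $G(\cdot)$ and the boundedness of $\gamma_{\te_0',\te_R'}(\Hte-z_0I)^{-1}$ from Lemma \ref{l6.1}, simply makes explicit what the paper's terse ``one verifies'' leaves implicit.
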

\begin{proof}
Employing the resolvent equation for $\Hte^*$, one verifies that
\begin{align}
& \f{d}{dz} \gamma_{\te_0', \te_R'} \big[\gamma_{\ol{\te_0'},\ol{\te_R'}}
(\Hte^* - {\ol z} I)^{-1}\big]^* = \gamma_{\te_0', \te_R'}
\big[\gamma_{\ol{\te_0'},\ol{\te_R'}} (\Hte^* - {\ol z} I)^{-2}\big]^*   \no \\
& \quad = \gamma_{\te_0', \te_R'} (\Hte - z I)^{-1}
\big[\gamma_{\ol{\te_0'},\ol{\te_R'}} (\Hte^* - {\ol z} I)^{-1}\big]^*.   \lb{4.2}
\end{align}
Together with \eqref{3.33aa} this proves \eqref{4.1}.
\end{proof}

Combining Theorems \ref{t6.3} and \ref{t7.8} with Lemma \ref{l4.1} then yields the following result:

\begin{theorem} \lb{t4.2}
Assume that $\te_0, \te_R, \te_0', \te_R' \in [0, 2 \pi)$,
and suppose that $V$ satisfies \eqref{7.24}. Let $\Hte$ and $H_{\theta_0',\theta_R'}$
be defined as in \eqref{2.2a}. Then,
\begin{align}
&  \tr_{L^2((0,R); dx)}\big((H_{\theta_0',\theta_R'} -z I)^{-1}
- (\Hte -z I)^{-1}\big)    \no \\
& \quad = - \tr_{\bbC^2}\bigg(\Big[\Lates (z)\Big]^{-1}
\f{d}{dz} \Big[\Lates (z)\Big]\bigg)    \no \\
& \quad = - \f{d}{dz} \ln\Big({\det}_{\bbC^2}\Big(\Lates (z)\Big)\Big), \quad
z \in \bbC\backslash[e_0,\infty).   \lb{4.3}
\end{align}
If, in addition, $\te_0', \te_R' \in (0, 2 \pi)\backslash\{\pi\}$, then
\begin{align}
&  \tr_{L^2((0,R); dx)}\big((H_{\theta_0',\theta_R'} -z I)^{-1}
- (\Hte -z I)^{-1}\big)   \no \\
& = - \f{d}{dz} \ln\Big({\det}_{L^2((0,R); dx)}\Big(\ol{(H_{\theta_0',\theta_R'} - z I)^{1/2}
(\Hte - z I)^{-1} (H_{\theta_0',\theta_R'} - z I)^{1/2}}\Big)\Big),    \no \\
& \hspace*{9cm}  z \in \bbC\backslash[e_0,\infty).     \lb{4.4}
\end{align}
\end{theorem}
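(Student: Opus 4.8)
The plan is to treat the two displayed chains \eqref{4.3} and \eqref{4.4} separately, reducing each to results already in hand. For \eqref{4.3} the last equality is purely finite-dimensional: it is the scalar Jacobi identity ${\tr}_{\bbC^2}\big([\Lates(z)]^{-1}\f{d}{dz}[\Lates(z)]\big)=\f{d}{dz}\ln{\det}_{\bbC^2}(\Lates(z))$, i.e.\ the $2\times2$ instance of Theorem \ref{t7.1} applied to $T(z)=\Lates(z)-I_2$. Before invoking it I would record that on $\bbC\backslash[e_0,\infty)$ both $\Hte$ and $H_{\te_0',\te_R'}$ are self-adjoint with spectra contained in $[e_0,\infty)$, so that $z\in\rho(\Hte)\cap\rho(H_{\te_0',\te_R'})$; hence by \eqref{2.43} the map $\Lates(\cdot)$ is analytic and, by \eqref{2.7c}, boundedly invertible there, which is exactly what Jacobi's identity requires.

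The substantive step is the first equality in \eqref{4.3}, where I would combine the Krein-type formula of Theorem \ref{t6.3} with Lemma \ref{l4.1}. In the generic case $\te_0\neq\te_0'$, $\te_R\neq\te_R'$ I insert \eqref{6.21}, writing the resolvent difference as $-\,\Phi\,S^{-1}[\Lates(z)]^{-1}\,\Psi$, where $\Phi=\big[\gamma_{\ol{\te_0'},\ol{\te_R'}}((\Hte)^*-\ol z I)^{-1}\big]^*\in\cB(\bbC^2,L^2((0,R);dx))$, $\Psi=\gamma_{\te_0',\te_R'}(\Hte-z I)^{-1}\in\cB(L^2((0,R);dx),\bbC^2)$, and $S=S_{\te_0'-\te_0,\te_R'-\te_R}$. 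Since this operator is of finite rank, cyclicity of the trace lets me pass from $L^2((0,R);dx)$ to $\bbC^2$, regrouping $\Psi\Phi$ into a $2\times2$ matrix; by Lemma \ref{l4.1} one has $\Psi\Phi=\f{d}{dz}(\Lates(z)S)=\big[\f{d}{dz}\Lates(z)\big]S$ because $S$ is $z$-independent. A second application of cyclicity in $\bbC^2$ then cancels the factors $S$ and $S^{-1}$, producing precisely $-{\tr}_{\bbC^2}\big([\Lates(z)]^{-1}\f{d}{dz}[\Lates(z)]\big)$.

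For \eqref{4.4}, under the additional hypothesis $\te_0',\te_R'\in(0,2\pi)\backslash\{\pi\}$ (so that $\sin(\te_0')\sin(\te_R')\neq0$), I would quote Theorem \ref{t8.1}: the symmetrized Fredholm determinant on $L^2((0,R);dx)$ equals the nonzero constant $\f{\sin(\te_0)\sin(\te_R)}{\sin(\te_0')\sin(\te_R')}$ times ${\det}_{\bbC^2}(\Lates(z))$. Applying $-\f{d}{dz}\ln(\dott)$ annihilates the constant, so the logarithmic derivatives of the two determinants coincide; feeding this into \eqref{4.3} yields \eqref{4.4}. (Equivalently, \eqref{4.4} is the already-derived \eqref{7.30}, the direct specialization of the abstract Theorem \ref{t7.8} to $A=H_{\te_0',\te_R'}$, $A_0=\Hte$.)

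The main obstacle I anticipate is not a single estimate but the bookkeeping around degenerate parameter values: \eqref{6.21} holds only when $\te_0\neq\te_0'$ and $\te_R\neq\te_R'$. When exactly one pair coincides I would instead use the reduced Krein formulas \eqref{6.22} or \eqref{6.23} together with the vanishing off-diagonal entries \eqref{2.30} (so that $\Lates$ is block-diagonal, cf.\ Remark \ref{r4.7}), running the identical trace manipulation with $P_1$ or $P_2$ in place of $S^{-1}$; when both pairs coincide $H_{\te_0',\te_R'}=\Hte$, $\Lates=I_2$, and both sides of \eqref{4.3} vanish. Care is also needed to justify the interchange of ${\tr}_{L^2((0,R);dx)}$ and ${\tr}_{\bbC^2}$, which rests on the finite-rank factorizations established around \eqref{7.28a}--\eqref{7.28d}.
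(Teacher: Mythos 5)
Your proposal is correct and takes essentially the same route as the paper's proof: the first equality in \eqref{4.3} is obtained by taking the trace in the Krein formula \eqref{6.21} (with \eqref{6.22}, \eqref{6.23}, and \eqref{2.7a} covering the degenerate cases), applying Lemma \ref{l4.1} and cyclicity of the trace while using that $S_{\te_0'-\te_0,\te_R'-\te_R}$ is invertible and $z$-independent, and \eqref{4.4} follows by combining \eqref{4.3} with Theorem \ref{t8.1} (equivalently, via \eqref{7.30} from Theorem \ref{t7.8}). Your write-up is merely more explicit than the paper's terse ``follow similarly,'' spelling out the $P_1$/$P_2$ reductions and the finite-rank justification for passing between ${\tr}_{L^2((0,R);dx)}$ and ${\tr}_{\bbC^2}$.
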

\begin{proof}
The second equality in \eqref{4.3} is obvious. Next, we temporarily suppose
that $\te_0 \neq \te_0'$ and
$\te_R \neq \te_R'$. Then  the first equality in \eqref{4.3} follows upon taking
the trace in \eqref{6.21}, using cyclicity of the trace, and applying \eqref{4.1}
(keeping in mind that $S_{\te_0'-\te_0, \te_R'-\te_R}$ is invertible and
$z$-independent). The remaining cases where
$\te_0 = \te_0'$ or $\te_R = \te_R'$ follow similarly (the case where
$\te_0 = \te_0'$ and $\te_R = \te_R'$ instantly follows from \eqref{2.7a}).

Relation \eqref{4.4} follows from \eqref{7.21}, \eqref{7.41}, and \eqref{4.3}.
\end{proof}

Next, we note that the rank-two behavior of the difference of the
resolvents of $\Hte$ and $H_{\theta_0',\theta_R'}$ displayed in Theorem \ref{t6.3}
permits one to define the spectral shift function
$\xi(\, \cdot \,; H_{\theta_0',\theta_R'}, \Hte)$ associated with the pair
$(H_{\theta_0',\theta_R'}, \Hte)$. Using the standard normalization in the context of
self-adjoint operators bounded from below,
\begin{equation}
\xi(\, \cdot \,; H_{\theta_0',\theta_R'}, \Hte) = 0, \quad
\lambda < e_0 =\inf\big(\sigma(\Hte) \cup \sigma( H_{\theta_0',\theta_R'})\big),
\lb{4.5}
\end{equation}
Krein's trace formula (see, e.g., \cite[Ch.\ 8]{Ya92}, \cite{Ya07}) reads
\begin{align}
\begin{split}
& \tr_{L^2((0,R); dx)}\big((H_{\theta_0',\theta_R'} -z I)^{-1}
- (\Hte -z I)^{-1}\big)   \\
& \quad = - \int_{[e_0,\infty)}
\f{\xi(\lambda; H_{\theta_0',\theta_R'}, \Hte) \, d\lambda}{(\lambda - z)^2},
\quad z\in\rho(\Hte)\cap \rho(H_{\theta_0',\theta_R'})\big),    \lb{4.6}
\end{split}
\end{align}
where $\xi(\cdot \, ; H_{\theta_0',\theta_R'}, \Hte)$ satisfies
\begin{equation}
\xi(\cdot \, ; H_{\theta_0',\theta_R'}, \Hte)
\in L^1\big(\bbR; (\lambda^2 + 1)^{-1} d\lambda\big).
\end{equation}
Since the spectra of $\Hte$ and $H_{\theta_0',\theta_R'}$ are purely discrete,
$\xi(\, \cdot \,; H_{\theta_0',\theta_R'}, \Hte)$ is a piecewise constant function
on $\bbR$ with jumps at the eigenvalues of $\Hte$ and $H_{\theta_0',\theta_R'}$,
the jumps corresponding to the multiplicity of the eigenvalue in question.

Moreover, $\xi(\cdot \, ; H_{\theta_0',\theta_R'}, \Hte)$ permits a representation
in terms of nontangential boundary
values to the real axis of ${\det}_{\bbC^2}\Big(\Lates (\cdot)\Big)$
(resp., of the symmetrized perturbation determinant \eqref{7.29}), to be
described in Theorem \ref{t4.3}.

Since by \eqref{2.2A} and \eqref{2.2B} it suffices to consider
$\te_0, \te_R \in [0,\pi)$ when considering the operator $\Hte$, we will restrict
the boundary condition parameters accordingly next:

\begin{theorem} \lb{t4.3}
Assume that $\te_0, \te_R \in [0, \pi)$, $\te_0', \te_R' \in (0, \pi)$,
and suppose that $V$ satisfies \eqref{7.24}. Let $\Hte$ and $H_{\theta_0',\theta_R'}$
be defined as in \eqref{2.2a}. Then,
\begin{align}
\begin{split}
\xi(\lambda; H_{\theta_0', \theta_R'}, \Hte) =
\pi^{-1} \lim_{\varepsilon \downarrow 0} \Im\Big(
\ln\Big(\eta(\te_0,\te_R) \,
{\det}_{\bbC^2}\Big(\Lates (\lambda + i \varepsilon)\Big)\Big)\Big)&    \lb{4.7} \\
\text{ for a.e.\ $\lambda \in\bbR$,}&
\end{split}
\end{align}
where
\begin{equation}
\eta(\te_0,\te_R) = \begin{cases}
\;\;\, 1, & \te_0, \te_R \in (0,\pi), \; \te_0=\te_R=0, \\
-1, & \te_0=0, \, \te_R \in (0,\pi), \; \te_0 \in (0,\pi), \, \te_R=0.
\end{cases}         \lb{4.8}
\end{equation}
\end{theorem}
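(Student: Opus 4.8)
The plan is to combine the trace formula already established in Theorem~\ref{t4.2} with Krein's trace formula~\eqref{4.6} and then to recover $\xi$ by a Stieltjes inversion of the resulting Cauchy transform. Writing $\xi(\la) = \xi(\la;H_{\te_0',\te_R'},\Hte)$ for brevity, I would first observe that the last equality in \eqref{4.3}, combined with \eqref{4.6}, gives
\begin{equation}
\f{d}{dz}\ln\Big({\det}_{\bbC^2}\Big(\Lates(z)\Big)\Big)
= \f{d}{dz}\int_{[e_0,\infty)}\f{\xi(\la)}{\la - z}\,d\la,
\quad z\in\bbC\backslash[e_0,\infty),
\end{equation}
since $\f{d}{dz}(\la-z)^{-1}=(\la-z)^{-2}$. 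By \eqref{2.43} the function ${\det}_{\bbC^2}(\Lates(\cdot))$ has its zeros and poles precisely at the (real) eigenvalues of $H_{\te_0',\te_R'}$ and $\Hte$, hence it is analytic and nonvanishing on the connected set $\bbC\backslash[e_0,\infty)$, so a single analytic branch of $\ln({\det}_{\bbC^2}(\Lates(\cdot)))$ exists there; likewise the Cauchy transform on the right is analytic on $\bbC\backslash[e_0,\infty)$. Integrating the displayed identity on this connected set therefore yields $\ln({\det}_{\bbC^2}(\Lates(z))) = \int_{[e_0,\infty)}\xi(\la)(\la-z)^{-1}\,d\la + C$ for some constant $C$.

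The heart of the matter is to fix the branch and the additive constant, and this is exactly the role of $\eta(\te_0,\te_R)$. For $\la<e_0$ the quantity ${\det}_{\bbC^2}(\Lates(\la)) = \Delta(\la,R,\te_0',\te_R')/\Delta(\la,R,\te_0,\te_R)$ is real and nonzero, since each characteristic determinant $\Delta(\cdot,R,\cdot,\cdot)$ is entire and vanishes only at the real eigenvalues of the corresponding operator, none of which lie below $e_0$. Consequently ${\det}_{\bbC^2}(\Lates(\cdot))$ has constant sign on $(-\infty,e_0)$, and I would read off that sign from the leading asymptotics of Lemma~\ref{l2.9}: by \eqref{2.46} (taken at $z=\la\to-\infty$, so that $|z|^{1/2}=|\la|^{1/2}>0$ and all the prefactors $\sin(\te_0'),\sin(\te_R')$ and $\sin(\te_0),\sin(\te_R)$ for interior angles are positive) the sign of ${\det}_{\bbC^2}(\Lates(\la))$ equals $(-1)$ raised to the number of Dirichlet endpoints among $\{\te_0,\te_R\}$, which is precisely $\eta(\te_0,\te_R)$ as defined in \eqref{4.8}. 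Hence $\eta(\te_0,\te_R)\,{\det}_{\bbC^2}(\Lates(\la))>0$ for all $\la<e_0$, and I may define $g(z)=\ln\big(\eta(\te_0,\te_R)\,{\det}_{\bbC^2}(\Lates(z))\big)$ on $\bbC\backslash[e_0,\infty)$ as the analytic continuation of the real logarithm of this positive quantity along the segment $(-\infty,e_0)$.

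Since $g'(z)=\f{d}{dz}\ln({\det}_{\bbC^2}(\Lates(z)))$ coincides with the derivative of the Cauchy transform, one has $g(z)=\int_{[e_0,\infty)}\xi(\la)(\la-z)^{-1}\,d\la + C'$ with $C'$ constant; evaluating at $z=\la<e_0$, where both $g(\la)$ and the integral are real, forces $C'\in\bbR$. Taking imaginary parts of the boundary values from the upper half-plane and invoking the Stieltjes--Plemelj formula $\Im\int_{[e_0,\infty)}\xi(\la')(\la'-\la-i0)^{-1}\,d\la' = \pi\,\xi(\la)$ for a.e.\ $\la$ (legitimate because $\xi(\cdot;H_{\te_0',\te_R'},\Hte)\in L^1(\bbR;(\la^2+1)^{-1}d\la)$) then gives $\pi^{-1}\Im\big(g(\la+i0)\big)=\xi(\la)$ for a.e.\ $\la$, which is exactly \eqref{4.7} with the normalization \eqref{4.5}. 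The main obstacle is the correct determination of the branch and the additive constant, that is, the verification that $\eta(\te_0,\te_R)\,{\det}_{\bbC^2}(\Lates)$ is positive below $e_0$; this rests entirely on the sign bookkeeping in the asymptotic expansion of Lemma~\ref{l2.9}, whereas the passage from the two trace formulas to the Cauchy representation and the final Stieltjes inversion are routine.
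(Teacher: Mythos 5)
Your overall route coincides with the paper's (combine the trace formula \eqref{4.3} with Krein's formula \eqref{4.6}, fix the branch of the logarithm below $e_0$ via the sign read off from Lemma \ref{l2.9}, then Stieltjes-invert), and your sign bookkeeping for $\eta(\te_0,\te_R)$ is correct and matches \eqref{4.8}. However, there is a genuine gap at the integration step: you pass from the derivative identity to the representation $\ln\big({\det}_{\bbC^2}\big(\Lates(z)\big)\big)=\int_{[e_0,\infty)}\xi(\la)(\la-z)^{-1}\,d\la+C$, i.e., you represent $g$ by the \emph{unrenormalized} Cauchy transform. That integral need not converge: all that is known is $\xi(\cdot;H_{\te_0',\te_R'},\Hte)\in L^1\big(\bbR;(\la^2+1)^{-1}d\la\big)$, and since the resolvent difference has rank at most two, $\xi$ is bounded but in general does \emph{not} decay at $+\infty$. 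Concretely, in the case $\te_0=\te_R=0$ your own appeal to Lemma \ref{l2.9} gives ${\det}_{\bbC^2}\big(\Lates(\la)\big)\sim\sin(\te_0')\sin(\te_R')\,|\la|$ as $\la\to-\infty$, so $g(\la)\to+\infty$ along $(-\infty,e_0)$; a convergent Cauchy transform plus a constant would instead tend to that constant as $\la\to-\infty$ (dominated convergence), a contradiction. Thus precisely in the cases $\te_0=0$ and/or $\te_R=0$ your representation is ill-defined, and with it the step ``evaluating at $z=\la<e_0$ forces $C'\in\bbR$'' and the subsequent Plemelj formula applied to a divergent integral.

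The repair is the paper's renormalization: integrate \eqref{4.9} along the real axis from a reference point $z_0$ with $z<z_0<e_0$, splitting $\xi=\xi_+-\xi_-$ and invoking Fubini--Tonelli, to obtain
\begin{align}
\ln\Big(\eta(\te_0,\te_R)\,{\det}_{\bbC^2}\Big(\Lates(z)\Big)\Big)
-\ln\Big(\eta(\te_0,\te_R)\,{\det}_{\bbC^2}\Big(\Lates(z_0)\Big)\Big)
=\int_{[e_0,\infty)}\xi(\la)\,d\la\,
\bigg(\f{1}{\la-z}-\f{1}{\la-z_0}\bigg),   \no
\end{align}
whose kernel is $O(\la^{-2})$ as $\la\to\infty$ and hence integrable against $\xi$; after analytic continuation in $z$ to $\bbC\backslash[e_0,\infty)$, the $z_0$-term is real by your (correct) positivity of $\eta(\te_0,\te_R)\,{\det}_{\bbC^2}\big(\Lates(\cdot)\big)$ on $(-\infty,e_0)$, so it contributes nothing to the imaginary boundary values, and the Stieltjes inversion applied separately to the measures $\xi_\pm\,d\la$ yields \eqref{4.7} exactly as you intend. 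Everything else in your argument --- the realness of $\Delta$ and hence of the determinant on $\bbR$, its nonvanishing on $(-\infty,e_0)$, and the identification of the sign with $\eta(\te_0,\te_R)$ from \eqref{2.46} taken at $z=\la\to-\infty$ --- is sound and agrees with the paper's proof.
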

\begin{proof}
We recall  the definition of
$e_0 =\inf\big(\sigma(\Hte) \cup \sigma( H_{\theta_0',\theta_R'})\big)$ in \eqref{4.5}.

Combining \eqref{4.3} and \eqref{4.6} one obtains
\begin{align}
\begin{split}
\f{d}{dz} \ln\Big(\eta(\te_0,\te_R) \, {\det}_{\bbC^2}\Big(\Lates (z)\Big)\Big)
= \int_{[e_0,\infty)}
\f{\xi(\lambda; H_{\theta_0',\theta_R'}, \Hte) \, d\lambda}{(\lambda - z)^2},&  \\
z\in\rho(\Hte)\cap \rho(H_{\theta_0',\theta_R'})\big),&      \lb{4.9}
\end{split}
\end{align}
since $\eta(\te_0,\te_R)$ is $z$-independent.

Next, combining \eqref{2.2d} and \eqref{2.43}, and using the fact that $\phi(z,x)$
and $\theta(z,x)$ are both real-valued for $z, x \in \bbR$, one concludes that
$\Delta(z,R,\te_0,\te_R)$, and hence ${\det}_{\bbC^2}\Big(\Lates (z)\Big)$ are
real-valued for $z\in\bbR$ and $\te_0, \te_R, \te_0', \te_R' \in [0,\pi)$. Moreover,
using the fact that
\begin{equation}
{\det}_{\bbC^2}\Big(\Lates (z)\Big) \neq 0, \quad z < e_0,    \lb{4.10}
\end{equation}
and invoking the asymptotic behavior \eqref{2.46} as $z\downarrow 0$, one actually concludes that
\begin{equation}
{\det}_{\bbC^2}\Big(\eta(\te_0,\te_R) \, \Lates (z)\Big) > 0, \quad z < e_0.  \lb{4.11}
\end{equation}

Integrating \eqref{4.9} with respect to the $z$-variable along the real axis from
$z_0$ to $z$, assuming $z<z_0<e_0$, one obtains
\begin{align}
& \ln\Big(\eta(\te_0,\te_R) \, {\det}_{\bbC^2}\Big(\Lates (z)\Big)\Big) -
\ln\Big(\eta(\te_0,\te_R) \, {\det}_{\bbC^2}\Big(\Lates (z_0)\Big)\Big)   \no \\
& \quad = \int_{z_0}^z d\zeta
\int_{[e_0,\infty)} \f{\xi(\lambda; H_{\theta_0',\theta_R'}, \Hte) \, d\lambda}
{(\lambda - \zeta)^2}  \no \\
& \quad = \int_{z_0}^z d\zeta
\int_{[e_0,\infty)} \f{[\xi_+ (\lambda; H_{\theta_0',\theta_R'}, \Hte)
- \xi_- (\lambda; H_{\theta_0',\theta_R'}, \Hte)] \, d\lambda}
{(\lambda - \zeta)^2}   \no \\
& \quad = \int_{[e_0,\infty)} [\xi_+ (\lambda; H_{\theta_0',\theta_R'}, \Hte)
- \xi_- (\lambda; H_{\theta_0',\theta_R'}, \Hte)] \, d\lambda
\int_{z_0}^z \f{d\zeta}{(\lambda - \zeta)^2}   \no \\
& \quad = \int_{[e_0,\infty)} \xi(\lambda; H_{\theta_0',\theta_R'}, \Hte) \, d\lambda
\bigg(\f{1}{\lambda - z} - \f{1}{\lambda - z_0}\bigg), \quad z<z_0<e_0.   \lb{4.12}
\end{align}
Here we split $\xi$ into its positive and negative parts, $\xi_{\pm} = [|\xi| \pm \xi]/2$,
and applied the Fubini--Tonelli theorem to interchange the integrations with respect
to $\lambda$ and $\zeta$. Moreover, we chose the branch of $\ln(\cdot)$ such
that $\ln(x) \in \bbR$ for $x>0$, compatible with the normalization of
$\xi(\, \cdot \,; H_{\theta_0',\theta_R'}, \Hte)$ in \eqref{4.5}.

An analytic continuation of the first and last line of \eqref{4.12} with respect to $z$ then
yields
\begin{align}
& \ln\Big(\eta(\te_0,\te_R) \, {\det}_{\bbC^2}\Big(\Lates (z)\Big)\Big) -
\ln\Big(\eta(\te_0,\te_R) \, {\det}_{\bbC^2}\Big(\Lates (z_0)\Big)\Big)   \no \\
& \quad = \int_{[e_0,\infty)} \xi(\lambda; H_{\theta_0',\theta_R'}, \Hte) \, d\lambda
\bigg(\f{1}{\lambda - z} - \f{1}{\lambda - z_0}\bigg),
\quad z \in \bbC \backslash [e_0,\infty).   \lb{4.13}
\end{align}

 Since by \eqref{4.11},
\begin{equation}
\ln\Big(\eta(\te_0,\te_R) \, {\det}_{\bbC^2}\Big(\Lates (z_0)\Big)\Big) \in \bbR,
\quad z_0 < e_0,     \lb{4.14}
\end{equation}
the Stieltjes inversion formula separately applied to the absolutely continuous
measures $\xi_{\pm} (\lambda; H_{\theta_0',\theta_R'}, \Hte) \, d\lambda$
(cf., e.g., \cite[p.\ 328]{AD56}, \cite[App.\ B]{We80}), then yields \eqref{4.7}.
\end{proof}

\medskip

\noindent {\bf Acknowledgments.} 
We are indebted to Steve Clark, Steve Hofmann, Alan McIntosh, and 
Marius Mitrea for helpful discussions. 
   
Fritz Gesztesy gratefully acknowledges the
kind invitation and hospitality of the Department of Mathematics of
the Western Michigan University, Kalamazoo, during a week in April of 2010, where the early parts of this paper were developed.


\end{document}